\newcommand{\be}{\begin{equation}}
\newcommand{\ee}{\end{equation}}
\newcommand{\beqa}{\begin{eqnarray*}}
\newcommand{\eeqa}{\end{eqnarray*}}
\newcommand{\beqn}{\begin{eqnarray}}
\newcommand{\eeqn}{\end{eqnarray}}
\newcommand{\ba}{\begin{array}}
\newcommand{\ea}{\end{array}}
\newcommand{\bc}{\begin{center}}
\newcommand{\ec}{\end{center}}
\newcommand{\btab}{\begin{tabular}}
\newcommand{\etab}{\end{tabular}}
\newcommand{\mb}{\makebox}
\newcommand{\bm}[1]{\mb{\boldmath ${#1}$}}
\newcommand{\bb}[1]{\mathbb{#1}}
\newcommand{\Ind}{1\!\mathrm{l}}
\renewcommand{\P}{{\rm P}}
\newcommand{\E}{{\rm E}}
\newcommand{\nin}{\not\in}
\newcommand{\bs}{\boldsymbol}
\newcommand{\bsxi}{\bs{\xi}}
\newcommand{\GP}{\mathrm{GP}}
\newcommand{\sphere}{\mathbb{S}^{d - 1}}
\newcommand{\scale}{a}
\newcommand{\comment}{}
\newcommand{\commentA}{}
\numberwithin{equation}{section}
\theoremstyle{plain}
\newtheorem{theorem}{Theorem}[section]
\newtheorem{lemma}[theorem]{Lemma}
\newtheorem{proposition}[theorem]{Proposition}
\newtheorem{example}[theorem]{Example}
\theoremstyle{remark}
\newtheorem{remark}[theorem]{Remark}
\begin{document} \sloppy

\begin{frontmatter}
\title{Bayesian Detection of Image Boundaries 
}
\runtitle{Bayesian Detection of Image Boundaries}

\begin{aug}
\author{\fnms{Meng} \snm{Li} \thanksref{grant} \ead[label=e1]{ml371@stat.duke.edu}}
\and 
\author{\fnms{Subhashis} \snm{Ghosal}\thanksref{grant}\ead[label=e2]{ sghosal@stat.ncsu.edu}}

\thankstext{grant}{Research partially supported by NSF grant number DMS-1106570.}
\runauthor{Li, M. and Ghosal, S.}

\affiliation{Duke University and North Carolina State University}

\address{Department of Statistical Science\\
Duke University\\
Durham, North Carolina 27708-0251\\
USA \\
\printead{e1}\\
\phantom{E-mail:\ }}

\address{Department of Statistics\\
North Carolina State University \\
Raleigh, North Carolina 27695-8203 \\
USA \\
\printead{e2}\\
\phantom{E-mail:\ }}

\end{aug}

\begin{abstract}
Detecting boundary of an image based on noisy observations is a fundamental problem of image processing and image segmentation. For a $d$-dimensional image ($d = 2, 3, \ldots$), the boundary can often be described by a closed smooth $(d - 1)$-dimensional manifold. In this paper, we propose a nonparametric Bayesian approach based on priors indexed by $\mathbb{S}^{d - 1}$, the unit sphere in $\mathbb{R}^d$. We derive optimal posterior contraction rates for Gaussian processes or finite random series priors using basis functions such as trigonometric polynomials for 2-dimensional images and spherical harmonics for 3-dimensional images. For 2-dimensional images, we show a rescaled squared exponential Gaussian process on $\mathbb{S}^1$ achieves four goals of guaranteed geometric restriction, (nearly) minimax optimal rate adapting to the smoothness level, convenience for joint inference and computational efficiency. We conduct an extensive study of its reproducing kernel Hilbert space, which may be of interest by its own and can also be used in other contexts. Several new estimates on the modified Bessel functions of the first kind are given. Simulations confirm excellent performance and robustness of the proposed method. 
\end{abstract}

\begin{keyword}[class=MSC]
\kwd[Primary ]{62G20} 
\kwd{62H35} 
\kwd[; secondary ]{62F15} 
\kwd{60G15} 
\end{keyword}

\begin{keyword}
\kwd{Boundary detection}
\kwd{Gaussian process on sphere}
\kwd{image}
\kwd{posterior contraction rate}
\kwd{random series}
\kwd{squared exponential periodic kernel}
\kwd{Bayesian adaptation.}
\end{keyword}

\end{frontmatter}

\section{Introduction}
\label{sec:intro}
The problem of detecting boundaries of image arise in a variety of areas including epidemiology~\citep{Waller+Gotway:04}, geology~\citep{Lu+Car:05}, ecology~\citep{Fit+:10}, forestry, marine science. A general $d$-dimensional $(d \geq 2)$ image can be described as $(X_{i}, Y_{i})_{i = 1}^{n}$ , where $X_i \in T = [0, 1]^d$ is the location of the $i$th observation and $Y_i$ is the corresponding pixel intensity. Let $f(\cdot; \phi)$ be a given regular parametric family of densities with respect to a $\sigma$-finite measure $\nu$, indexed by a $p$-dimensional parameter $\phi \in \Theta$, then we assume that there is a closed region $\Gamma \subset T$ such that
\begin{equation}
\label{eq:model}
Y_{i} \sim \left\{
\begin{array}{l l}
f(\cdot; {\xi}) & \text{if } X_{i} \in \Gamma; \\
f(\cdot; {\rho}) & \text{if } X_{i} \in \Gamma^c,
\end{array} \right.
\end{equation}
where $\xi, \rho$ are distinct but unknown parameters. We assume that both $\Gamma$ and $\Gamma^c$ have nonzero Lebesgue measures. The goal here is to recover the boundary $\gamma = \partial \Gamma$ from the noisy image where $\gamma$ is assumed to be a smooth $(d-1)$-dimensional manifold without boundary, and derive the contraction rate of $\gamma$ at a given true value $\gamma_0$ in terms of the metric defined by the Lebesgue measure of the symmetric difference between the regions enclosed by $\gamma$ and $\gamma_0$. 
When the boundary itself is of interest such as in image segmentation, we  can view the problem as a generalization of the change-point problem in one-dimensional data to images.


A significant part of the literature focuses on the detection of boundary pixels, based on either first-order or second-order derivatives of the underlying intensity function~\cite[Ch. 6]{Qiu:05} or Markov random fields~\citep{Geman+Geman:84,Geman+Geman:93}, resulting in various edge detectors or filters. This approach is especially popular in computer vision~\citep{Basu:02, Bha+Mit:12}. However, the detected boundary pixels are scattered all over the image and do not necessarily lead to a closed region, and hence cannot be directly used for image segmentation. A post-smoothing step can be applied, such as Fourier basis expansion, principal  curves~\citep{Hastie+Stu:89} or a Bayesian multiscale method proposed by~\cite{Gu+:14}. However the ad-hoc two-step approach makes the theoretical study of convergence intractable. In addition, as pointed out by~\cite{Banerjee+Gelfand:06}, many applications produce data at irregular spatial locations and do not have natural neighborhoods.

Most existing methods are based on local smoothing techniques~\citep{Car+Kri:92, Rud+Str:94b, Hall+:01, Pol+Spo:03, Qiu+Sun:07}, which lead to convenient study of theoretical properties benefiting from well established results. However, local methods suffer when the data is sparse and thus the usage of the global information becomes critical. More importantly, it often leads to {\it local} (or pointwise) inference such as marginal confidence bands losing the joint information.

A relevant and intensively studied problem is to estimate the underlying intensity function $\E(Y|X)$ with discontinuity at the boundary~\citep{Muller+Song:94,Qiu+Yan:97, Donoho:99, Hall+:01,Qiu:07,Qiu+Sun:09}. These two problems are different for at least two reasons. Firstly, there are many important applications where $\xi$ and $\rho$ affect $f(\cdot)$ not (or not only) in the mean but some other characteristics such as variance~\citep{Car+Kri:92}. Secondly, the reconstruction of $\E(Y|X)$ is essentially a curve (or surface) fitting problem with discontinuity and the corresponding asymptotics are mostly on the entire intensity function rather than the boundary itself. Therefore, we may refer the latter as image denoising when boundaries are present, not necessarily guaranteeing the geometric restrictions on the boundary such as closedness and smoothness.

In this paper,  we propose a nonparametric Bayesian method tailored to detect the boundary $\gamma_0$, which is viewed as a closed smooth $(d - 1)$-dimensional manifold without boundary.  This paper has three main contributions. 

The first main contribution is that the proposed method is, to our best knowledge, the first one in the literature that achieves all the following four goals (i)--(iv) when estimating the boundary. 
\begin{enumerate}[(i).]
\item Guaranteed geometric restrictions on the boundary such as closedness and smoothness.
\item Convergence at the (nearly) minimax rate~\citep{Kor+Tsy:93, Mammen+Tsybakov:95}, adaptively to the smoothness of the boundary.
\item Possibility and convenience of joint inference.
\item Computationally efficient algorithm.
\end{enumerate}
To address (i) and (iii), the Bayesian framework has its inherent advantages. For (i), we note that Bayesian methods allow us to put the restrictions on the boundary conveniently via a prior distribution. Specifically, we propose to use a Gaussian process (GP) prior indexed by the unit sphere in $\mathbb{R}^d$, i.e. the $(d-1)$-sphere $\mathbb{S}^{d - 1} = \{x = (x_1, \ldots, x_d) \in \mathbb{R}^d: x_1^2 + \cdots + x_d^2 = 1 \},$
 or a random series prior on $\sphere$. For (iii), Bayesian methods allow for joint inference since we draw samples from the joint posterior distributions, as demonstrated by the numerical results in Section~\ref{section:simulation}.  The proposed method achieves the (nearly) minimax optimal rate adapting to the unknown smoothness level based on a random rescaling incorporated by a hierarchical prior~\citep{van+van:09,Shen+Ghosal:14}. Furthermore, Goal (ii) is achieved for any regular family of noise and general dimensions. In contrast, for instance, \comment{the method in~\citep{Mammen+Tsybakov:95} is presented only for binary images and does not adapt to the unknown smoothness level. }
 Although the quantification of uncertainty and adaptivity of a method is appealing, the computation in goal (iv) is important when implementing it. Many adaptive methods are hard to implement since inverses of covariance matrices need to be calculated repeatedly. In the proposed Bayesian approach, an efficient Markov chain Monte Carlo (MCMC) sampling is designed based on the analytical eigen decomposition of the squared exponential periodic (SEP) kernel (see Section~\ref{section:bd.sampling}), for various noise distributions. 
In addition, we conduct extensive numerical studies to confirm the good performance of the proposed method and indicate that it is robust under model misspecification.

As the second main contribution, we conduct an extensive study on the reproducing kernel Hilbert space (RKHS) of the SEP Gaussian process, which is essential to obtain the optimal rate and adaptation in Goal (ii). For the most important case in applications $d = 2$, by a simple mapping, the squared exponential (SE) Gaussian process on $\mathbb{S}^1$ is equivalent to the SEP Gaussian process on $[0,1]$ since their RKHS's are isometric (see Lemma~\ref{lemma:bd.isometric}). Recently developed theory of posterior contraction rates implies that nonparametric Bayesian procedures can automatically adapt to the unknown smoothness level using a rescaling  factor via a hyperparameter in a stationary Gaussian processes on [0, 1] or $[0, 1]^d$~\citep{van+van:07,van+van:09}. Rescaled SE Gaussian process is one popular example of this kind. In contrast, the literature lacks results on the rescaling scheme and the resulting properties of  the SEP Gaussian process, even though it has been implemented in many applications~\cite{MacKay:98}. It may due to the apparent similarity shared between the SEP Gaussian process and the SE Gaussian process.  
However, these two processes have fundamental differences because the rescaling of the argument on $\mathbb{S}^1$ cannot be transformed as a rescaling of the mapped argument on the Euclidean domain. In addition, the spectral measure of the SEP Gaussian process is discrete (see Lemma~\ref{lemma:spectral.measure}) thus lacking the absolute continuity of that of the SE Gaussian process which is critical in establishing many of its properties~\citep{van+van:09}. As a result, the RKHS of the SEP Gaussian process for different scales do not follow the usual nesting property. We overcome these issues by using the special eigen structure of the SEP kernel and intriguing properties of the modified Bessel functions of the first kind. Some of the properties of the SE Gaussian process still hold, however, the proofs are remarkably different. 
Nevertheless, we show that the posterior contraction rate of the boundary by using the SEP Gaussian process is nearly minimax-optimal, which is $n^{-\alpha/(\alpha + 1)}$ up to a logarithmic factor, adaptively to the smoothness level $\alpha$ of the boundary. 
Section~\ref{sec:rkhs.GP} establishes a list of properties on the RKHS of the SEP Gaussian process, along with the contraction rate calculation and adaptation.

The third main contribution is that we provide some new estimates on Bessel functions, which are critical when establishing properties on the RKHS of the SEP Gaussian process. Similar to the second main contribution, these new estimates may be of interest by their own and are useful in broader contexts such as function estimation on spheres in addition to the boundary detection problem discussed here. 

In addition to establishing key theoretical properties, we also develop an efficient MCMC method for sampling posterior distribution based on a SEP Gaussian process prior using the explicit eigen structure of the SEP Gaussian process obtained in this paper \comment{(taking $O(n)$ time in each MCMC run)}. The algorithm is generic and hence can be used for posterior computation in other curve estimation problems on the circle such as directional data analysis using the SEP Gaussian process prior.


The paper is organized as follows. \comment{Section~\ref{sec:model} introduces the model and notations. }The general results on the posterior contraction rate are given in Section~\ref{section:main},  along with examples of priors and posterior rate calculation including a finite random series prior (for $d = 2$ and 3) and the squared exponential Gaussian process prior on $\mathbb{S}^1$ (for $d = 2$).
In Section~\ref{sec:rkhs.GP}, we study the corresponding RKHS of a squared exponential Gaussian process prior on $\mathbb{S}^1$, or equivalently, a squared exponential periodic Gaussian process on $[0,1]$, heavily relying on the properties of modified Bessel functions of the first kind. Section~\ref{section:bd.sampling} proposes an efficient Markov Chain Monte Carlo methods for computing the posterior distribution of the boundary using a randomly rescaled Gaussian process prior, for various noise distributions. Section~\ref{section:simulation} studies the performance of the proposed Bayesian estimator via simulations, under various settings for both binary images and Gaussian noised images. Section~\ref{sec:ch3.proof} contains proofs to all theorems and lemmas. Section~\ref{section:bessel} provides several results on the modified Bessel functions of the first kind.







\section{Model and Notations}
\label{sec:model}
We consider a $d$-dimensional image $(X_{i}, Y_{i})_{i = 1}^n$ for $d = 2, 3, \ldots$, where $X_i$ is the location of the
$i$th observation and $Y_i$ is the image intensity. We consider the
locations within a $d$-dimensional \comment{fixed size hypercube, and we specifically use unit hypercube $T = [-1/2, 1/2]^d$ without loss of generality}. Depending on the scheme of collecting data, we have the following options for the distribution $P_{X_i}$ of $X_{i}$:

\begin{itemize}
\item {\it Completely Random Design.} $X_{i}
  \overset{i.i.d.}{\sim} \mathrm{Uniform}(T).$
\item {\it Jitteredly Random Design.} Let $T_i$ be the $i$th \comment{block when partitioning $T$ into equal-spaced grids}. Then $X_{i}$ is chosen randomly at $T_i$, i.e. $X_{i} \sim \mathrm{Uniform}(T_{i})$ independently.
\end{itemize}


\comment{
The region $\Gamma$ is assumed to be star-shaped with a known reference point $O \in \Gamma$, namely, for any point in $\Gamma$ the line segment from $O$ to that point is in $\Gamma$; see~\citep{Donoho:99} and~\cite[ Ch5]{Kor+Tsy:93}. If the image is start-shaped, this assumption is mild since in general a reference point can be easily detected by a preliminary estimator of the boundary or it could be directly given in many cases according to some subject matter knowledge. Images of more general shape can possibly be addressed by and ad-hoc ``divide and conquer'' strategy. 
The boundary $\gamma = \partial \Gamma$ is a $(d - 1)$-dimensional closed manifold.}
In view of a converse of the Jordan curve theorem we represent the closed boundary $\gamma$ as a function indexed by $\sphere,$ i.e. $\gamma: \sphere \rightarrow \mathbb{R}^+: s \rightarrow \gamma(s)$.  We further assume that the boundary $\gamma$ is $\alpha$-smooth, i.e. $\gamma \in \bb{C}^\alpha(\mathbb{S}^{d-1})$, where $\bb{C}^\alpha(\mathbb{S}^{d-1})$ is the $\alpha$-H\"{o}lder class on $\sphere$. Specifically, let $\alpha_0$ be the largest integer strictly smaller than $\alpha$, then
\begin{equation}
\begin{split}
\bb{C}^{\alpha}(\sphere) = \{ f: \sphere \rightarrow \mathbb{R}^+, |f^{(\alpha_0)}(x) & -  f^{(\alpha_0)}(y)| \leq L_f \|x - y\|^{\alpha - \alpha_0} \\ & \text{ for } \forall x, y \in \sphere \text{ and some } L_f > 0 \},
\end{split}
\end{equation}
where $\|\cdot\|$ is the Euclidean distance.
A different definition of smoothness was used by~\cite{Mammen+Tsybakov:95} based on the class of sets in~\cite{Dudley:74}, which cover cases of unsmooth boundary but with smooth parameterization. Here we focus on the class of smooth boundary therefore it may be more natural to use the definition of $\bb{C}^{\alpha}(\sphere)$ directly. It may be noted that in our set-up, the boundary is not affected by reparameterization. 


We use $\theta$ to denote the triplet $(\xi, \rho, \gamma)$.  Let $\phi_i$ be the parameters at the $i$th location, i.e. $ \phi_i = \xi \Ind(X_{i}\in \Gamma) + \rho \Ind(X_{i} \in \Gamma^c)$ where $\Ind(\cdot)$ is the indicator function. The model assumes that $Y | X \sim P_{\theta}^n$ for some
$\theta$, where $P_{\theta}^n$ has density $\prod_{i = 1}^n p_{\theta, i}(Y_i) = \prod_{i = 1}^n f(Y_i; \phi_i)$ with respect to $\nu^n$. Let 
\begin{equation}
  \label{eq:2}
  d_n^2(\theta, \theta') = \frac{1}{n} \sum_{i} \int
  (\sqrt{p_{\theta,i}} - \sqrt{p_{\theta',i}})^2 d \nu
\end{equation}
be the average of the squares of the Hellinger distance for the
distributions of the individual observations.  Let $K(f, g) = \int f \log (f/g) d \nu$, $V(f, g) = \int f |\log (f/g)|^2 d \nu,$ and $\|\cdot\|_p$ denote the $L_p$-norm ($1 \leq p \leq \infty$).
We use $f \lesssim g$ if there is an universal constant $C$ such that $f \lesssim Cg$, and $f \asymp g$ if $f \lesssim g \lesssim f$. For a vector $x \in \mathbb{R}^d$, define $\|x\|_p = \{\sum_{i}^{d} |x_i|^p\}^{1/p}$ and $\|x\|_{\infty} = \max_{1 \leq i \leq p} |x_i|$.
For two sets $\Gamma$ and $\Gamma'$, we use $\Gamma \bigtriangleup \Gamma'$ for their symmetric difference and $\lambda(\Gamma \bigtriangleup \Gamma')$ for its corresponding Lebesgue measure. We also use $\lambda(\gamma, \gamma')$ for $\lambda(\Gamma \bigtriangleup \Gamma')$ when $\gamma = \partial \Gamma$ and $\gamma' = \partial \Gamma'$. 

\section{Posterior convergence}
\label{section:main}

In the following sections, we shall focus on the jitteredly random design; the completely random design is more straightforward and follows the same rate calculation with minor modifications. 

\subsection{General theorem}
\label{section:assumption}
The likelihood function is given by
\begin{equation}
\label{eq:likelihood.general}
L(Y | X, \theta) = \prod_{i \in I_1} f(Y_i; \xi) \prod_{i \in I_2} f(Y_i; \rho),
\end{equation}
where $I_1 = \{i: X_i \in \Gamma\}$ and $I_2 = \{i: X_i \in \Gamma^c\}$. The parameters $(\xi, \rho) \in \Theta^*$, where $\Theta^*$ is a subset of $\Theta \times \Theta = \{(\xi, \rho): \xi \in \Theta, \rho \in \Theta \}$. The set $\Theta^*$ is typically given as the full parameter space $\Theta \times \Theta$ with some order restriction between $\xi$ and $\rho$. For instance, when $f(\cdot)$ is the Bernoulli distribution, then $\Theta^* = \{(\xi, \rho) \in \mathbb{R}^2: 0 < \rho < \xi < 1\}$ if the inside probability $\xi$ is believed to be larger than the outside probability. We assume that the distribution $f(\cdot)$ has the following regularity conditions:
\begin{itemize}
\item[(A1).] For fixed $\phi_0$,  we have $K(f(\cdot; \phi_0), f(\cdot; \phi)) \lesssim \|\phi - \phi_0\|^2$ and $V(f(\cdot; \phi_0), f(\cdot; \phi)) \lesssim \|\phi - \phi_0\|^2$ as $\|\phi - \phi_0\|^2 \rightarrow 0$;
\item[(A2).] \comment{There exist constants $C_0,b_0>0$ such that for $\phi_1,\phi_2$ with $\|\phi_1\|,\|\phi_2\|\le M$, we have 
$h^2(f(\cdot; \phi_1), f(\cdot; \phi_2)) \le C_0(1+M^{b_0})\|\phi - \phi_0\|^2$, where $h(f(\cdot; \phi), f(\cdot; \phi'))$ is the Hellinger distance between the two densities $f(\cdot; \phi)$ and $f(\cdot; \phi')$.}
\end{itemize}

\commentA{Assumptions (A1) and (A2) relate the divergence and distances between two distributions to the Euclidean distance between the corresponding parameters. Most common distributions where the parameters are bounded away from the boundary of their supports satisfy these two assumptions, particularly including all the distribution families discussed in the paper.   }

The observations $Y_{i}$'s are conditionally independent given parameters. In the following sections, we let $\theta_0$ denote the true value of the  parameter vector $(\xi_0, \rho_0, \gamma_0)$ generating the data, and the corresponding region with boundary $\gamma_0$ is denoted by $\Gamma_0$.

We shall denote the prior on $\theta$ by $\Pi$. By a slight abuse of notations, we denote the priors on $(\xi, \rho)$ and $\gamma$ also by $\Pi$.
We next present the abstract forms of the required prior distributions in order to satisfy the minimax-optimal posterior contraction rate later on.
The prior on $(\xi, \rho)$ is independent with the prior on $\gamma$ and satisfies that
\begin{itemize}
\item[(B1).] $\Pi(\xi, \rho)$ has a positive and continuous density on $\Theta^*$;
\item[(B2).] Sub-polynomial tails: there are some constants $t_1, t_2 > 0$ such that for any $M > 0,$ we have $\Pi(\xi: \xi \notin [-M, M]^p) \leq t_1 M^{- t_2}$ and  $\Pi(\rho: \rho \notin [-M, M]^p) \leq t_1 M^{- t_2}$.
\end{itemize}

The estimation and inference on $\gamma$ is of main interest. Therefore $(\xi, \rho)$ are considered as two nuisance parameters. When $\gamma$ is modeled nonparametrically, the contraction rate for $\theta$ is primarily influenced by $\gamma$. The following condition is critical to relate $d_n(\theta, \theta')$ to $\lambda(\gamma, \gamma')$, which will lead to the contraction rate for $\gamma$.
\begin{itemize}
\item[(C).] For given $(\xi_0, \rho_0) \in \Theta^*$, there exists a positive constant $c_{0,n}$ such that for arbitrary $(\xi, \rho) \in \Theta^*$,
$h(f(\cdot; \xi_0), f(\cdot; \rho)) + h(f(\cdot; \rho_0), f(\cdot; \xi)) \geq c_{0,n} > 0.$

\end{itemize}

\comment{Above we allow the constant $c_{0,n}$, which is usually $h(\xi_0,\rho_0)$, to depend on $n$ if we consider a sequence of true values $(\xi_0,\rho_0)$. Assumption (C) can be interpreted as quantifying the separation of the inside and outside densities in terms of the Hellinger distance. The separation becoming smaller with $n$ indicates the increasing level of difficulty of the problem. 
Assumption (C) holds for most commonly used distribution families $\{f(\cdot; \phi); \phi \in \Theta\}$ when $\Theta^*$ considers the order restriction between $\xi$ and $\rho$. Examples include but are not limited to:
\begin{itemize}
\item One-parameter family such as Bernoulli, Poisson, exponential distributions, and  $\Theta^* = \Theta^2 \cap \{(\xi, \rho): \rho < \xi \}$, or $\Theta^* = \Theta^2 \cap \{(\xi, \rho): \rho > \xi \}$.
\item Two-parameter family such as Gaussian distributions, and $\Theta^* = \Theta^2 \cap \{((\mu_1, \sigma_1), (\mu_2, \sigma_2)): \mu_1 > \mu_2, \sigma_1 = \sigma_2 \}$, or $\Theta^* = \Theta^2 \cap \{((\mu_1, \sigma_1), (\mu_2, \sigma_2)): \mu_1 > \mu_2, \sigma_1 > \sigma_2 \}$, or $\Theta^* = \Theta^2 \cap \{((\mu_1, \sigma_1), (\mu_2, \sigma_2)): \mu_1 = \mu_2, \sigma_1 > \sigma_2 \}$.
\end{itemize}
The assertions above can be verified by noting that, by keeping one argument fixed, the Hellinger distance increase in the other argument in each direction as that moves away from the fixed value in terms of the Euclidean distance.}
In practice, the order restriction is often naturally obtained depending on the concrete problems. For instance, in brain oncology, a tumor often has higher intensity values than its surroundings in a positron emission tomography scan, while for astronomical applications objects of interest emit light and will be brighter. In this paper, we use the abstract condition (C) to provide a general framework for various relevant applications.

Throughout this paper, we shall use $h(\phi, \phi')$ to abbreviate $h(f(\cdot; \phi), f(\cdot; \phi'))$.
The following general theorem gives a posterior contraction rate for parameters $\theta$ and $\gamma$. 

\begin{theorem}
\label{th:rate}
Let a sequence $\epsilon_n \rightarrow 0$ be such that $n \epsilon_n^2 / \log n$ is bounded away from $0$. Under Conditions (A1), (A2), (B1), (B2), if there exists Borel measurable subsets $\Sigma_n \subset \bb{C}^{\alpha}(\sphere)$ with $\sigma_n = \sup\{\|\gamma\|_{\infty}: \gamma \in \Sigma_n\}$ such that
\begin{align}
\label{eq:priormass}
-\log \Pi(\gamma: \lambda( \Gamma_0
\bigtriangleup \Gamma) \leq \epsilon_n^2) & \lesssim  n \epsilon_n^2, \\
 \label{eq:sieve}
-\log \Pi(\gamma \in \Sigma_n^c) & \gtrsim  n\epsilon_n^2, \\
\label{eq:entropy}
\log N(\epsilon_n^2/\sigma_n^{d - 1}, \Sigma_n, \|\cdot\|_{\infty}) & \lesssim n \epsilon_n^2,
\end{align}
then for the entire parameter $\theta = (\xi, \rho, \gamma)$, we have that for every $M_n \rightarrow \infty$,
\begin{equation}
\label{eq:theta.rate}
\P_{\theta_0}^{(n)} \Pi(\theta: d_n(\theta, \theta_0) \geq M_n \epsilon_n | X^{(n)},Y^{(n)}) \rightarrow 0.
\end{equation}
Further, \comment{if also Condition (C) holds}, then for the boundary $\gamma$, we have that for every $M_n \rightarrow \infty$,
\begin{equation}
\label{eq:gamma.rate}
\P_{\theta_0}^{(n)} \Pi(\gamma: \lambda(\gamma, \gamma_0) \geq M_n \epsilon_n^2/c_{0,n}^2 | X^{(n)}, Y^{(n)}) \rightarrow 0.
\end{equation}
\end{theorem}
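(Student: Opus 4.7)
The plan is to invoke the general posterior contraction theorem for independent but non-identically distributed observations (e.g., Theorem 4 of Ghosal and van der Vaart 2007) relative to the averaged Hellinger metric $d_n$. That theorem requires three ingredients: a lower bound of order $e^{-cn\epsilon_n^2}$ on the prior mass of a Kullback--Leibler-type neighborhood of $\theta_0$; an exponentially small prior mass outside a sieve $\Theta_n$; and a $d_n$-entropy bound $\log N(\epsilon_n,\Theta_n,d_n)\lesssim n\epsilon_n^2$. Once (\ref{eq:theta.rate}) is in place, the refinement (\ref{eq:gamma.rate}) follows by converting $d_n$-closeness to $\lambda$-closeness via Condition~(C).

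\textbf{Verifying the three conditions.} For the prior mass, introduce the event $B_n=\{\|\xi-\xi_0\|\vee\|\rho-\rho_0\|\le \epsilon_n\}\cap\{\lambda(\Gamma\bigtriangleup\Gamma_0)\le \epsilon_n^2\}$ and bound $\frac{1}{n}\sum_i K(p_{\theta_0,i},p_{\theta,i})$ and its $V$-analogue by splitting the sum according to whether $X_i$ lies in $\Gamma\cap\Gamma_0$, in $\Gamma^c\cap\Gamma_0^c$, or in the symmetric difference. On the first two pieces the summand is $\lesssim\|\xi-\xi_0\|^2$ or $\|\rho-\rho_0\|^2$ by (A1); on the third piece each summand is bounded by a fixed constant (the parameters stay in a compact neighborhood of $(\xi_0,\rho_0)$), so this contribution is at most $(N_\Delta/n)\cdot C\lesssim \lambda(\Gamma\bigtriangleup\Gamma_0)\le\epsilon_n^2$ on a high-probability $X$-event, where $N_\Delta=\#\{i:X_i\in\Gamma\bigtriangleup\Gamma_0\}$ concentrates around $n\lambda(\Gamma\bigtriangleup\Gamma_0)$ for the jittered design. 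By independence of $\Pi_{\xi,\rho}$ and $\Pi_\gamma$, (B1), and (\ref{eq:priormass}), one obtains $-\log\Pi(B_n)\lesssim n\epsilon_n^2$. For the sieve, take $\Theta_n=\{(\xi,\rho):\|\xi\|\vee\|\rho\|\le M_n\}\times\Sigma_n$ with $\log M_n$ chosen from (B2) so that $\Pi(\Theta_n^c)\le e^{-Cn\epsilon_n^2}$. For the entropy, the geometric fact $\lambda(\Gamma\bigtriangleup\Gamma')\lesssim\sigma_n^{d-1}\|\gamma-\gamma'\|_\infty$ for $\gamma,\gamma'\in\Sigma_n$ combined with (A2) gives
\begin{equation*}
d_n^2\bigl((\xi,\rho,\gamma),(\xi',\rho',\gamma')\bigr)\lesssim (1+M_n^{b_0})\bigl(\|\xi-\xi'\|^2+\|\rho-\rho'\|^2\bigr)+\sigma_n^{d-1}\|\gamma-\gamma'\|_\infty.
\end{equation*}
Covering $[-M_n,M_n]^{2p}$ by a Euclidean net and invoking (\ref{eq:entropy}) at resolution $\epsilon_n^2/\sigma_n^{d-1}$ yields $\log N(\epsilon_n,\Theta_n,d_n)\lesssim n\epsilon_n^2$. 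The general theorem then delivers (\ref{eq:theta.rate}).

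\textbf{From $d_n$ to $\lambda$.} To obtain (\ref{eq:gamma.rate}), note that $p_{\theta,i}$ equals $f(\cdot;\xi)$ or $f(\cdot;\rho)$ depending on whether $X_i\in\Gamma$, and analogously for $p_{\theta_0,i}$. Thus every index with $X_i\in\Gamma\bigtriangleup\Gamma_0$ contributes to $d_n^2(\theta,\theta_0)$ either $h^2(\xi,\rho_0)$ (when $X_i\in\Gamma\cap\Gamma_0^c$) or $h^2(\rho,\xi_0)$ (when $X_i\in\Gamma^c\cap\Gamma_0$). Using $(a+b)^2\le 2(a^2+b^2)$, Condition (C) forces $h^2(\xi,\rho_0)+h^2(\rho,\xi_0)\ge c_{0,n}^2/2$, so discarding all other summands yields
\begin{equation*}
d_n^2(\theta,\theta_0)\ \gtrsim\ c_{0,n}^2\,\frac{N_\Delta}{n}.
\end{equation*}
A Bernstein inequality for the jittered design then gives $N_\Delta\ge n\lambda(\gamma,\gamma_0)/2$ whenever $\lambda(\gamma,\gamma_0)\gtrsim M_n\epsilon_n^2/c_{0,n}^2$, uniformly over the $\epsilon_n^2/\sigma_n^{d-1}$-net from the entropy step (the log-cardinality of this net, $\lesssim n\epsilon_n^2$, is absorbed by the Bernstein exponent since $nM_n\epsilon_n^2/c_{0,n}^2\gg n\epsilon_n^2$). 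Hence $\lambda(\gamma,\gamma_0)\lesssim d_n^2(\theta,\theta_0)/c_{0,n}^2$ with $\P_{\theta_0}^{(n)}$-probability tending to one, and (\ref{eq:gamma.rate}) follows from (\ref{eq:theta.rate}) after a harmless relabeling of the diverging sequence $M_n$. The main technical obstacle is this last uniform lower bound on $N_\Delta$ over the nonparametric class $\Sigma_n$, which requires chaining the Bernstein estimate along the entropy net; the remaining steps are essentially bookkeeping around the general contraction theorem.
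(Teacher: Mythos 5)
Your reduction of \eqref{eq:theta.rate} to Theorem 4 of Ghosal and van der Vaart (2007) --- prior mass via the four-way decomposition of the Kullback--Leibler terms, sieve from (B2), and entropy from (A2) together with $\lambda(\Gamma\bigtriangleup\Gamma')\lesssim\sigma_n^{d-1}\|\gamma-\gamma'\|_\infty$ --- matches the paper's Steps 1--3. One simplification you missed: the metric $d_n$ and the divergences $K_i,V_i$ already integrate over the design distribution $P_{X_i}$, and $\frac1n\sum_iP(X_i\in A)=\lambda(A)$ exactly for the jittered design, so no high-probability event for $N_\Delta$, no Bernstein inequality, and no chaining over the entropy net are needed anywhere; the ``main technical obstacle'' you flag at the end is not actually present in this problem.

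The genuine gap is in the passage from $d_n$ to $\lambda$. You discard the two ``diagonal'' terms of
\begin{equation*}
d_n^2(\theta,\theta_0)=h^2(\xi_0,\xi)\lambda(\Gamma_0\cap\Gamma)+h^2(\rho_0,\rho)\lambda(\Gamma_0^c\cap\Gamma^c)+h^2(\xi_0,\rho)\lambda(\Gamma_0\cap\Gamma^c)+h^2(\rho_0,\xi)\lambda(\Gamma_0^c\cap\Gamma)
\end{equation*}
and claim the remaining two cross terms are $\gtrsim c_{0,n}^2\,\lambda(\Gamma\bigtriangleup\Gamma_0)$. That inequality is false: Condition (C) only lower-bounds the \emph{sum} $h(\xi_0,\rho)+h(\rho_0,\xi)$, so the weighted combination of the two squared distances is bounded below by $c_{0,n}^2$ times the \emph{minimum} of the two weights $\lambda(\Gamma_0\cap\Gamma^c)$ and $\lambda(\Gamma_0^c\cap\Gamma)$, not their sum. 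Concretely, take $\xi=\rho_0$, $\rho=\rho_0$ and $\Gamma\supsetneq\Gamma_0$: then $\lambda(\Gamma_0\cap\Gamma^c)=0$ and $h(\rho_0,\xi)=0$, so both cross terms vanish while $\lambda(\Gamma\bigtriangleup\Gamma_0)=\lambda(\Gamma_0^c\cap\Gamma)$ can be large; such a $\theta$ is only excluded because the diagonal term $h^2(\xi_0,\xi)\lambda(\Gamma_0\cap\Gamma)=h^2(\xi_0,\rho_0)\lambda(\Gamma_0)$ is bounded away from zero. This is exactly why the paper pairs the four terms in three different ways (each cross term with a diagonal term sharing the same argument), applies the triangle inequality $h(\xi_0,\xi)+h(\rho_0,\xi)\ge h(\xi_0,\rho_0)$ and Condition (C) to each pairing, and only then combines the three resulting bounds on minima with the standing assumption $\lambda(\Gamma_0),\lambda(\Gamma_0^c)>0$ to isolate $\lambda(\Gamma_0^c\cap\Gamma)$ and $\lambda(\Gamma_0\cap\Gamma^c)$ separately. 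Your argument as written cannot rule out the configuration above, so \eqref{eq:gamma.rate} does not follow from it.
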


Equation~\eqref{eq:gamma.rate} claims that if the rate for $\theta$ is $\epsilon_n$, then the boundary $\gamma$ has the rate $\epsilon_n^2/c_{0,n}^2$ in terms of the discrepancy metric  $\lambda(\cdot, \cdot)$ and can be faster than $n^{-1/2}$ which is an interesting aspect of a boundary detection problem. \comment{The condition that $ n \epsilon_n^2 / \log n$ is bounded away from 0 is not restrictive since its sufficient condition $\epsilon_n \gtrsim n^{-c}$ for some $c < 1/2$ is expected for nonparametric problems. It \comment{ensures} that the parametric components with priors of sub-polynomial tails are not influential for the posterior contraction rate compared to the nonparametric part.  }

\begin{remark}
\comment{ 
It follows immediately that $ \|\xi_0 - \xi\| \lesssim \epsilon_n$ and $\|\rho_0 - \rho\| \lesssim \epsilon_n$.  These two parameters are not of our interest and are actually estimable at $n^{-1/2}$ rate.  To see this, \commentA{we assume the separation $c_{0, n}^2$ decays at a rate such that  $\epsilon_n^2/c_{0, n}^2 \rightarrow 0$ (particularly $c_{0, n} = c_0$ satisfies this)}, one may use  a two-step semi-parametric procedure: first estimate the boundary curve consistently using Theorem~\ref{th:rate}, remove a small section of pixels neighboring the estimated boundary and then estimate $(\xi,\rho)$ based on observations at the remaining pixels. The $n^{-1/2}$-rate possibly also holds for the original posterior of $(\xi,\rho)$ and will follow if a semiparametric Bernstein–-von Mises theorem can be established using the rate in Theorem~\ref{th:rate} as a preliminary rate; see \cite{Castillo}. }
\end{remark}


In the next two subsections, we consider two general classes of priors suitable for applications of Theorem~\ref{th:rate}. 

\subsection{Rate calculation using finite random series priors}
\label{section:random.series.prior}

The boundary $\gamma_0$ is a function on $\sphere$, which can be regarded also as a function on $[0, 1]^{d-1}$ with periodicity restrictions. \comment{We construct a sequence of finite dimensional approximation for functions in $\bb{C}^{\alpha}([0,1]^{d-1})$ with periodicity restriction by linear combinations of the first $J$ elements of a collection of fixed basis functions.} Let $\bsxi = \bsxi_J = (\xi_1, \ldots, \xi_J)^T$ be the vector formed by the first $J$ basis functions, and $\bs{\beta}_{0,J}^T \bsxi$ be a linear approximation to $\gamma_0$ with $\|\bs{\beta}_{0,J}\|_{\infty} < \infty$.
We assume that the basis functions satisfy the following condition: 
\begin{itemize}
\item[(D).] $\underset{1 \leq j \leq J}{\max}\|\xi_j\|_{\infty} \leq t_3 J^{t_4}$ for some constants $t_3, t_4 \geq 0$. 
\end{itemize} 

{\it Priors.} \comment{We use a random series prior for $\gamma$ induced from $\bs{\beta}^T \bsxi$ through the number of basis functions $J$ and
the corresponding coefficients $\bs{\beta}$ given $J$. For the simplicity of notations, we use $\bs{\beta}$ ($\bs{\beta}_0$) for $\bs{\beta}_J$ ($\bs{\beta}_{0,J}$) when $J$ is explicit from the context.}
Let $\Pi$ stand for the probability mass function (p.m.f.) of $J$, and also for the prior for $\bs{\beta}$.  \comment{satisfying the following conditions:}
\begin{itemize}
\item[(E1).] $
-\log \Pi(J > j) \gtrsim j\log j,$ and $-\log \Pi(J = j) \lesssim j \log j. 
$
\item[(E2).] $-\log \Pi(\| \bs{\beta} - \bs{\beta}_0 \|_1 \leq \epsilon | J) \lesssim J \log (1/\epsilon),$ and $ \Pi(\bs{\beta} \notin [-M, M]^J | J) \leq J \exp \{-C M^{2}\}
$
for some constant $C$.
\end{itemize}
\comment{
For instance, a Poisson prior on $J$ and a multivariate normal for $\bs{\beta}$ meet the required conditions. }

\comment{
Finite random series priors with a random number of coefficients form a very tractable flexible class of priors offering alternative to Gaussian process priors. Their properties including asymptotic behavior of posterior distributions have been thoroughly studied by \cite{Arbel+:13} and \cite{Shen+Ghosal:14}.}

We derive conditions to obtain the posterior contraction rate as follows.
\begin{theorem}
\label{th:random.series}
Let $\epsilon_n$ be a sequence such that $\epsilon_n \rightarrow 0$ and $n \epsilon_n^2 / \log n$ is bounded away from $0$, and  $J_n\le n$ be a sequence such that $J_n \rightarrow \infty$. Under conditions \comment{(A1), (A2), (B1), (B2), (C)}, (D), (E1) and (E2), if $\epsilon_n, J_n$ satisfy 
\begin{equation}
\| \gamma_0 - \bs{\beta}^T_{0,J_n} \bsxi \|_{\infty} \leq \epsilon_n^2/2,  \quad
c_1 n \epsilon_n^2 \leq J_n \log J_n  \leq  J_n \log n \leq c_2 n \epsilon_n^2,
\end{equation}
\comment{for some constants $c_1 > 0$ and $c_2 > 0$, then  the posterior contraction rate for $\gamma$ in terms of the distance $\lambda(\cdot, \cdot)$ is $\epsilon_n^2/c_{0,n}^2$}.
\end{theorem}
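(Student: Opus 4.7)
The plan is to reduce Theorem~\ref{th:random.series} to Theorem~\ref{th:rate} by constructing an explicit sieve tailored to the random series prior and then verifying the three prior conditions~\eqref{eq:priormass}--\eqref{eq:entropy}. First I would convert the geometric discrepancy $\lambda(\Gamma_0\bigtriangleup\Gamma)$ into a sup-norm statement on the boundary functions: in polar coordinates a star-shaped region is $\{(r,s):0\le r\le \gamma(s)\}$, so
\begin{equation*}
\lambda(\Gamma\bigtriangleup\Gamma_0)=\int_{\sphere}\frac{|\gamma(s)^d-\gamma_0(s)^d|}{d}\,d\sigma(s)\lesssim (\|\gamma\|_\infty\vee\|\gamma_0\|_\infty)^{d-1}\,\|\gamma-\gamma_0\|_\infty,
\end{equation*}
which tells me that controlling $\|\gamma-\gamma_0\|_\infty$ up to order $\epsilon_n^2$ controls $\lambda$ at the same order (uniformly on a set where $\|\gamma\|_\infty$ is bounded), and also explains why Theorem~\ref{th:rate} uses the inflated entropy radius $\epsilon_n^2/\sigma_n^{d-1}$.

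Second, I would take the sieve to be $\Sigma_n=\{\bs\beta^T\bsxi:J\le J_n,\ \|\bs\beta\|_\infty\le M_n\}$ with $M_n$ a slowly growing sequence to be fixed in a moment. By (D), any element of $\Sigma_n$ has sup-norm at most $\sigma_n\lesssim J_n^{t_4+1}M_n$. For the prior mass condition~\eqref{eq:priormass}, if $\|\bs\beta-\bs\beta_{0,J_n}\|_1\le \epsilon_n^2/(2t_3 J_n^{t_4})$ then $\|\bs\beta^T\bsxi-\bs\beta_{0,J_n}^T\bsxi\|_\infty\le \epsilon_n^2/2$, which together with the assumed approximation $\|\gamma_0-\bs\beta_{0,J_n}^T\bsxi\|_\infty\le \epsilon_n^2/2$ yields $\|\gamma-\gamma_0\|_\infty\le \epsilon_n^2$ and hence $\lambda(\Gamma_0\bigtriangleup\Gamma)\lesssim \epsilon_n^2$. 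Combining the bound $-\log\Pi(J=J_n)\lesssim J_n\log J_n\lesssim n\epsilon_n^2$ from (E1) with
\begin{equation*}
-\log\Pi\bigl(\|\bs\beta-\bs\beta_{0,J_n}\|_1\le \epsilon_n^2/(2t_3 J_n^{t_4})\mid J_n\bigr)\lesssim J_n\log(J_n^{t_4}/\epsilon_n^2)\lesssim J_n\log n\lesssim n\epsilon_n^2
\end{equation*}
from (E2), using $n\epsilon_n^2\gtrsim \log n$, verifies~\eqref{eq:priormass}.

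Third, I would verify the sieve complement bound~\eqref{eq:sieve} by the union bound $\Pi(\Sigma_n^c)\le \Pi(J>J_n)+\sum_{j\le J_n}\Pi(J=j)\Pi(\|\bs\beta\|_\infty>M_n\mid J=j)$, which by (E1) and (E2) is at most $\exp(-c_1 J_n\log J_n)+J_n^2\exp(-CM_n^2)$; choosing $M_n^2\asymp n\epsilon_n^2$ makes both terms $\le e^{-c\,n\epsilon_n^2}$. For the entropy bound~\eqref{eq:entropy}, I cover each slice (fixed $J\le J_n$) of the coefficient cube $[-M_n,M_n]^J$ in $\|\cdot\|_1$ at radius $\epsilon_n^2/(\sigma_n^{d-1} t_3 J^{t_4})$, which by (D) transports to an $\|\cdot\|_\infty$-cover of the corresponding functions at radius $\epsilon_n^2/\sigma_n^{d-1}$. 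The resulting log-covering number is of order $J_n\log(M_n J_n^{t_4}\sigma_n^{d-1}/\epsilon_n^2)\lesssim J_n\log n$, which is $\lesssim n\epsilon_n^2$ by hypothesis. Applying Theorem~\ref{th:rate} then yields the rate $\epsilon_n^2/c_{0,n}^2$ under $\lambda(\cdot,\cdot)$.

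The only delicate point is bookkeeping the polynomial factor $\sigma_n^{d-1}$ that the entropy condition requires: one must confirm that the choice $M_n^2\asymp n\epsilon_n^2$ (forced by the sieve step) is still compatible with $J_n\log(\sigma_n^{d-1}/\epsilon_n^2)\lesssim n\epsilon_n^2$ in the entropy step, which amounts to the $J_n\log n\lesssim n\epsilon_n^2$ assumption and will go through smoothly because all the blow-up factors $M_n$, $J_n^{t_4+1}$, $1/\epsilon_n$ are only polynomial in $n$.
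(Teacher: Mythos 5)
Your proposal is correct and follows essentially the same route as the paper: reduce to Theorem~\ref{th:rate} and verify the prior-mass, sieve, and entropy conditions using (D), (E1), (E2), with the sup-norm-to-$\lambda$ reduction handled exactly as in Step~3 of the paper's proof of Theorem~\ref{th:rate}. The only differences are cosmetic — the paper takes the sieve radius $\|\bs{\beta}\|_\infty\le\sqrt{n/C}$ rather than $M_n^2\asymp n\epsilon_n^2$ and bounds the sup-norm packing number directly instead of via an $\ell_1$-cover of the coefficient cube, but both choices yield the same $J_n\log n\lesssim n\epsilon_n^2$ bounds.
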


\begin{remark}
The optimal value of $J_n$, say $J_{n, \alpha}$ typically depends on the degree of smoothness
$\alpha$. We can use a fixed value $J = J_n$ when $\alpha$ is given. The posterior distribution can be easily computed, for example, by a Metropolis-Hastings algorithm. If $\alpha$ is unknown, one will need to put a prior on $J$ and reversible-jump MCMC may be needed for computation.
\end{remark}

\begin{example}[Trigonometric polynomials] 
\label{example:d=2}
	\rm
	\comment{
For the case $d = 2$ (2D image), we use trigonometric polynomials $\{1, \cos 2\pi j \omega, \sin 2\pi j \omega, \ldots :\omega \in [0,1]  \}$ as the basis. It is known if $\gamma_0$ is $\alpha$-smooth, we have $\|\gamma_0  - \bs{\beta}^T_{0,j} \bsxi \|_{\infty} \lesssim j^{-\alpha}$ for some appropriate choice of $\bs{\beta}_{0,j}$~\citep[cf.][]{Jackson:30}.} Therefore according to  Theorem~\ref{th:random.series}, we can obtain the rate $\epsilon_n$ by equating $J_n^{-\alpha} \asymp \epsilon_n^2$ and $ J_n \log J_n \asymp n \epsilon_n^2 $, which gives the following rate $\epsilon_n$ and the corresponding $J_n$:
\begin{equation}
J_n \asymp n^{1/(\alpha + 1)} (\log n)^{-1/(\alpha + 1)}, \quad \epsilon_n^2\asymp n^{-\alpha/(\alpha + 1)}( \log n)^{\alpha/(\alpha + 1)}.
\end{equation}
\end{example}

\begin{example}[Spherical harmonics]\rm
	\label{example:d=3} 
For 3D images $(d = 3)$, periodic functions on the sphere can be expanded in the spherical harmonic basis functions. Spherical harmonics are eigenfunctions of the Laplacian on the sphere. It satisfies condition (D) and more technical details and the analytical expressions of spherical harmonics can be found in~\cite[Chapter 2]{Terras:13}, while MATLAB implementation is available in~\cite{Ennis:05}.
Let $K_n$ be degree of the spherical harmonics, then the number of basis functions are $J_n = K_n^2$. The approximation error for spherical harmonics is $J_n^{-\alpha/2}$~\citep[Theorem 4.4.2]{Dai+Xu:13}. Therefore we can obtain the posterior contraction rate by equating
$
J_n^{-\alpha/2} \asymp \epsilon_n^2$ and $J_n \log J_n \asymp n \epsilon_n^2,
$
which gives
\begin{equation}
J_n \asymp n^{2/(\alpha + 2)} (\log n)^{-2/(\alpha + 2)}, \quad \epsilon_n^2 \asymp n^{-\alpha/(\alpha + 2)} (\log n)^{\alpha/(\alpha + 2)}.
\end{equation}
\end{example}

\subsection{Rescaled squared exponential Gaussian process prior on $\mathbb{S}^{1}$}
\label{subsection:rescaled.GP}
We use a rescaled squared exponential Gaussian process (GP) to induce priors on $\gamma$ when $d = 2$. Specifically, let $W$ be a GP with the squared exponential kernel function $K(t, t') =  \exp(- \| t - t' \|^2)$, where $t, t' \in \mathbb{S}^{1}$ and $\|\cdot\|$ is the Euclidean distance. Let $W^{a} = (W_{a t}, t \in \mathbb{S}^{1})$ be the scaled GP with scale $a > 0$, whose covariance kernel becomes $K_{a}(t, t')=  \exp(- a^2 \| t - t' \|^2).$ The rescaling factor $a$ acts as a smoothing parameter and allows us to control smoothness of a sample path from the prior distribution. 

When $d = 2$, it is natural to use the map $ Q: [0,1] \rightarrow \mathbb{S}^1, \omega \rightarrow (\cos 2 \pi \omega, \sin 2 \pi \omega)$ as in~\cite{MacKay:98}, then  by Lemma~\ref{lemma:bd.isometric}, the squared exponential kernel $K_a(\cdot, \cdot)$ on $\mathbb{S}^1$ has the equivalent RKHS as of the kernel $G_a(t_1, t_2)$ on $[0,1]$ defined by 
\begin{eqnarray*}
 G_a(t_1, t_2) &=&  
\exp({-a^2 \{(\cos 2\pi t_1 - \cos 2 \pi t_2)^2 + (\sin 2\pi t_1 - \sin 2\pi t_2)^2 \} }) \\
&=& \exp\{- 4\scale^2 \sin^2(\pi t_1 - \pi t_2) \}.
\end{eqnarray*}
We call $G_{\scale}(\cdot, \cdot)$ on the unit interval as {\it squared exponential periodic} (SEP) kernel.  Theorem~\ref{thm:GP.rate} gives the posterior contraction rate if a rescaled SEP Gaussian process is used as the prior. 

\begin{theorem}
	\label{thm:GP.rate}
	Let Conditions (A1), (A2), (B1), (B2) and (C) hold. 
	
	 (i). Deterministic rescaling:  If the smoothness level $\alpha$ is known, and we choose $a = a_n = n^{1/(\alpha + 1)} (\log n)^{-2/(\alpha + 1)}$, then the posterior contraction rate in Theorem~\ref{th:rate} is determined by $\epsilon_n^2 = n^{-\alpha/(\alpha + 1)} (\log n)^{2\alpha/(\alpha + 1)}$. 
	
	(ii). Random rescaling: If the rescaling factor $a$ follows a gamma prior, then the contraction rate in Theorem~\ref{th:rate} is determined by \comment{$\epsilon^2_n = n^{-\alpha/(\alpha + 1)} (\log n)^{2\alpha/(\alpha + 1)}$} for any $\alpha > 0$.  
\end{theorem}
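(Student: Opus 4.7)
My plan is to derive both parts by verifying the three prior-mass, sieve, and entropy conditions of Theorem~\ref{th:rate}, specialized to the rescaled SEP Gaussian process prior on $[0,1]$ (equivalently, the rescaled SE Gaussian process on $\mathbb{S}^{1}$ via the isometry of Lemma~\ref{lemma:bd.isometric}). Since $\lambda(\gamma,\gamma_0)\lesssim \|\gamma-\gamma_0\|_{\infty}$ for boundary functions indexed by $\mathbb{S}^{1}$, the symmetric-difference metric in~\eqref{eq:priormass} can be replaced by the supremum norm throughout, which is the natural norm for working with Gaussian process priors.

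For part (i), I would follow the concentration-function machinery of \cite{van+van:07}. Writing $\mathbb{H}^{a}$ for the RKHS of $W^{a}$, the two key ingredients are (a) a decentering bound: for each $\epsilon>0$ there exists $h_{0}\in\mathbb{H}^{a}$ with $\|h_{0}-\gamma_0\|_{\infty}\lesssim \epsilon$ and $\|h_{0}\|_{\mathbb{H}^{a}}^{2}\lesssim a$ up to logarithmic factors, obtained by approximating $\gamma_0\in\mathbb{C}^{\alpha}(\mathbb{S}^{1})$ via the eigenexpansion of the SEP kernel; and (b) a small-ball bound of the form $-\log\mathrm{P}(\|W^{a}\|_{\infty}<\epsilon)\lesssim a\,(\log(a/\epsilon))^{2}$. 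Both are the SEP analogues of classical SE estimates and are delivered by the RKHS analysis in Section~\ref{sec:rkhs.GP}. With $a=a_n$ as in the statement, they combine to give a concentration function $\phi_{\gamma_0}(\epsilon_n^{2})\lesssim n\epsilon_n^{2}$, which yields~\eqref{eq:priormass}. The sieve is taken as $\Sigma_n = M_n\epsilon_n^{2}\,\mathbb{H}_1^{a_n}+\epsilon_n^{2}\,B_1$, where $\mathbb{H}_1^{a_n}$ is the unit ball of $\mathbb{H}^{a_n}$ and $B_1$ the unit ball of $C([0,1])$; Borell's inequality produces~\eqref{eq:sieve} for $M_n$ sufficiently large, while the metric-entropy bound for $\mathbb{H}_1^{a_n}$ established in Section~\ref{sec:rkhs.GP} yields~\eqref{eq:entropy}.

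For part (ii), I would use the hierarchical framework of \cite{van+van:09}, adapted to the SEP kernel. The strategy is to verify the three conditions uniformly against the gamma prior on $a$ by splitting the scale axis into two regimes: (A) a neighborhood of the oracle scale $a_n=n^{1/(\alpha+1)}(\log n)^{-2/(\alpha+1)}$, where the continuous density of the gamma prior contributes a factor $\Pi(a\in[a_n,2a_n])\gtrsim \exp(-C a_n)$ that matches $\exp(-C n\epsilon_n^{2})$; and (B) large $a$, whose exponential tail in the gamma prior suppresses overly rough sample paths enough to permit a global sieve of the form $\bigcup_{a\le r_n}\bigl(M_n\epsilon_n^{2}\,\mathbb{H}_1^{a}+\epsilon_n^{2}B_1\bigr)$ for an appropriate $r_n$. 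Integrating the regime-(A) prior-mass bound against the gamma density yields the claimed adaptive rate.

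The main obstacle is the absence of the standard nesting $\mathbb{H}^{a'}\subset\mathbb{H}^{a}$ for $a'<a$, which is pivotal in the SE-on-Euclidean theory but fails here because the spectral measure is discrete (Lemma~\ref{lemma:spectral.measure}). This forces each RKHS ingredient---decentering, unit-ball entropy, and small-ball probability---to be reproved from the modified-Bessel-function eigen decomposition, which is where the sharp asymptotics developed in Section~\ref{section:bessel} are indispensable. Once those $a$-dependent bounds are in hand, balancing approximation against small-ball rate to obtain $\epsilon_n^{2}\asymp n^{-\alpha/(\alpha+1)}(\log n)^{2\alpha/(\alpha+1)}$ proceeds analogously to Examples~\ref{example:d=2}--\ref{example:d=3}.
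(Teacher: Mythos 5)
Your proposal is correct and follows essentially the same route as the paper: part (i) via the concentration-function machinery (decentering through Lemma~\ref{lemma:approx.RHKS}, small-ball bound from Lemma~\ref{lemma:small.ball.prob}, a sieve of the form $M\,\mathbb{H}_1^{a}+\epsilon\,\mathbb{B}_1$ controlled by Borell's inequality and the entropy bound of Lemma~\ref{lemma:entropy.lambda}), and part (ii) by adapting the hierarchical argument of \cite{van+van:09} with the constant-modified nesting property of Lemma~\ref{lemma:nesting.rkhs}. The only differences are cosmetic (your sieve radii are parametrized slightly differently, and the paper additionally tracks the factor $\sigma_n^{d-1}$ in the entropy condition explicitly, which only affects logarithmic terms).
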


Therefore, when the underlying smoothness level $\alpha$ is unknown, the SEP Gaussian process prior can adapt to $\alpha$ in a hierarchical Bayesian approach by assigning the rescaling parameter an appropriate prior such as a gamma distribution~\citep{van+van:09}. 

The proof to Theorem~\ref{thm:GP.rate} relies on an extensive study of the corresponding RKHS of the rescaled SEP Gaussian process (see Section~\ref{sec:rkhs.GP}). We also obtain the eigen structure of the SEP Gaussian process analytically, leading to efficient MCMC method for posterior sampling for various distribution families (see Section~\ref{section:bd.sampling}).

\comment{
\begin{remark}
	\label{remark:minimax}
	The rates obtained in Examples~\ref{example:d=2}, \ref{example:d=3} and Theorem~\ref{thm:GP.rate} are optimal in the minimax sense up to a logarithmic factor; see~\cite[Chapter 7]{Kor+Tsy:93}. By a uniform strengthening of Theorem~3 of \cite{Ghosal+van:07}, the conclusion can be strengthened to uniform in $(\xi_0, \rho_0, \gamma_0)\in \Theta_0$ if on  $\Theta_0$ Assumptions (A1) and (C) hold uniformly and $\|\gamma_0\|_{\infty} \leq C_0$ for a universal constant $C_0 > 0$.  
\end{remark} 
}

\comment{ 
\subsection{Rescaled Gaussian process with the heat kernel} 
For a Gaussian process prior, various kernels may be used in addition to the squared exponential kernel, for example, the heat kernel Gaussian processes studied by~\cite{Castillo+:14}.  Without introducing the technical details on heat kernel theory, a Gaussian process with the heat kernel on $\mathbb{S}^{d - 1}$ can be represented as 
\begin{equation}
W^T(x) = \sum_{k = 0}^{\infty} \sum_{l = 1}^{N_k(d)} e^{-\lambda_k T/2} Z_{k, l} e_{k,l}(x), 
\end{equation}
for any $x \in \mathbb{S}^{d - 1}$, where $Z_{k, l}$ are independent standard normal variables indexed by $(k, l)$, $e_{k,l}(\cdot)$ are the eigenvalues and orthonormal eigenfunctions of spherical harmonics of order $k$ whose dimension is $N_k(d) = (2k + d - 2)/(d - 2){{d + k - 3}\choose{k}}$, and $\lambda_k$ are the eigenvalues of the natural Laplacian on $\mathbb{S}^{d - 1}$.   Starting from eigen decomposition of the heat kernel, the rescaling is applied directly to the eigenvalues $\lambda_k$. 
Note that this is a different rescaling strategy compared to the SEP Gaussian process where the rescaling is applied to the distances between two points. If we randomly rescale the process by letting $T$ follow a gamma prior, then similar results as in Theorem~\ref{thm:GP.rate} hold (possibly with a different logarithmic factor) based on the study of RKHS of $W^T(\cdot)$ available in~\cite{Castillo+:14} which are parallel to Lemma~\ref{lemma:approx.RHKS} to Lemma~\ref{lemma:constant.rkhs}, following the argument in proving Theorem~\ref{thm:GP.rate}. 
}

\section{RKHS of SEP Gaussian processes}
\label{sec:rkhs.GP}
The RKHS of a GP plays a critical role in calculating the posterior contraction rate. There has been an extensive study of the RKHS of a GP indexed by $[0,1]^{d - 1}$~\citep[e.g.][]{van+van:07,van+van:09}. \comment{A GP indexed by $\sphere$ can be naturally related to a GP indexed by $[0, 1]^{d - 1}$} by a surjection $Q: [0, 1]^{d - 1} \rightarrow \sphere$ (for example, using the spherical coordinate system). Define the following kernels on $[0, 1]^{d - 1}$: $G(s_1, s_2) = K(Qs_1, Qs_2)$ for any $s_1, s_2 \in [0, 1]^{d - 1}$. Let $\mathbb{H}$ be the RKHS of the GP defined by the kernel $K$, equipped with the inner product $\langle \cdot, \cdot \rangle_{\mathbb{H}}$ and the RKHS norm $\|\cdot\|_{\mathbb{H}}$. \comment{For the GP with covariance kernel $G$, we denote the RKHS, its inner product and norm by  $\mathbb{H}'$, $\langle \cdot, \cdot\rangle_{\mathbb{H}'}$ and $\|\cdot\|_{\mathbb{H}'}$ respectively.} Then the following lemma shows that the two RKHSs related by the map $Q$ are isomorphic. 
\begin{lemma}
	\label{lemma:bd.isometric}
	$(\mathbb{H}', \|\cdot\|_{\mathbb{H}'})$ and $(\mathbb{H}, \|\cdot\|_{\mathbb{H}})$ are isometric; the conclusion also holds when we use the $\|\cdot\|_{\infty}$ norm. 
\end{lemma}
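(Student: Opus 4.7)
The plan is to exhibit the isometry explicitly via the pullback map $\Phi: \mathbb{H} \to \mathbb{H}'$ defined by $\Phi(h) = h \circ Q$, verify the isometric property on a dense subspace of kernel sections, and then invoke surjectivity of $Q$ both to extend $\Phi$ and to show its image is all of $\mathbb{H}'$.

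First I would check $\Phi$ on kernel sections. For any $s \in [0,1]^{d-1}$, the function $\Phi(K(\cdot, Qs))$ sends $t \mapsto K(Qt, Qs) = G(t, s)$, so kernel sections in $\mathbb{H}$ indexed by points of $Q([0,1]^{d-1})$ are carried to kernel sections in $\mathbb{H}'$. A direct expansion then shows that for $h = \sum_i a_i K(\cdot, Qs_i)$ and $h' = \sum_j b_j K(\cdot, Qt_j)$,
\[
\langle h, h'\rangle_{\mathbb{H}} = \sum_{i,j} a_i b_j K(Qs_i, Qt_j) = \sum_{i,j} a_i b_j G(s_i, t_j) = \langle \Phi(h), \Phi(h')\rangle_{\mathbb{H}'},
\]
so $\Phi$ is a linear isometry on the linear span of $\{K(\cdot, Qs): s \in [0,1]^{d-1}\}$.

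Next, since $Q$ is surjective, $\{K(\cdot, Qs): s \in [0,1]^{d-1}\} = \{K(\cdot, x): x \in \sphere\}$, whose linear span is dense in $\mathbb{H}$ by the standard construction of the RKHS. The image span coincides with $\mathrm{span}\{G(\cdot, s): s \in [0,1]^{d-1}\}$, which is dense in $\mathbb{H}'$. Hence $\Phi$ extends by continuity to an isometric linear map $\mathbb{H} \to \mathbb{H}'$ with dense range, and being an isometry between Hilbert spaces it is automatically surjective. For the $\|\cdot\|_\infty$ statement, surjectivity of $Q$ gives immediately $\|h \circ Q\|_{\infty,[0,1]^{d-1}} = \sup_{s} |h(Qs)| = \sup_{x \in \sphere} |h(x)| = \|h\|_{\infty,\sphere}$, so $\Phi$ is isometric for the sup norm as well.

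The only delicate point is that when $Q$ is not injective one might worry about whether every $g \in \mathbb{H}'$ truly factors through $Q$. This is automatic from the reproducing property: if $Qs_1 = Qs_2$ then $G(\cdot, s_1) = G(\cdot, s_2)$ pointwise, whence $g(s_1) = \langle g, G(\cdot, s_1)\rangle_{\mathbb{H}'} = \langle g, G(\cdot, s_2)\rangle_{\mathbb{H}'} = g(s_2)$ for every $g \in \mathbb{H}'$. Thus elements of $\mathbb{H}'$ are constant on fibers of $Q$ and indeed lie in the image of $\Phi$; no quotienting is required, and the main step is really just the algebraic identification $G(s_1, s_2) = K(Qs_1, Qs_2)$ together with density of kernel sections.
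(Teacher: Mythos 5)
Your proof is correct and follows essentially the same route as the paper's: both identify kernel sections through $Q$, use the defining identity $G(s_1,s_2)=K(Qs_1,Qs_2)$ to get an isometry on the dense span of kernel sections, and extend by continuity, with the sup-norm claim following from surjectivity of $Q$. Your treatment of the non-injectivity of $Q$ (constancy of elements of $\mathbb{H}'$ on fibers) is a more careful rendering of the well-definedness check the paper makes when it verifies $f_{s}=f_{s_2}$ whenever $Qs=Qs_2$.
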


However, if $K$ is the squared exponential kernel on $\sphere$, the kernel $G(\cdot, \cdot)$ is no longer a squared exponential kernel on $[0, 1]^{d - 1}$. More importantly, it is not even stationary for general $d > 2$. The case $d = 2$ is an exception, for which
the RKHS can be studied via an explicit treatment such as  analytical eigen decompositions of its equivalent kernel on the unit interval. 
We next focus on the case $d = 2$, and study the RKHS of a GP $W^a = \{W^a_t: t \in [0, 1]\}$ with the SEP kernel $G_a(\cdot, \cdot)$ which was used in Section~\ref{subsection:rescaled.GP}.

The SEP kernel $G_a(\cdot, \cdot)$ is stationary since
$
G_{\scale}(t_1, t_2) = \phi_{\scale}(t_1 - t_2),$ where $\phi_{\scale}(t) = \exp\{- 4 \scale^2 \sin^2(\pi t) \}.
$
The following result gives the explicit form of the spectral measure $\mu_\scale$ of the process $W_t^{\scale}$. Let $\delta_x$ be the Kronecker delta function and $I_n(x)$ be the modified Bessel function of the first kind with order $n$ and argument $x$ where $n \in \mathbb{Z}$ and $x \in \mathbb{R}$. 
\begin{lemma}
\label{lemma:spectral.measure} 
We have $
\phi_a(t) = \int e^{-its} d \mu_{\scale}(s),
$ where $\mu_{\scale}$ is a symmetric and finite measure and given by $\mu_{\scale} = \sum_{n = -\infty}^{\infty} e^{-2\scale^2} I_n(2\scale^2) \delta_{2\pi n}.$

In addition, the Karhunen--Lo\`{e}ve expansion of the covariance kernel is $
G_a(t, t') = \sum_{k = 1}^{\infty} v_k(a)\psi_k(t) \psi_k (t'),$ where the eigenvalues are given by 
$$v_1(a) = e^{-2\scale^2} I_0(2\scale^2), \quad v_{2j}(a) = v_{2j + 1}(a) = e^{-2\scale^2} I_j(2\scale^2), \; j \geq 1,$$ with eigenfunctions $\psi_j(t)$, $j=1,2,\ldots$ given by the Fourier basis functions $\{1, \cos 2\pi t, \sin 2\pi t, \ldots\}$ in that order. 

\end{lemma}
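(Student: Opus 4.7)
The plan is to derive everything by reducing $\phi_a$ to a Fourier series whose coefficients are modified Bessel functions, and then reading off the spectral measure and the Karhunen--Lo\`eve (KL) expansion.

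First I would use the double-angle identity $\sin^2(\pi t) = \tfrac{1}{2}(1-\cos 2\pi t)$ to rewrite
\[
\phi_a(t) = \exp\{-4a^2 \sin^2(\pi t)\} = e^{-2a^2}\,\exp(2a^2 \cos 2\pi t),
\]
which separates out the normalizing constant and reduces the task to expanding $\exp(2a^2 \cos 2\pi t)$ in a Fourier series. For this I would invoke the standard Laurent-type generating identity for the modified Bessel functions of the first kind,
\[
\exp\!\Big(\tfrac{x}{2}(z + z^{-1})\Big) = \sum_{n \in \mathbb{Z}} I_n(x)\, z^n, \qquad z \in \mathbb{C}\setminus\{0\},
\]
and specialize it at $z = e^{i 2\pi t}$ and $x = 2a^2$, using $\tfrac{1}{2}(z + z^{-1}) = \cos 2\pi t$. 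Multiplying through by $e^{-2a^2}$ yields
\[
\phi_a(t) = e^{-2a^2}\sum_{n \in \mathbb{Z}} I_n(2a^2)\, e^{i 2\pi n t}.
\]

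Recognizing the right-hand side as $\int e^{-its}\,d\mu_a(s)$ for the atomic measure $\mu_a = \sum_{n \in \mathbb{Z}} e^{-2a^2} I_n(2a^2)\,\delta_{-2\pi n}$, and then using the symmetry $I_{-n}(x) = I_n(x)$ for integer $n$ to reindex, I obtain the claimed form $\mu_a = \sum_n e^{-2a^2} I_n(2a^2)\,\delta_{2\pi n}$. Symmetry is immediate from $I_n=I_{-n}$. Finiteness follows by setting $t=0$ in the Fourier expansion, which gives $1 = \phi_a(0) = \sum_n e^{-2a^2} I_n(2a^2)$, so $\mu_a$ has total mass $1$ (consistent with $\phi_a(0)=1$).

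For the KL expansion, since $G_a(t,t') = \phi_a(t-t')$ is stationary and $1$-periodic, I would diagonalize it in the orthonormal real Fourier basis of $L^2[0,1]$: $\psi_1 \equiv 1$, $\psi_{2j}(t) = \sqrt{2}\cos 2\pi j t$, $\psi_{2j+1}(t) = \sqrt{2}\sin 2\pi j t$ for $j\ge 1$. Substituting the Fourier expansion of $\phi_a$ into $\int_0^1 G_a(t,t')\psi_k(t')\,dt'$ and using the orthogonality relations $\int_0^1 e^{-i 2\pi k t'}\cos 2\pi n t'\,dt' = \tfrac{1}{2}(\delta_{k,n}+\delta_{k,-n})$ and the analogous identity for sine, I find that each $\psi_k$ is an eigenfunction with eigenvalue $v_1(a) = e^{-2a^2}I_0(2a^2)$ and $v_{2j}(a) = v_{2j+1}(a) = e^{-2a^2}I_j(2a^2)$ for $j \ge 1$; the cosine/sine pair share the same eigenvalue precisely because $I_{-j}=I_j$. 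Since $G_a$ is continuous and the eigenfunctions form a complete orthonormal system in $L^2[0,1]$, Mercer's theorem delivers the stated KL expansion.

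The proof is essentially computational, so there is no serious obstacle; the only delicate points are (a) invoking the Bessel generating function with the correct argument to match $2a^2$ (not $4a^2$), and (b) tracking the sign convention in $\int e^{-its}d\mu_a(s)$ so that the spectral measure comes out symmetric. The pairing $v_{2j} = v_{2j+1}$ is a direct consequence of $I_{-n}=I_n$ and requires no extra work.
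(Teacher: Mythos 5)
Your proposal is correct and follows essentially the same route as the paper: both rest on the Bessel generating function $e^{x(z+1/z)}=\sum_{n}I_n(2x)z^n$ evaluated at $z=e^{\pm 2\pi i t}$ (your double-angle step and the paper's identity $z+1/z-2=-4\sin^2(\pi t)$ are the same manipulation), after which the spectral measure is read off and the eigen-decomposition follows from orthogonality of the Fourier basis. Your explicit normalization $\psi_{2j}=\sqrt{2}\cos 2\pi j t$, $\psi_{2j+1}=\sqrt{2}\sin 2\pi j t$ is in fact the careful way to make the stated identity $G_a(t,t')=\sum_k v_k(a)\psi_k(t)\psi_k(t')$ hold with the listed eigenvalues.
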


The measure $\mu_a$ is the so-called spectral measure of $W^a$. Existing literature~\cite[e.g.][]{van+van:07,van+van:09} studied convergence properties of rescaled GP on $[0, 1]^{d - 1}$ relying on the absolute continuity of the  spectral measure and \comment{the scaling relation} $\mu_a(B) = \mu_1(a B)$. However, Lemma~\ref{lemma:spectral.measure} shows that the spectral measure of a SEP Gaussian process is discrete and  the simple relationship $\mu_a(B) = \mu_1(a B)$ does not hold any more. We instead heavily use properties of modified Bessel functions to study the RKHS of a SEP Gaussian process. 

\comment{
Note that the discrete measure $\mu_{\scale}$ has subexponential tails since  
\begin{equation}
\int e^{|s|} \mu_{\scale} (d s) = \sum_{n = -\infty}^{\infty} e^{2 \pi |n|} e^{-2 \scale^2} I_n(2 \scale^2) \leq 2 e^{-2 \scale^2} \sum_{n = 0}^{\infty}  e^{2 \pi n }I_n(2 \scale^2)
\end{equation}
which is bounded by $ 2 e^{\scale^2 (e^{2 \pi} + e^{-2 \pi} - 2)} < \infty$ (Proposition~\ref{prop:bessel.basic} (a)). 
} 
\comment{The following Lemma~\ref{lemma:rkhs} describes the RKHS $\mathbb{H}^{\scale}$ of the rescaled process $W^a$,  as real parts of a closed set containing complex valued functions. }
\begin{lemma}
\label{lemma:rkhs}
The RKHS $\mathbb{H}^{\scale}$  of the process $W^a$ is the set of real parts of all functions in 
\begin{align}
\{h: [0, 1] \rightarrow \mathbb{C}, h(t) & = \sum_{n = -\infty}^{\infty} e^{-i t 2 \pi n} b_{n, \scale} e^{-2\scale^2} I_n (2\scale^2), \\ &
b_{n, \scale} \in \mathbb{C},  \sum_{n = -\infty}^{\infty} |b_{n, \scale}|^2 e^{-2\scale^2} I_n(2\scale^2) < \infty
 \},
\end{align} 
and it is equipped with the squared norm
\begin{equation}
\|h\|^2_{\mathbb{H}^{\scale}} = \sum_{n = -\infty}^{\infty} |b_{n, \scale}|^2 e^{-2\scale^2} I_n(2\scale^2) =  \sum_{n = -\infty}^{\infty}  \frac{1}{e^{-2\scale^2}I_n(2\scale^2)} \left|\int_0^1 h(t) e^{it2\pi n} dt \right|^2. 
\end{equation}
\end{lemma}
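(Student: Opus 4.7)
The plan is to combine the explicit spectral measure obtained in Lemma~\ref{lemma:spectral.measure} with the general spectral description of the RKHS of a stationary Gaussian process. For a centered real stationary GP with continuous covariance $\phi_a(t_1-t_2)$ and finite symmetric spectral measure $\mu_a$, a classical result identifies the RKHS as the set of real parts of functions
\[
h(t)=\int e^{-its}\psi(s)\,d\mu_a(s),\qquad \psi\in L^2(\mu_a,\mathbb{C}),
\]
with squared norm $\|h\|^2_{\mathbb{H}^a}=\int |\psi|^2\,d\mu_a$, where $\psi$ is taken to be the representative of minimal $L^2(\mu_a,\mathbb{C})$ norm; see, e.g., \cite{van+van:09}. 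The underlying isometry sends $e^{-it\cdot}\mapsto W_t^a$ on the closed linear span of $\{W_t^a:t\in[0,1]\}$, and its validity for a purely atomic $\mu_a$ is not obstructed because the exponentials $\{e^{-it\cdot 2\pi n}\}_t$ separate the atoms $\{2\pi n:n\in\mathbb{Z}\}$.

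Substituting the explicit form $\mu_a=\sum_n e^{-2a^2}I_n(2a^2)\,\delta_{2\pi n}$ from Lemma~\ref{lemma:spectral.measure} and setting $b_{n,a}:=\psi(2\pi n)$ collapses the integral to
\[
h(t)=\sum_{n=-\infty}^{\infty} e^{-it2\pi n}\,b_{n,a}\,e^{-2a^2}I_n(2a^2),\qquad \|h\|^2_{\mathbb{H}^a}=\sum_{n=-\infty}^{\infty}|b_{n,a}|^2 e^{-2a^2}I_n(2a^2),
\]
which is exactly the first representation in the statement; the $L^2(\mu_a,\mathbb{C})$ integrability of $\psi$ is the stated summability condition on $b_{n,a}$. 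Any such $h$ is automatically $1$-periodic because $\mu_a$ is supported on $2\pi\mathbb{Z}$, so the restriction to $[0,1]$ is well-defined.

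For the equivalent Fourier-coefficient form of the norm, I would compute $\int_0^1 h(t)e^{it2\pi m}\,dt$ by interchanging the sum with the integral. The interchange is justified by Cauchy--Schwarz together with the finiteness $\sum_n e^{-2a^2}I_n(2a^2)=\phi_a(0)=1<\infty$ and the assumed summability. Orthonormality of $\{e^{-it2\pi n}\}_{n\in\mathbb{Z}}$ on $[0,1]$ then gives $\int_0^1 h(t)e^{it2\pi m}\,dt=b_{m,a}\,e^{-2a^2}I_m(2a^2)$, so $b_{m,a}$ is determined as the Fourier coefficient of $h$ divided by $e^{-2a^2}I_m(2a^2)$; substituting back produces the second expression for $\|h\|^2_{\mathbb{H}^a}$.

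The main subtlety will be passing from the complex-valued spectral representation to the real RKHS. Here the symmetry $\mu_a(-B)=\mu_a(B)$ from Lemma~\ref{lemma:spectral.measure} combined with $I_{-n}=I_n$ forces the minimizing representative $\psi$ to satisfy $\psi(-s)=\overline{\psi(s)}$, equivalently $b_{-n,a}=\overline{b_{n,a}}$; with this symmetry, taking real parts preserves the minimal $L^2(\mu_a,\mathbb{C})$ norm, and the displayed formulas continue to define the RKHS norm on the real-valued space $\mathbb{H}^a$. Beyond this point the argument is bookkeeping, so the structural content of the proof is fully contained in Lemma~\ref{lemma:spectral.measure} and the classical spectral-measure characterization of RKHSs.
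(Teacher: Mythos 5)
Your proposal is correct and follows essentially the same route as the paper: the paper's proof simply verifies that the discrete spectral measure $\mu_a$ from Lemma~\ref{lemma:spectral.measure} has subexponential tails and then invokes the general spectral characterization of the RKHS of a stationary Gaussian process (Lemma 2.1 of \cite{van+van:07}), exactly the classical result you appeal to. Your additional bookkeeping --- the Fourier-coefficient identification of $b_{n,a}$ via orthonormality of $\{e^{-it2\pi n}\}$ and the Hermitian symmetry of the minimizing representative --- is sound and merely spells out details the paper leaves implicit.
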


We then consider the approximation property of $\mathbb{H}^{\scale}$ to an arbitrary smooth function $w \in \mathbb{C}^{\alpha}[0, 1]$. Unlike the approach approximating $w$ by a convolution of $w_0$ with a smooth function as used in~\cite{van+van:07,van+van:09}, we use a finite Fourier approximation to $w$. 
\begin{lemma}
\label{lemma:approx.RHKS}
For any function $w \in \mathbb{C}^{\alpha}[0, 1]$, there exists constants $C_w$ and $D_w$ depending only on $w$ such that $
\inf\{\|h\|^2_{\mathbb{H}^{\scale}}: \|w - h\|_{\infty} \leq C_w \scale^{-\alpha}\} \leq D_w \scale,
$  as $\scale \rightarrow \infty.$
\end{lemma}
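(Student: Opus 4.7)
\textbf{Proof plan for Lemma~\ref{lemma:approx.RHKS}.} The strategy is to exhibit $h$ as a trigonometric polynomial approximation to $w$ of degree $N\asymp a$, read off its RKHS norm from the explicit formula of Lemma~\ref{lemma:rkhs}, and reduce everything to a uniform lower bound on the eigenvalues $e^{-2a^2}I_n(2a^2)$ for $|n|\le N$.

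First, choose $N=\lfloor a\rfloor$ and take $h$ to be a near-best uniform approximation to $w$ (viewed as a $1$-periodic function on $\mathbb{R}$) by trigonometric polynomials of degree at most $N$; write $h(t)=\sum_{|n|\le N} c_n\,e^{-2\pi i n t}$ with $c_n=\int_0^1 h(t)\,e^{2\pi i n t}\,dt$. Jackson's theorem on the circle, applied to $w\in \mathbb{C}^{\alpha}$, produces a constant $C_w$ depending only on $w$ and $\alpha$ with $\|w-h\|_\infty\le C_w N^{-\alpha}\le C_w a^{-\alpha}$, establishing the required uniform approximation. Since $\hat h(n)=c_n$ for $|n|\le N$ and vanishes otherwise, Lemma~\ref{lemma:rkhs} directly gives
\begin{equation*}
\|h\|^2_{\mathbb{H}^{a}} \;=\; \sum_{|n|\le N}\frac{|c_n|^2}{e^{-2a^2} I_n(2a^2)}.
\end{equation*}

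The technical heart of the argument is to show, uniformly in $|n|\le N\asymp a$, the lower bound
\begin{equation*}
e^{-2a^2} I_n(2a^2) \;\gtrsim\; \frac{1}{a}, \qquad a\to\infty.
\end{equation*}
The classical large-argument asymptotic $I_n(x)\sim e^x/\sqrt{2\pi x}$ is valid only when $n=o(\sqrt{x})$, whereas here $x=2a^2$ and $n$ may be as large as $a\asymp\sqrt{x/2}$, so the naive asymptotic is insufficient. I will instead appeal to the uniform (Debye-type) estimates for modified Bessel functions assembled in Section~\ref{section:bessel}, which combine the monotonicity $I_n(x)\le I_0(x)$ with a sharp lower bound on the ratio $I_n(x)/I_0(x)$ whenever $|n|\le c\sqrt{x}$, yielding the displayed lower bound with constants independent of $n$.

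With that estimate in hand, Parseval gives $\sum_{|n|\le N}|c_n|^2=\|h\|_{L^2}^2\le (\|w\|_\infty+\|w-h\|_\infty)^2$, which is bounded by a constant depending only on $w$ once $a$ is large, so
\begin{equation*}
\|h\|^2_{\mathbb{H}^{a}} \;\lesssim\; a\sum_{|n|\le N}|c_n|^2 \;\le\; D_w\, a,
\end{equation*}
with $D_w$ depending only on $w$, as required. The main obstacle is plainly the uniform Bessel lower bound in the intermediate regime $|n|\asymp\sqrt{x/2}$; this is exactly the scenario that forces the new estimates on modified Bessel functions announced as the third main contribution of the paper and collected in Section~\ref{section:bessel}. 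The rest of the argument -- the Jackson approximation and the Parseval-based bookkeeping -- is routine once that estimate is available.
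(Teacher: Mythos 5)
Your proposal is correct and follows essentially the same route as the paper's proof: a Jackson-type trigonometric approximant of degree $N\asymp a$, the explicit RKHS norm from Lemma~\ref{lemma:rkhs}, a reduction of the norm bound to the lower estimate $e^{-2a^2}I_n(2a^2)\gtrsim 1/a$ for $|n|\le N$ (which the paper obtains from the monotonicity of $I_n(2a^2)$ in $n$ together with Proposition~\ref{prop:bessel.limit} applied at $n=N$, $Nx^{-1/2}\to 1/\sqrt{2}$), and a Parseval bound on $\sum|c_n|^2$. You correctly identify the intermediate regime $n\asymp\sqrt{x}$ as the technical crux and the Bessel estimates of Section~\ref{section:bessel} as its resolution.
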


Lemma~\ref{lemma:entropy.lambda} obtains an entropy estimate using Proposition~\ref{prop:bessel.moment} on modified Bessel functions. 
\begin{lemma}
\label{lemma:entropy.lambda}
Let $\mathbb{H}^{\scale}_1$ be the unit ball of the RHKS of the process $W^{\scale} = (W^{\scale}_t: 0 \leq t \leq 1)$, then we have 
$
\log N(\epsilon, \mathbb{H}^{\scale}_1, \|\cdot\|_{\infty}) \lesssim \max(\scale, 1) \cdot \left\{\log ({1}/{\epsilon})\right\}^2. 
$
\end{lemma}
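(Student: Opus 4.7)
The plan is to combine a truncation of the Karhunen--Loève expansion from Lemma~\ref{lemma:spectral.measure} with a volumetric cover of the resulting finite-dimensional truncated ball. By that expansion, every $h$ in the unit RKHS ball is representable as $h = \sum_{k\geq 1} d_k\psi_k$ in the Fourier eigenbasis with $\sum_k d_k^2/v_k(\scale) \leq 1$, where $v_1(\scale) = e^{-2\scale^2}I_0(2\scale^2)$ and $v_{2j}(\scale) = v_{2j+1}(\scale) = e^{-2\scale^2}I_j(2\scale^2)$. For $N$ to be chosen, I split $h = h_N + r_N$ with $h_N = \sum_{k\leq N}d_k\psi_k$; since $\|\psi_k\|_\infty\leq \sqrt{2}$, Cauchy--Schwarz gives
$$\|r_N\|_\infty \;\leq\; \sqrt{2}\sum_{k>N}|d_k| \;\leq\; \sqrt{2}\Bigl(\sum_{k>N}d_k^2/v_k(\scale)\Bigr)^{1/2}\Bigl(\sum_{k>N}v_k(\scale)\Bigr)^{1/2} \;\leq\; \sqrt{2}\Bigl(\sum_{k>N}v_k(\scale)\Bigr)^{1/2},$$
so the truncation error is reduced to a tail sum of eigenvalues.

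Next, I would invoke Proposition~\ref{prop:bessel.moment} to estimate $\sum_{j>N/2}e^{-2\scale^2}I_j(2\scale^2)$. The weights $\{e^{-2\scale^2}I_n(2\scale^2)\}_{n\in\mathbb{Z}}$ form the pmf of a Skellam$(\scale^2,\scale^2)$ random variable with variance $2\scale^2$, and hence concentrate on indices $|n|\lesssim \max(\scale,1)\sqrt{\log(1/\epsilon)}$. Choosing $N = C\max(\scale,1)\log(1/\epsilon)$ for a sufficiently large absolute constant $C$ then forces $\sum_{k>N}v_k(\scale)\leq \epsilon^2/8$ and hence $\|r_N\|_\infty \leq \epsilon/2$, leaving only an $\epsilon/2$-cover of the truncated ellipsoid to be constructed.

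For the finite-dimensional part, the truncated ball $\{\sum_{k\leq N}d_k\psi_k : \sum_{k\leq N}d_k^2/v_k(\scale)\leq 1\}$ has coefficients satisfying $|d_k|\leq \sqrt{v_k(\scale)}\leq 1$. Discretizing each $d_k$ on a grid of spacing $\delta_k$ and using $\|h_N-\tilde h_N\|_\infty \leq \sqrt 2\sum_{k\leq N}|d_k-\tilde d_k|$ together with the constraint $\sum_k\delta_k \leq \epsilon/(2\sqrt 2)$, I would set $\delta_k \propto \sqrt{v_k(\scale)}$; together with $\sum_{k\leq N}\sqrt{v_k(\scale)} \leq \sqrt{N}$ (Cauchy--Schwarz and $\sum_k v_k(\scale)\leq 1$), the log-cardinality of the grid becomes $\sum_k\log(1 + 2\sqrt{v_k(\scale)}/\delta_k) \lesssim N\log(N/\epsilon)$. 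Substituting $N\asymp \max(\scale,1)\log(1/\epsilon)$ yields the stated bound $\log N(\epsilon,\mathbb{H}^{\scale}_1,\|\cdot\|_\infty) \lesssim \max(\scale,1)\log^2(1/\epsilon)$.

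The main obstacle will be the tail estimate in the second step: one must handle uniformly the case $\scale\leq 1$, in which $\{I_j(2\scale^2)\}$ decays super-geometrically in $j$, and the case $\scale\gg 1$, in which the weights are diffused around $|j|\asymp \scale^2$ and only decay at a Skellam-type sub-Gaussian rate. A unified tail bound that delivers an $N$ of order $\max(\scale,1)\log(1/\epsilon)$ in both regimes is precisely what Proposition~\ref{prop:bessel.moment} is engineered to provide; once that is invoked, the remaining truncation-plus-volumetric steps are routine.
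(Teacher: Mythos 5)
Your argument is correct in its overall architecture but takes a genuinely different route from the paper. The paper does not truncate the Karhunen--Lo\`{e}ve expansion at all: following Lemma~2.3 of van der Vaart and van Zanten (2007), it covers $\mathbb{H}^{a}_1$ by piecewise polynomials of degree $k\asymp\log(1/\epsilon)$ on a partition of $[0,1]$ into intervals of length $\delta\asymp 1/\max(a,1)$, and the only place the Bessel structure enters is through the moment bound $\beta_{2j}^{a}=\sum_n e^{-2a^2}I_n(2a^2)(2\pi|n|)^{2j}\leq(2\pi)^{2j}\frac{(4j)!}{(2j)!}\max(a^{2j},1)$ of Proposition~\ref{prop:bessel.moment}, which makes $\sqrt{\beta_{2j}^{a}}\,\delta^{j}/j!$ uniformly bounded. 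The resulting count is $(\delta^{-1}+1)\cdot k\log(k/\epsilon)$, i.e.\ the ``dimension'' $\max(a,1)$ and the ``resolution'' $\log(1/\epsilon)\log(1/\epsilon)$ enter multiplicatively with no $\log a$ anywhere. Your spectral truncation at $N\asymp\max(a,1)\log(1/\epsilon)$ is a legitimate alternative, and the tail bound you need does follow from Proposition~\ref{prop:bessel.moment} by a Markov argument: taking $j\asymp\log(1/\epsilon)$ gives $\sum_{|n|>N/2}e^{-2a^2}I_n(2a^2)\leq (N/2)^{-2j}\frac{(4j)!}{(2j)!}\max(a^{2j},1)\leq(8j\max(a,1)/N)^{2j}\leq\epsilon^{2}$ once the constant $C$ in $N=C\max(a,1)\log(1/\epsilon)$ is large enough; your truncation-error reduction via weighted Cauchy--Schwarz and $\|\psi_k\|_\infty\leq\sqrt2$, $\sum_k v_k(a)=1$ is also fine.

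The one quantitative leak is in the volumetric step. Your per-coordinate grid has $\asymp\sqrt{N}/\epsilon$ points, so the log-cardinality is $N\log(N/\epsilon)\asymp\max(a,1)\log(1/\epsilon)\bigl\{\log(1/\epsilon)+\log\max(a,1)+\log\log(1/\epsilon)\bigr\}$, which matches the stated bound $\max(a,1)\{\log(1/\epsilon)\}^2$ only under the side condition $\log\max(a,1)\lesssim\log(1/\epsilon)$. That condition holds in every application in the paper (there $a$ and $1/\epsilon$ are polynomially coupled through $n$), and the downstream small-ball bound of Lemma~\ref{lemma:small.ball.prob} is anyway phrased with $\log(a/\epsilon)$, which absorbs the extra term; but as a statement uniform over all $a$ and $\epsilon$ your version is slightly weaker than the lemma as written. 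Either state the restriction explicitly, or adopt the paper's piecewise-polynomial cover, whose cell count scales linearly in $\max(a,1)$ rather than in $N\log N$.
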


As a corollary of Lemma~\ref{lemma:entropy.lambda}, using the connection between the entropy of the unit ball of the RKHS and the small ball probability~\citep{Kue+Li:93,Li+Linde:99}, we have the following estimate of the small ball probability. 
\begin{lemma}[Lemma 4.6 in~\cite{van+van:09}]
\label{lemma:small.ball.prob}
For any $a_0 > 0$, there exits constants $C$ and $\epsilon_0$ that depend only on $a_0$ such that, for $a \geq a_0$ and $\epsilon \leq \epsilon_0$, 
\begin{equation}
-\log \P \left( \underset{0 \leq t \leq 1}{\sup} |W_t^{\scale}| \leq \epsilon \right) \leq C a \left(\log \frac{a}{\epsilon}\right)^2. 
\end{equation}
\end{lemma}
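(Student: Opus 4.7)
The plan is to invoke the well-established duality between the metric entropy of the unit ball of the RKHS and the small ball probability of a centered Gaussian process, due to Kuelbs--Li (1993) and Li--Linde (1999). In the form most convenient here, this duality asserts that if $\log N(\epsilon, \mathbb{H}_1, \|\cdot\|) \leq \psi(\epsilon)$ for a regularly varying function $\psi$, then $-\log \P(\|W\|\leq \epsilon)$ is bounded, up to multiplicative constants and a shift in argument, by $\psi$ itself. This mechanism is precisely what drives Lemma~4.6 of van der Vaart and van Zanten (2009), so the proof here amounts to transporting their argument to the SEP Gaussian process with the entropy input supplied by Lemma~\ref{lemma:entropy.lambda}.

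Concretely, I would proceed in three steps. First, apply Lemma~\ref{lemma:entropy.lambda} to obtain $\log N(\epsilon, \mathbb{H}^{a}_1, \|\cdot\|_{\infty}) \lesssim \max(a,1)\,(\log(1/\epsilon))^2$, uniformly over $a\geq a_0$ and small $\epsilon$. Second, invoke the Kuelbs--Li--Linde inversion to convert this entropy estimate into the small-ball bound $-\log \P(\sup_{0\leq t\leq 1} |W_t^{a}|\leq \epsilon) \lesssim a\,(\log(1/\epsilon))^2$, valid for $a\geq a_0$ and $\epsilon\leq \epsilon_0(a_0)$. Third, upgrade $\log(1/\epsilon)$ to $\log(a/\epsilon)$ by splitting into the cases $\epsilon\geq 1/a$, where $\log(a/\epsilon)\asymp \log a + \log(1/\epsilon)$ can be absorbed by inflating the constant, and $\epsilon<1/a$, where $\log(a/\epsilon)\leq 2\log(1/\epsilon)$ once $\epsilon$ is small enough. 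This yields the required form $C a\,(\log(a/\epsilon))^2$ with $C$ and $\epsilon_0$ depending only on $a_0$.

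The main obstacle is the bookkeeping of constants in the entropy-to-small-ball passage: the Li--Linde result requires the proportionality constants to be uniform in $a$, and Lemma~\ref{lemma:entropy.lambda} must be shown to deliver its bound with constants depending only on $a_0$ as $a$ ranges over $[a_0,\infty)$. This uniformity is ultimately inherited from the uniform Bessel-function estimates that feed into Lemma~\ref{lemma:entropy.lambda}; once it is in place, the remainder is a routine application of the standard Gaussian small-ball machinery and follows the template of van der Vaart and van Zanten (2009, Lemma~4.6) essentially verbatim, which is why the authors state the result as a direct quotation of their lemma.
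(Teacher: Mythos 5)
Your proposal is correct and follows essentially the same route as the paper, whose proof simply combines the entropy bound of Lemma~\ref{lemma:entropy.lambda} with the argument of Lemma~4.6 in \cite{van+van:09}, i.e.\ the Kuelbs--Li / Li--Linde entropy-to-small-ball duality. The only minor remark is that your third step is not really needed: for $a\geq 1$ the bound with $\log(a/\epsilon)$ is weaker than the one with $\log(1/\epsilon)$, so it follows immediately.
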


The proof of Theorem~\ref{thm:GP.rate}(ii) needs a nesting property of the RKHS of $W^{\scale}$ for different values of $\scale$. Lemma 4.7 in~\cite{van+van:09} proved that $\sqrt{a}\mathbb{H}_1^a \subset \sqrt{b} \mathbb{H}_1^b$ if $a \leq b$ for a squared exponential GP indexed by $[0, 1]^{d - 1}$. For the SEP Gaussian process prior, this does not hold but can be modified up to a global constant. 
\begin{lemma}
\label{lemma:nesting.rkhs} 
If $a \leq b$, then $\sqrt{a} \mathbb{H}_1^a \subset \sqrt{c b}  \mathbb{H}_1^b$ for a universal constant $c$.  
\end{lemma}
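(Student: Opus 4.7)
My plan is to reduce the set-theoretic inclusion to a pointwise scalar inequality using the Fourier diagonalization of Lemma~\ref{lemma:rkhs}. That lemma identifies the (complex) RKHS $\mathbb{H}^a$ with the Hilbert space in which an element $h$ with Fourier coefficients $c_n(h) := \int_0^1 h(t) e^{it 2\pi n}\,dt$ has squared norm
\[
\|h\|_{\mathbb{H}^a}^2 = \sum_{n \in \mathbb{Z}} \frac{|c_n(h)|^2}{e^{-2a^2} I_n(2a^2)}.
\]
The inclusion $\sqrt{a}\,\mathbb{H}_1^a \subset \sqrt{cb}\,\mathbb{H}_1^b$ is equivalent to the single norm inequality $\|h\|_{\mathbb{H}^b}^2 \le (cb/a)\|h\|_{\mathbb{H}^a}^2$ for every $h$, and since both norms act diagonally in the Fourier basis this is equivalent to the scalar inequality $g_n(a) \le c\, g_n(b)$ for every integer $n$, where $g_n(x) := x\, e^{-2x^2} I_n(2x^2)$. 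Proving the lemma thus reduces to the uniform Bessel estimate $\sup_{n,\,0<a\le b}\, g_n(a)/g_n(b) \le c$ for a universal $c$.

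I would establish this uniform bound by combining the Bessel function results in Section~\ref{section:bessel} with a case analysis on the size of $n$ relative to $a^2$ and $b^2$. In the low-index regime $n \lesssim a^2$, the uniform Laplace-type asymptotic $e^{-y} I_n(y) \asymp (2\pi y)^{-1/2} \exp(-n^2/(2y))$ gives $g_n(x) \asymp (2\sqrt{\pi})^{-1} \exp(-n^2/(4x^2))$, from which
\[
\frac{g_n(a)}{g_n(b)} \lesssim \exp\!\Big(-\frac{n^2(b^2 - a^2)}{4 a^2 b^2}\Big) \le 1.
\]
In the high-index regime $n \gtrsim b^2$, the power series $I_n(y) = \sum_{k \ge 0} (y/2)^{n + 2k}/(k!(n+k)!)$ together with $a \le b$ yields $g_n(a)/g_n(b) \le (a/b)^{2n+1} e^{2(b^2 - a^2)}$, which is uniformly bounded (and in fact small) for large $n$. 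The delicate case is the intermediate window $a^2 \lesssim n \lesssim b^2$.

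For the transition window I plan to invoke the integral representation $e^{-y} I_n(y) = \pi^{-1} \int_0^\pi e^{-y(1-\cos\theta)} \cos(n\theta)\,d\theta$ and Laplace-method bounds that are uniform in $n$, as packaged in Section~\ref{section:bessel}, to obtain matching upper and lower bounds on $e^{-2x^2} I_n(2x^2)$ up to an absolute constant; together with the $a/b \le 1$ prefactor this produces the required bound on $g_n(a)/g_n(b)$. The main obstacle of the whole argument is precisely this transition regime, where neither the series expansion nor the large-argument asymptotic is directly applicable. A derivative computation gives
\[
g_n'(x) = e^{-u}\bigl[(1 - 2u)I_n(u) + u\bigl(I_{n-1}(u) + I_{n+1}(u)\bigr)\bigr]\Big|_{u = 2x^2},
\]
which, combined with the recurrence $2 I_n'(u) = I_{n-1}(u) + I_{n+1}(u)$, strongly suggests that $g_n$ is actually nondecreasing on $(0,\infty)$ for every integer $n \ge 0$; a rigorous proof of this monotonicity would give $c = 1$, but it is not needed for the statement, and the universal constant $c$ in the lemma absorbs whatever slack is left by the Bessel estimates invoked in the transition window.
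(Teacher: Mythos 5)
Your reduction to the scalar inequality $g_n(a) \le c\,g_n(b)$ with $g_n(x) = x e^{-2x^2} I_n(2x^2)$ is exactly the paper's first step, but the analytic core of your argument has a genuine gap. The regime you yourself flag as ``the main obstacle'' --- the transition window $a^2 \lesssim n \lesssim b^2$ --- is never resolved: you defer to Laplace-method bounds ``uniform in $n$, as packaged in Section~\ref{section:bessel}'', but no such uniform-in-$n$ bounds appear there. Proposition~\ref{prop:bessel.limit} only controls $\sqrt{x}e^{-x}I_n(x)$ in the limit $x \to \infty$ with $n x^{-1/2}$ convergent, i.e.\ $n = O(a)$, far short of the range $n \lesssim a^2$ over which you invoke the Gaussian approximation $e^{-y}I_n(y) \asymp (2\pi y)^{-1/2}e^{-n^2/(2y)}$; that approximation itself breaks down once $n$ is comparable to the argument $y$. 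Moreover, the monotonicity you offer as a fallback (``$g_n$ is nondecreasing on $(0,\infty)$ for every $n \ge 0$'') is false for $n=0$: since $e^{-x}I_0(x) \sim (2\pi x)^{-1/2}\bigl(1 + \tfrac{1}{8x} + \cdots\bigr)$, the function $\sqrt{x}e^{-x}I_0(x)$ approaches its limit $(2\pi)^{-1/2}$ from above and is eventually decreasing. This is precisely why the lemma requires a constant $c>1$ rather than $c=1$.

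The paper closes the gap more elementarily and without any case analysis in $n$ versus $a,b$. Proposition~\ref{prop:bessel.increasing}, proved from Amos's two-sided bounds on the ratio $I_{n+1}(x)/I_n(x)$, shows that $f_n(x)=\sqrt{x}e^{-x}I_n(x)$ is increasing on all of $[0,\infty)$ for every $n\ge 2$ (so the ratio of interest is at most $1$ there), and is increasing on $[0,\,n+1/2]$ for $n=0,1$. The two exceptional indices are then finished by observing that $f_0$ and $f_1$ are continuous, positive, and converge to a positive limit (Proposition~\ref{prop:bessel.limit}), hence are bounded above and below away from zero on $[n+1/2,\infty)$; this yields the universal constant $c$. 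To complete your proof along your own lines you would need to actually establish a two-sided estimate on $e^{-y}I_n(y)$ uniform over the transition window, which is substantially harder than the ratio-bound argument the paper uses and is not supplied by anything in Section~\ref{section:bessel}.
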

When $a \downarrow 0$, sample paths of $W^a$ tend to concentrate to a constant value by the following lemma. This property is crucial in controlling the variation of sample paths for small $a$. 
\begin{lemma}
\label{lemma:constant.rkhs}
For $h \in \mathbb{H}_1^{\scale}$, we have $|h(0)| \leq 1$ and $|h(t) - h(0)| \leq 2 \sqrt{2} \pi \scale t$ for every $t \in [0, 1]$. 
\end{lemma}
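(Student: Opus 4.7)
The plan is to use the series representation of $\mathbb{H}^a$ from Lemma~\ref{lemma:rkhs} and reduce both bounds to two identities for the modified Bessel functions: $\sum_{n \in \ZZ} I_n(x) = e^x$ and $\sum_{n \in \ZZ} n^2 I_n(x) = x e^x$. Both follow from the generating function $e^{x \cos \theta} = \sum_n I_n(x) e^{in\theta}$, the first by setting $\theta = 0$ and the second by differentiating twice in $\theta$ and then setting $\theta = 0$; alternatively, they are instances of Proposition~\ref{prop:bessel.moment}. Throughout I work with a complex representative $h(t) = \sum_n e^{-it 2\pi n} b_{n,a} e^{-2a^2} I_n(2a^2)$ with $\|h\|_{\mathbb{H}^a}^2 = \sum_n |b_{n,a}|^2 e^{-2a^2} I_n(2a^2) \leq 1$; the bounds for the real part follow immediately.

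For the first claim, specializing at $t=0$ gives $h(0) = \sum_n b_{n,a} e^{-2a^2} I_n(2a^2)$. By Cauchy--Schwarz with weights $w_n := e^{-2a^2} I_n(2a^2) \geq 0$,
\begin{equation*}
|h(0)|^2 \leq \Bigl(\sum_n |b_{n,a}|^2 w_n\Bigr) \Bigl(\sum_n w_n\Bigr) = \|h\|_{\mathbb{H}^a}^2 \cdot e^{-2a^2} \sum_n I_n(2a^2) = \|h\|_{\mathbb{H}^a}^2 \leq 1,
\end{equation*}
since $\sum_n I_n(2a^2) = e^{2a^2}$. Hence $|h(0)| \leq 1$.

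For the second claim, write $h(t) - h(0) = \sum_n (e^{-it 2\pi n} - 1) b_{n,a} w_n$ and again apply Cauchy--Schwarz to obtain
\begin{equation*}
|h(t) - h(0)|^2 \leq \|h\|_{\mathbb{H}^a}^2 \cdot \sum_n |e^{-it 2\pi n} - 1|^2 w_n = 4 \sum_n \sin^2(\pi n t)\, w_n.
\end{equation*}
Using the elementary bound $\sin^2(\pi n t) \leq \pi^2 n^2 t^2$ together with $\sum_n n^2 I_n(2a^2) = 2a^2 e^{2a^2}$, the right-hand side is at most $4 \pi^2 t^2 \cdot e^{-2a^2} \sum_n n^2 I_n(2a^2) = 8 \pi^2 a^2 t^2$, giving $|h(t) - h(0)| \leq 2\sqrt{2}\pi a t$, as desired.

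The only non-routine input is identifying the two Bessel-series identities; once these are in hand, both statements are one application of Cauchy--Schwarz. I do not foresee a serious obstacle beyond keeping track of the symmetric summation over $\ZZ$ and the elementary estimate on $\sin$.
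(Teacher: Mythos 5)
Your proof is correct and follows essentially the same route as the paper's: apply Cauchy--Schwarz with weights $e^{-2a^2}I_n(2a^2)$ to the series representation from Lemma~\ref{lemma:rkhs}, then invoke $\sum_n e^{-2x}I_n(2x)=1$ and $\sum_n e^{-2x}I_n(2x)n^2=2x$ (the latter being exactly Proposition~\ref{prop:2nd.moment}). The only cosmetic difference is that you bound $|e^{-i2\pi nt}-1|^2=4\sin^2(\pi nt)\le 4\pi^2n^2t^2$ explicitly, where the paper passes directly to $t^2(2\pi n)^2$.
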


%
%

\section{Sampling Algorithms}
\label{section:bd.sampling}

We assume that the origin (center of the image) is inside the boundary, and thus use it as the reference point to represent the observed image in a polar coordinate system as $(\bs{\omega}, \bs{r}; \bs{Y})$, where \comment{$(\bs{\omega}, \bs{r}) = \{(\omega_i, r_i)\}_{i = 1}^n$ are the locations using polar coordinates and $\bs{Y} = \{Y_i\}_{i = 1}^n$ are the image intensities. }Let $\gamma$ be a closed curve, and $\bs{\gamma}$ be values of $\gamma$ evaluated at each $\bs{\omega}$.

For most kernels, the eigenfunctions and eigenvalues are challenging to obtain although there are several exceptions~\citep[Ch.~4.3]{Rasmussen+Williams:06}. Therefore a randomly rescaled GP prior may be infeasible in practice since the numerical inversion of a covariance matrix is often needed when no analytical forms are available. However, thanks to the analytical eigen decomposition the SEP kernel in Lemma~\ref{lemma:spectral.measure}, we can implement this theoretically appealing prior in an computationally efficient way. If a curve  $\gamma(\omega) \sim \GP(\mu(\omega), G_a(\cdot, \cdot)/\tau)$, we then have the equivalent representation
$
\gamma(\omega) = \mu(\omega) + \sum_{k = 1}^{\infty} z_k \psi_k(\omega),
$
where $z_k \sim N(0, v_k(a) / \tau)$ independently.

The modified Bessel function of the first kind used in $v_k(a)$'s is a library function in most software, such as {\tt besselI} in {\tt R} language. Figure~\subref*{fig:decay} shows that eigenvalues decay very fast when $a = 1, 10$. When $a$ increases, the smoothness level of the kernel decreases. In practice we typically do not use values as large as 100 since then the kernel becomes very close to the identity matrix and thus the resulting prior path becomes very rough. 
The fast decay rate of \comment{$v_k(a)$ for fixed $a$ (Proposition~\ref{prop:bessel.basic}} (c3)) guarantees that some suitable finite order truncation to the Karhunen-Lo\`{e}ve expansion is able to approximate the kernel function well. Suppose we use $L = 2J + 1$ basis functions, then the truncated process is given by
$
\gamma(\omega) = \sum_{k = 1}^{L} z_k \psi_k(\omega) + \mu(\omega).
$
\comment{Let ${\rm PVE}_a = \sum_{k = 1}^L v_k(a) / \sum_{k = 1}^{\infty} v_k(a)$ be the percentage of variance explained by the first $L$ basis functions, where the denominator $\sum_{k = 1}^{\infty} v_k(a) = e^{-2 a^2} \sum_{k = -\infty}^{\infty} I_k(2 a^2) = 1$ according to the definition of $v_k$ in Lemma~\ref{lemma:spectral.measure} and the properties of the modified Bessel function of the first kind in Proposition~\ref{prop:bessel.basic}. }
Figure~\subref*{fig:varianceExplained} shows that with $J = 10$, we are able to explain at least $98 \%$ of all the variability for a reasonable range of $a$'s from 0 to 10. 
\begin{figure}[h]
\centering
\subfloat[]{\includegraphics[width = 0.45\textwidth]{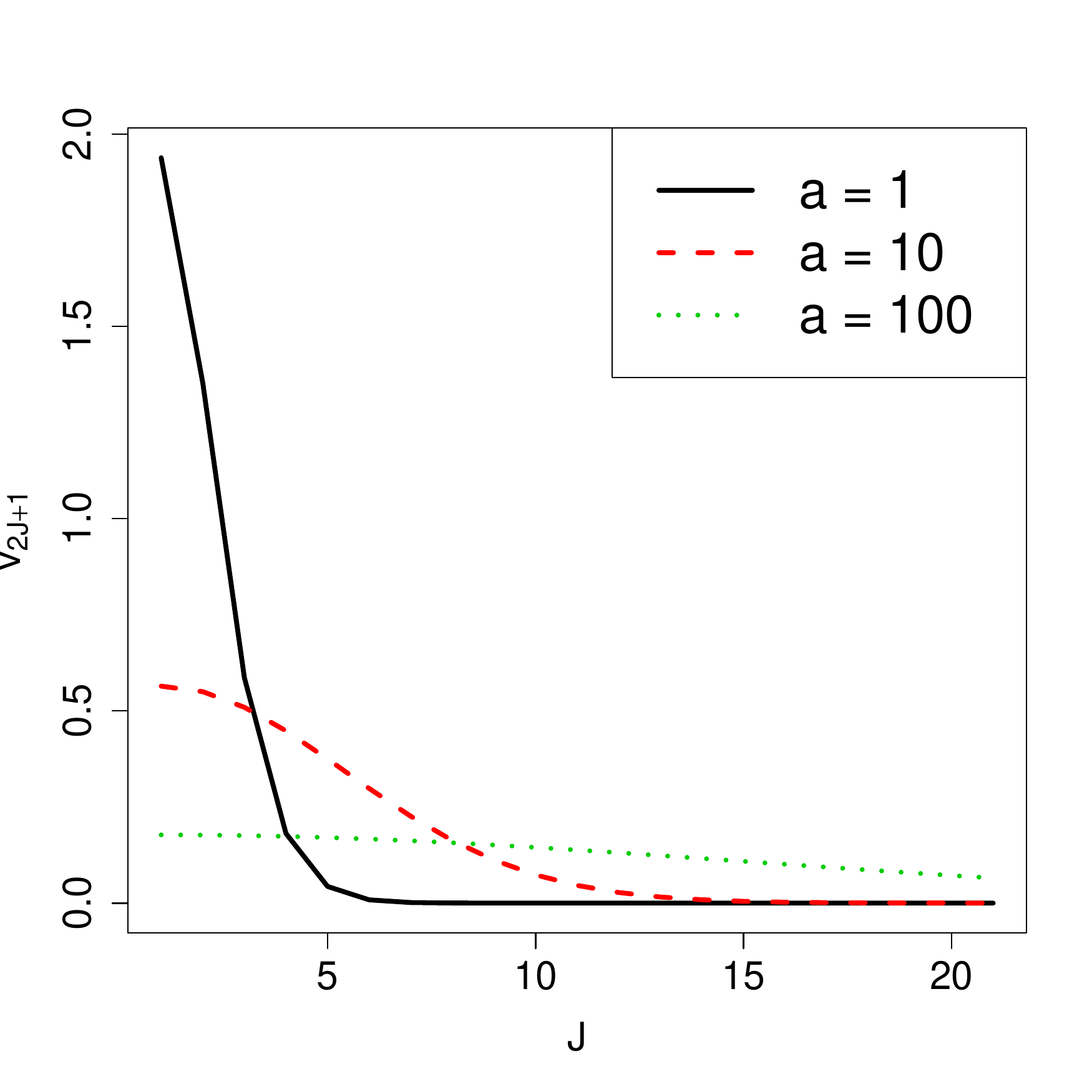}\label{fig:decay}}
\subfloat[]{\includegraphics[width = 0.45\textwidth]{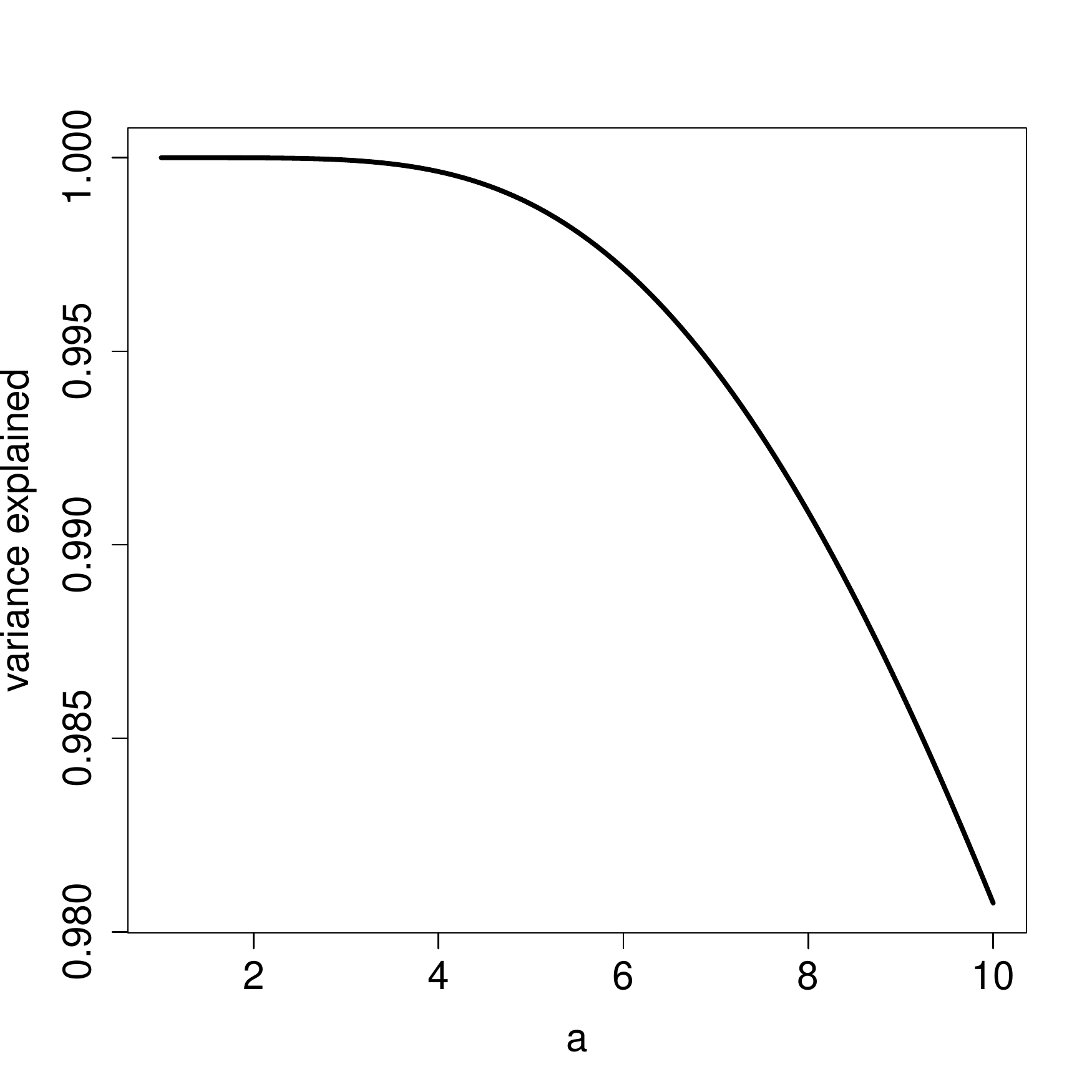} \label{fig:varianceExplained}}
\caption{Decay rate of the eigenvalues of the squared exponential kernel. Figure (a) plots the values of $v_{2J + 1}(a)$ at $J = 0, \ldots, 20$ when $a = 1, 10, 100$.  
	Figure (b) plots the percentage of the variance explained (i.e. ${\rm PVE}_a$) if using the first 21 ($J = 10$) basis functions at different values of $a$. }
\end{figure}

Let $\boldsymbol{\Psi}$ be the $n$ by $L$ matrix with the $k$th column comprising of the evaluations of $\psi_k(\cdot)$ at the components of $\bs{\omega}$, and $\bs{\mu}$ comprising of the evaluations of $\mu(\cdot)$ at the components of $\bs{\omega}$. Then the Gaussian process prior for the boundary curve can be expressed as
\begin{equation}
\bs{\gamma} = \boldsymbol{\Psi} \bs{z} + \bs{\mu}; \quad \bs{z} \sim N(0, \bs{\Sigma}_a/\tau)
\end{equation}
where $\bs{\Sigma}_a =  \mathrm{diag}(v_1(a), \ldots, v_L(a)).$ We use the following priors for the hyper-parameters involved in the covariance kernel: $
\tau \sim \mathrm{Gamma}(500, 1)$ and $
a \sim \mathrm{Gamma}(2, 1)$. For the mean $\mu(\cdot)$, we use a constant 0.1. Note that here we also can use empirical Bayes to estimate the prior mean by any ordinary one dimensional change-point method and an extra step of smoothing. However, our numerical investigation shows that our method is robust in terms of the specification of $\mu(\cdot)$.

The priors for $(\xi, \rho)$ depend on the error distributions. We also need to use the order information between the parameters to keep the two regions distinguishable. We use OIB, OIN, OIG for ordered independent \comment{beta}, normal and gamma distributions respectively. If not specified explicitly, the parameters are assumed to be in a decreasing order. It is easy to see that this convention is for simplicity of notations, and any order between the two region parameters are allowed in practice.
Below we give the conjugate priors for $(\xi, \rho)$ for some commonly used noise distributions:
\begin{itemize}
\item Binary images: the parameters are the probabilities $(\pi_1, \pi_2) \sim \mathrm{OIB}(\alpha_1, \beta_1, \alpha_1, \beta_1)$;
\item Gaussian noise: the parameters are the mean and standard deviation $(\mu_1, \sigma_1, \mu_2, \sigma_2)$ with the priors to be $ (\mu_1, \mu_2) \sim \mathrm{OIN}(\mu_0, \sigma_0^2, \mu_0, \sigma_0^2)$ and $(\sigma_1^{-2}, \sigma_2^{-2}) \sim \mathrm{OIG}(\alpha_2, \beta_2,  \alpha_2, \beta_2)$.
\item Poisson noise: the parameters are the rates $(\lambda_1, \lambda_2) \sim \mathrm{OIG}(\alpha_3, \beta_3, \alpha_3, \beta_3);$
\item Exponential noise: the parameters are the rates $(\lambda_1, \lambda_2) \sim \mathrm{OIG}(\alpha_4, \beta_4, \alpha_4, \beta_4).$
\end{itemize}
In fact, any error distributions with conjugacy properties conditionally on the boundary can be directly used. \comment{We specify the hyper-parameters such that the corresponding prior distributions are spread out.} For example, in the simulation, we use $\alpha_1 = \beta_1 = 0$ for binary images; we use $\mu_0 = \bar{y}, \sigma_0 = 10^3$ and $\alpha_2 = \beta_2 = 10^{-2}$ for Gaussian noise.

We use the slice sampling technique~\citep{Neal:03} within the Gibbs sampler to draw samples from the posterior distribution for $(\bs{z}, \xi, \rho, \tau, a)$. Below is a detailed description of the sampling algorithms for binary images.
\begin{enumerate}
\item Initialize the parameters to be $\bs{z} = \bs{0}, \tau = 500$ and $a = 1$. The parameters $(\xi, \rho) = (\pi_1, \pi_2)$ are initialized by the maximum likelihood estimates (MLE) given the boundary to be $\mu(\cdot)$.
\item $\bs{z} | (\pi_1, \pi_2, \tau, a, \bs{Y})$ : the conditional posterior density of $\bs{z}$ (in a logarithmic scale and up to an additive constant) is equal to
\begin{eqnarray*}
 \lefteqn{\sum_{i \in I_1} \log f(Y_i; \pi_1) + \sum_{i \in I_2} \log f(Y_i; \pi_2) -\frac{\tau \bs{z}^T
\bs{\Sigma}_{a}^{-1} \bs{z}}{2} }
\\ && =  N_1 \log \frac{\pi_1 (1 - \pi_2)}{\pi_2(1 - \pi_1)}
+ n_1 \log \frac{1 - \pi_1}{1 - \pi_2} - \frac{\tau \bs{z}^T
\bs{\Sigma}_{a}^{-1} \bs{z}}{2},
\end{eqnarray*}
where $n_1 = \sum_i \Ind({r}_i < {\bs \gamma}_i)$ and $N_1 = \sum_i \Ind({r}_i < {\bs{\gamma}}_i)  Y_i$. We use slice sampling one-coordinate-at-a-time for this step.
\item $\tau | (\bs{z}, a) \sim \mathrm{Gamma}(a^*, b^*)$, where $a^* = a + L/2$ and $b^* = b + \bs{z}^T \bs{\Sigma}_{a}^{-1} \bs{z}/2$;
\item $(\pi_1, \pi_2) | (\bs{z}, \bs{Y}) \sim \mathrm{OIB}(\alpha_1 + N_1,
\beta_1 + n_1 - N_1, \alpha_1 + N_2, \beta_1 + n_2 - N_2)$, where $N_2$ is the count of 1's outside $\gamma$ and $n_2$ is the number of observations outside $\gamma$.
\item $a|\bs{z}, \tau$: use slice sampling by noting that the conditional posterior density of $a$ (in a logarithmic scale and up to an additive constant) is equal to
\begin{equation}
-\frac{\log |\bs{\Sigma}_{a}|}{2}  - \frac{\tau \bs{z}^T \bs{\Sigma}_{a}^{-1} \bs{z}}{2}  + \log \frac{a}{ e^{a}} = -\sum_{k = 1}^{L} \frac{\log v_k(a) }{2} -  \sum_{k = 1}^{L} \frac{\tau z_k^2}{2 v_k(a)}  + \log a - a.
\end{equation}

\end{enumerate}

The above algorithm is generic beyond binary images. For other noise distributions, the update of $\tau$ and $a$ are the same. The update of $\bs{z}$ and $(\xi, \rho)$ in Step 2 and Step 4 will be changed using the corresponding priors and conjugacy properties. For example, for Gaussian noise, the parameters $(\xi, \rho)$ are $(\mu_1, \sigma_1, \mu_2, \sigma_2)$, and the conditional posterior density (in the logarithmic scale and up to an additive constant) used in Step 2 is changed to
	\begin{equation}
	- n_1 (\log \sigma_1 - \log \sigma_2) -  \sum_{i \in I_1} \frac{(y_i - \mu_1)^2}{2 \sigma_1^2} -  \sum_{i \in I_2} \frac{(y_i - \mu_2)^2}{2 \sigma_2^2}  - \frac{\tau \bs{z}^T
	\bm{\Sigma}_{a}^{-1} \bs{z}}{2}.
	\end{equation}
For Step 4, the conjugacy is changed to $$(\mu_1, \mu_2)|(\bs{z}, \sigma_1, \sigma_2, \bs{Y}) \sim \mathrm{OIN}, \quad (\sigma_1^{-2}, \sigma_2^{-2})|(\bs{z}, \mu_1, \mu_2, \bs{Y}) \sim \mathrm{OIG}.$$
Similarly, it is straightforward to apply this algorithm to images with Poisson noise, exponential noise or other families of distributions with ordered conjugate prior.

\section{Simulations}
\label{section:simulation}
\subsection{Numerical results for binary images}
We use jitteredly random design for locations $(\bs{\omega}, \bs{r})$ and three cases for boundary curves:
\begin{itemize}
\item Case B1. Ellipse given by   
$
r(\omega) = {b_1 b_2}/{\sqrt{(b_2 \cos \omega)^2 + (b_1 \sin \omega)^2}},
$
where $b_1 \geq b_2$ and $\omega$ is the angular coordinate measured from the major axis. We set $b_1 = 0.35$ and $b_2 = 0.25$.

\item Case B2. Ellipse with shift and rotation: centered at (0.1, 0.1) and rotated by $60^{\circ}$ counterclockwise. We use this setting to investigate the influence of the specification of the reference point.

\item Case B3. Regular triangle centered at the origin with the height to be 0.5. We use this setting to investigate the performance of our method when the true boundary is not smooth at some points.
\end{itemize}
We keep using $\pi_2 = 0.2$ and vary the values of $\pi_1$ to be $(0.5, 0.25)$. For each combination of $(\pi_1, \pi_2)$, the observed image is $m \times m$ where $m = 100, 500$ (therefore the total number of observations is $n = m^2$). The MCMC procedure is iterated 5,000 times after 1,000 steps burn-in period. For the estimates, we calculate the Lebesgue error (area of mismatched regions) between the estimates and the true boundary. For the proposed Bayesian approach, we use the posterior mean as the estimate and construct a variable-width uniform credible band. Specifically, let $\{{\gamma}_i(\omega)\}_{1000}^{5000}$ be the posterior samples and $(\widehat{\gamma}(\omega), \widehat{s}(\omega))$ 
be the posterior mean and standard deviation functions derived from $\{{\gamma}_i(\omega)\}$. For each MCMC run, we calculate the distance $u_i = \|(\gamma_i-\widehat{\gamma})/s\|_\infty=\sup_\omega\{ |\gamma_i(\omega) - \widehat{\gamma}(\omega)| /\widehat{s}(\omega) \}$ and obtain the 95th percentile of all the $u_i$'s, denoted as $L_0$. Then a 95\% uniform credible band is given by $[\widehat{\gamma}(\omega) - L_0 \widehat{s}(\omega), \widehat{\gamma}(\omega) + L_0 \widehat{s}(\omega)]$.

We compare the proposed approach with a maximum contrast estimator (MCE) which first detect boundary pixels followed by a post-smoothing via a penalized Fourier regression. In the 1-dimensional case, the MCE selects the location which maximizes the differences of the parameter estimates at the two sides, which is similar to many pixel boundary detection algorithms discussed in~\citep{Qiu:05}.
In images, for a selected number of angles (say 1000 equal-spaced angles from 0 to $2\pi$), we choose the neighboring bands around each angle and apply MCE to obtain the estimated radius and then smooth those estimates via a penalized Fourier regression. Note that unlike the proposed Bayesian approach, a joint confidence band is not conveniently obtainable for the method of MCE, due to its usage of a two-step procedure.

As indicated by Table~\ref{table:numerical}, the proposed Bayesian method has Lebesgue errors typically less than $2.5\%$. In addition, the proposed method outperforms the benchmark method MCE significantly. We also observe that the MCE method is highly affected by the number of basis functions; in contrast, the proposed method adapts to the smoothness level automatically.  The comparison between Case B1 and Case B2 shows that the specification of the reference point will not influence the performance of our methods since the differences are not significant compared to the standard error.

\begin{table}[ht]
\centering
\caption{Lebesgue errors ($\times 10^{-2}$) of the methods based on $100$ simulations. The standard errors are reported below in the parentheses.}
\label{table:numerical}
\begin{tabular}{l|ccc|ccc}
  \hline
 & \multicolumn{3}{r|}{$m = 100, (\pi_1, \pi_2 ) = (0.50, 0.20)$} & \multicolumn{3}{r}{$m = 100, (\pi_1, \pi_2 ) = (0.25, 0.20)$} \\ \hline
& Case B1 & Case B2 & Case B3 & Case B1 & Case B2 & Case B3  \\ \hline
Bayesian method & 0.64 & 0.67 & 2.26 & 0.71 & 0.8 & 2.36 \\
 & (0.02) & (0.02) & (0.02) & (0.03) & (0.03) & (0.03) \\
MCE with 5 bases & 6.57 & 6.58 & 6.03 & 6.39 & 10.09 & 7.03 \\
   & (0.25) & (0.21) & (0.07) & (0.19) & (0.20) & (0.11) \\
MCE with 31 bases & 8.75 & 7.84 & 5.96 & 9.19 & 11.8 & 7.86 \\
   & (0.18) & (0.19) & (0.10) & (0.16) & (0.19) & (0.14) \\ \hline
\end{tabular}
\end{table}

Figures~\ref{fig:ellipse} and~\ref{fig:triangle} confirm the superior performance of the proposed method compared with the smoothed MCE method with 5 and 31 basis functions when the true boundary curve is an ellipse, an ellipse with shift and rotation and a triangle. Even in the case of $\pi_1 = 0.25$ where the contrast at two sides of the boundary is small, the proposed method is still  able to capture the boundary when $m = 500$. This observation is consistent with the result derived from the infill asymptotics \comment{when the number of data points increase within images of fixed size}. In addition, we also obtain joint credible bands using the samples drawn from the joint posterior distribution. \comment{Figure~\ref{fig:trace} plots the trace plots of $(a, \pi_1, \pi_2)$ and the histogram of $a$ to illustrate the mixing and convergence of the posterior samples for Case B1 when $m = 500$ and the true parameters $(\pi_1, \pi_2) = (0.25, 0.20)$.  Similar plots are obtained for other scenarios but are not presented here. }

\begin{figure}
\captionsetup[subfloat]{farskip=0pt,captionskip=0pt}
\subfloat[Case B1: $m = 100, \pi_1 = 0.50$]{\includegraphics[width = 0.25\textwidth]{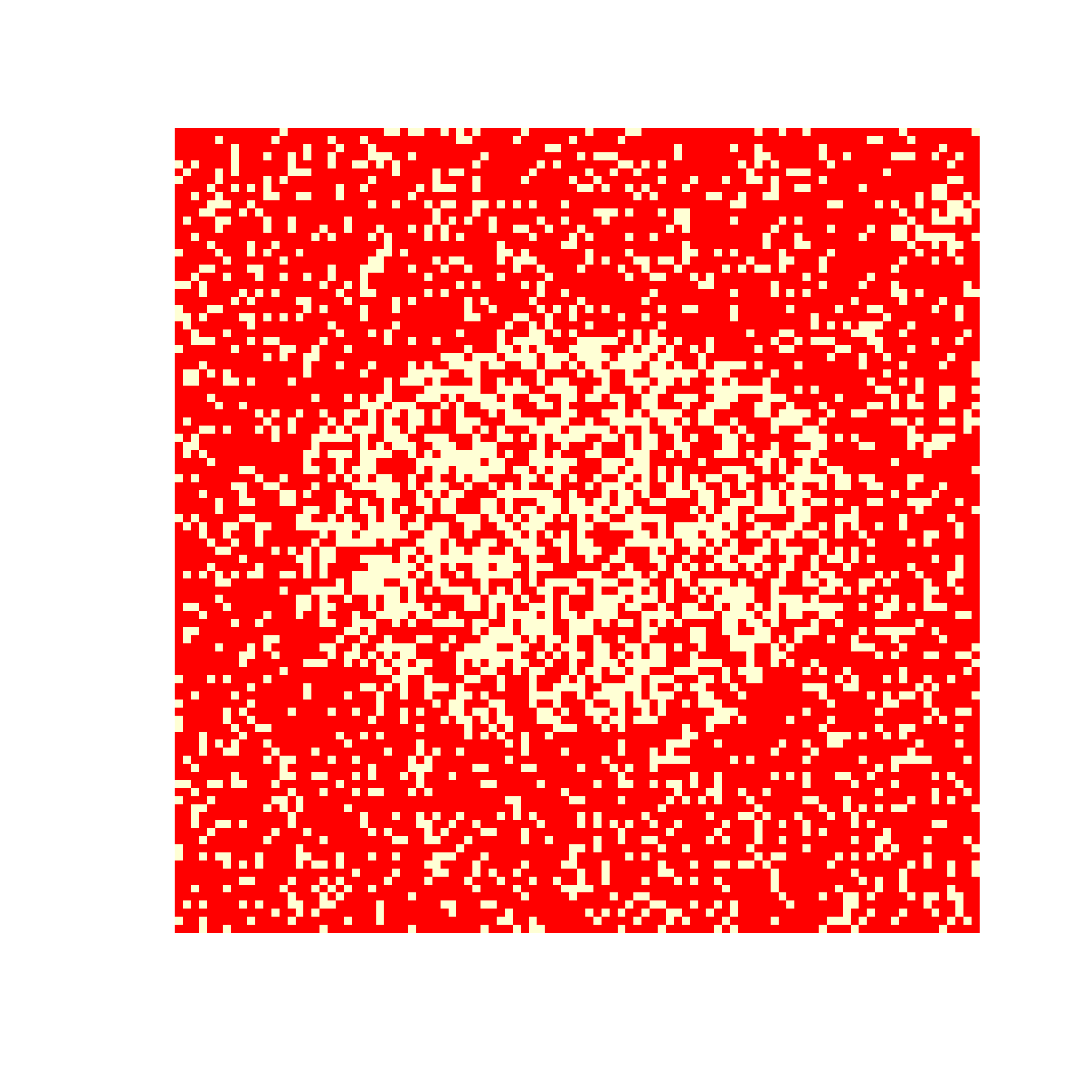}}
\subfloat[Bayesian Est.]{\includegraphics[width = 0.25\textwidth]{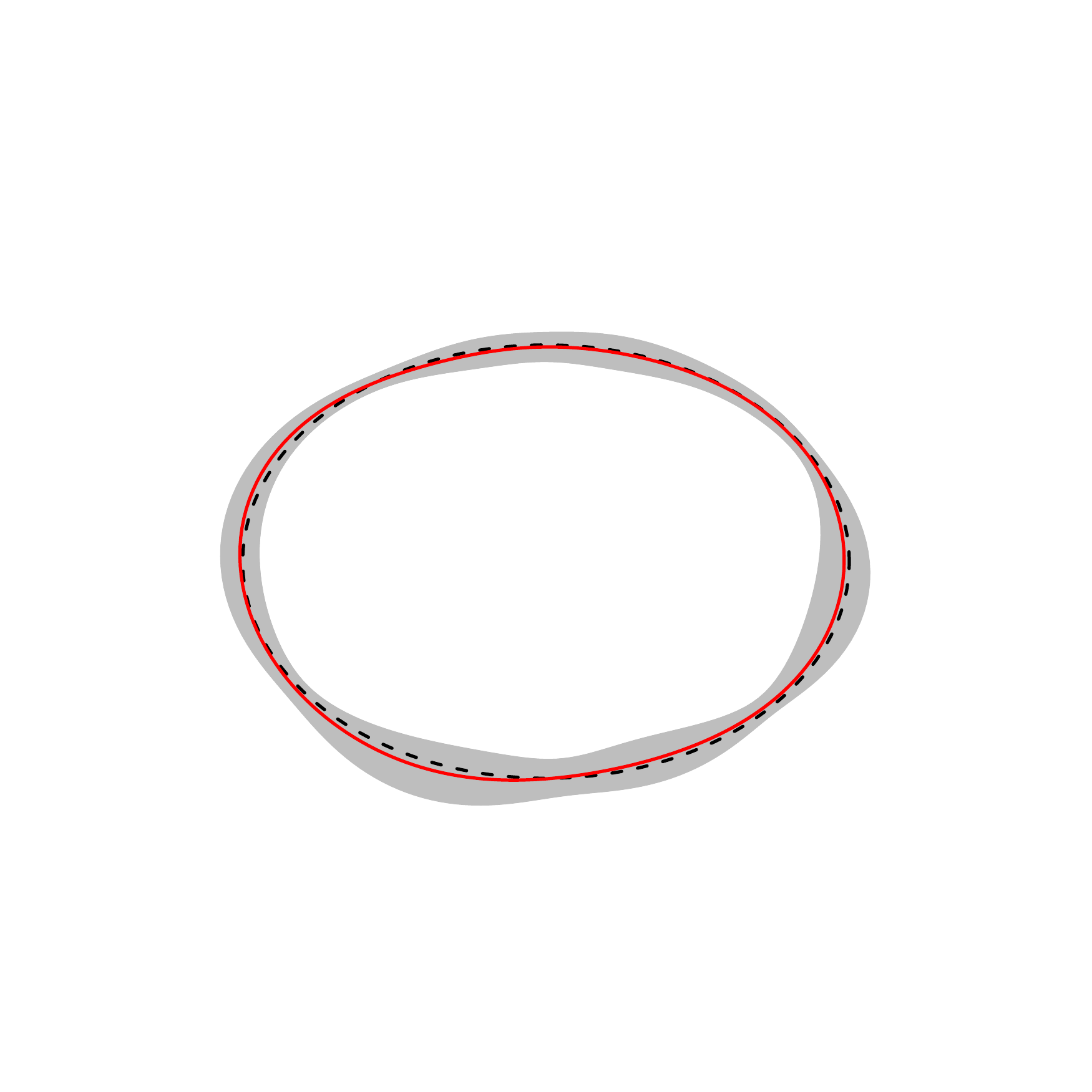}}\subfloat[MCE (5 basis)]{\includegraphics[width = 0.25\textwidth]{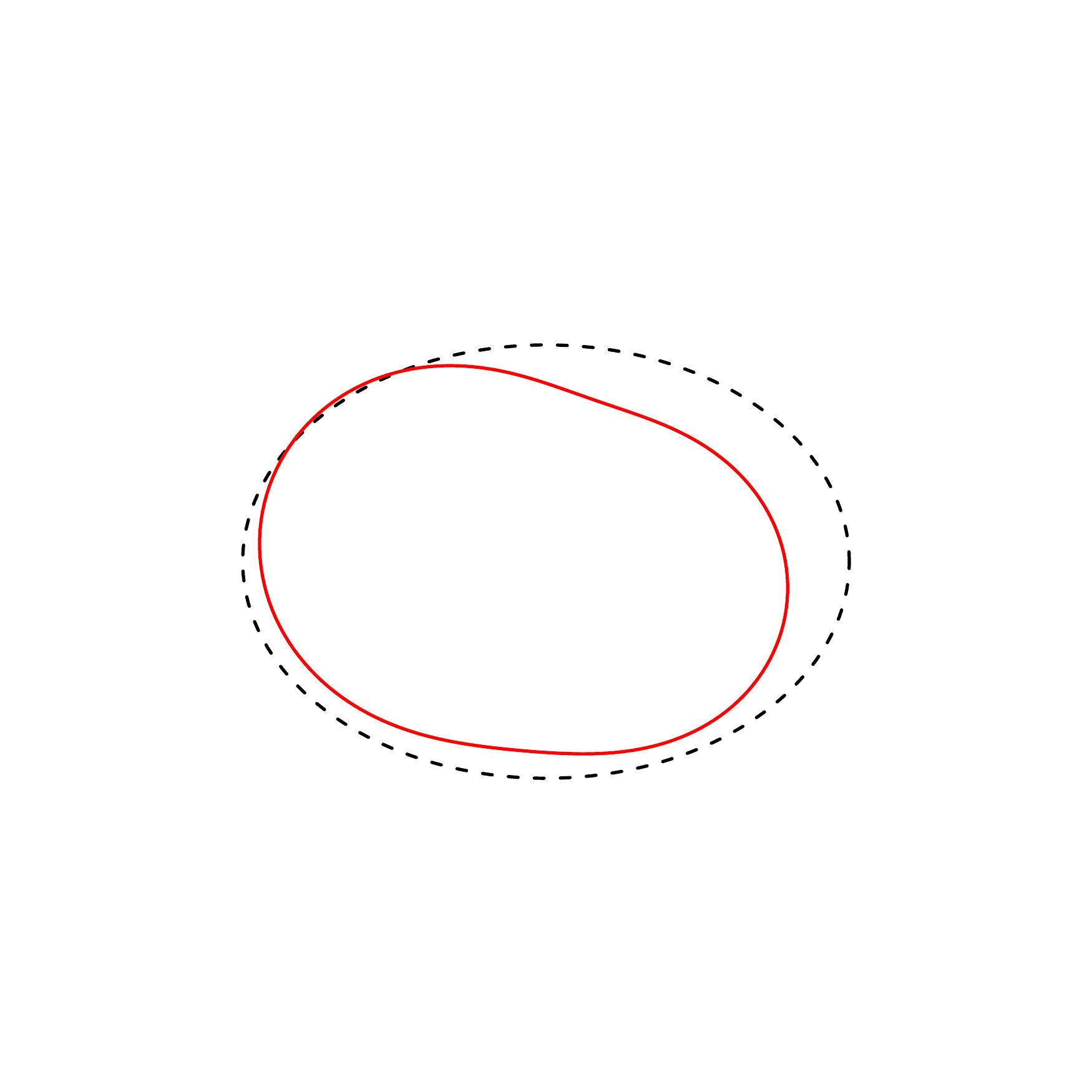}}
\subfloat[MCE (31 basis)]{\includegraphics[width = 0.25\textwidth]{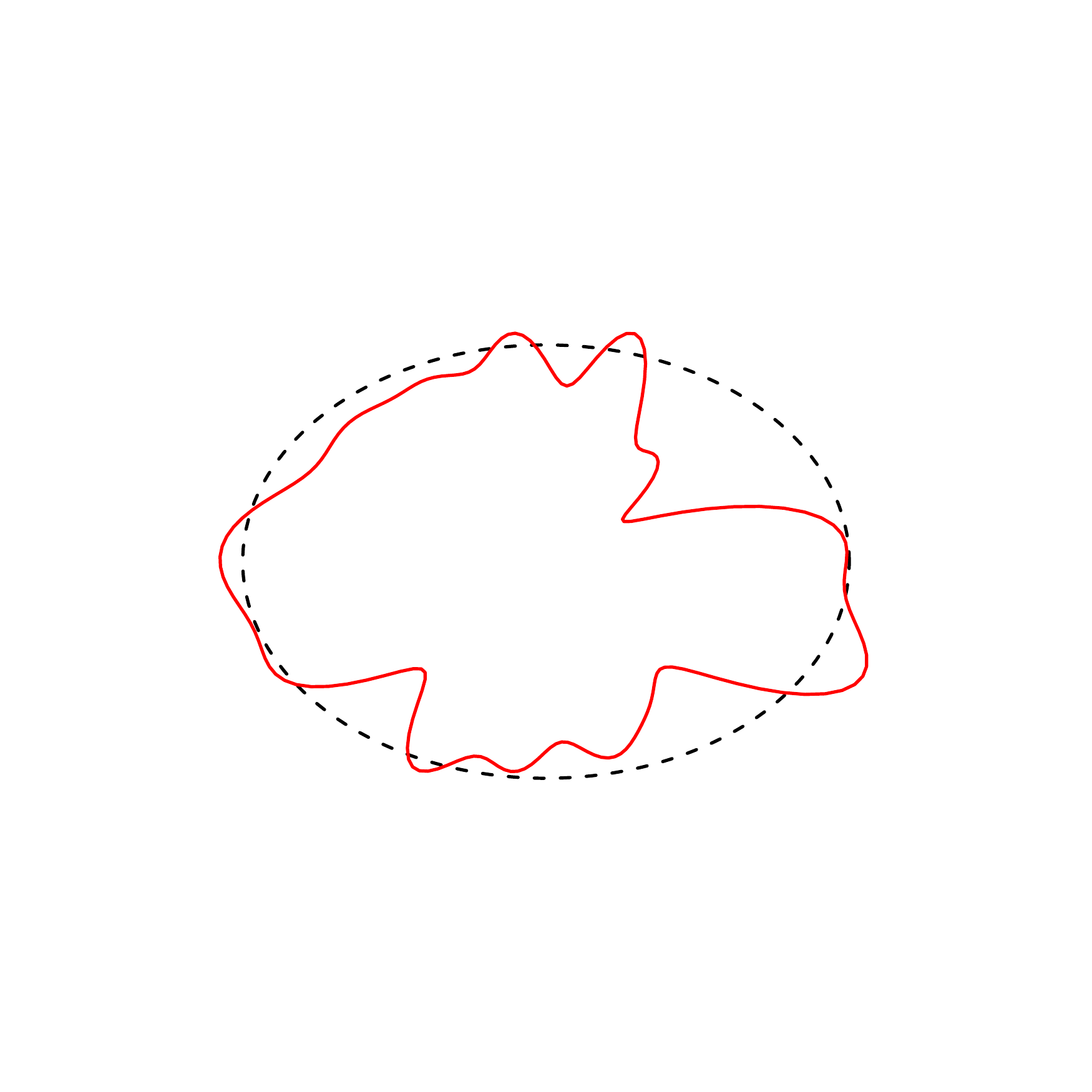}} \\
\subfloat[Case B1: $m = 500$, $\pi_1 = 0.25$]{\includegraphics[width = 0.25\textwidth]{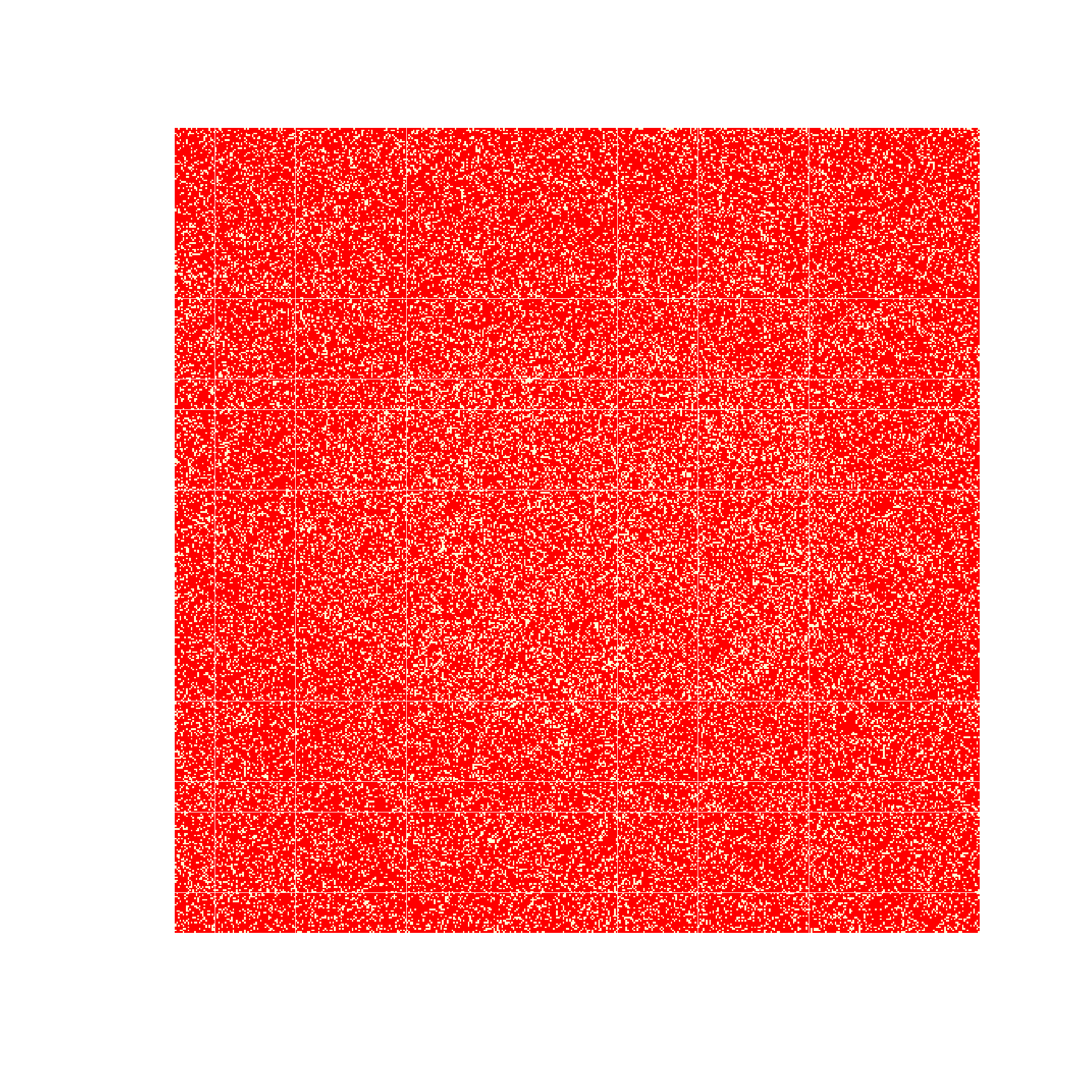}}
\subfloat[Bayesian Est.]{\includegraphics[width = 0.25\textwidth]{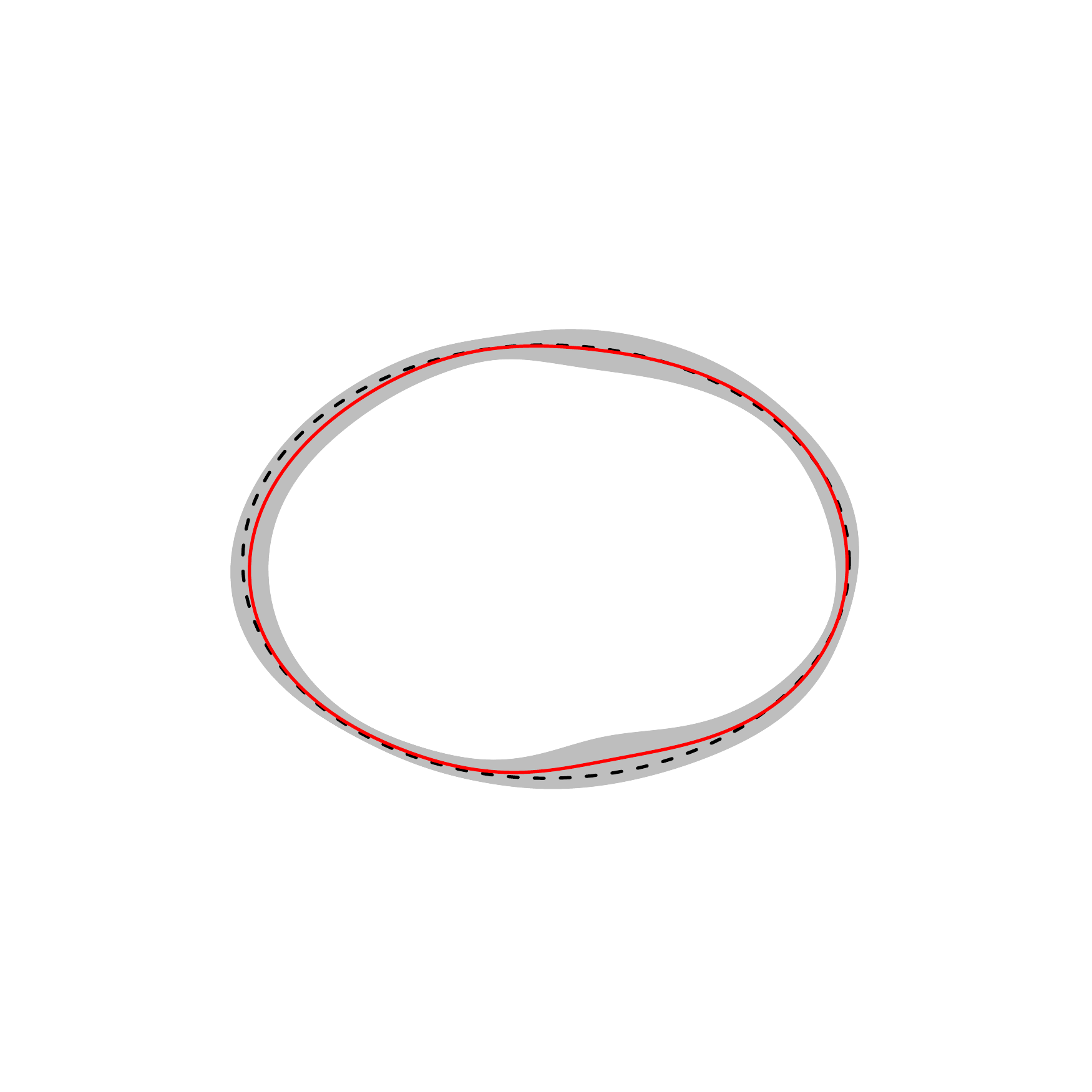}}\subfloat[MCE (5 basis)]{\includegraphics[width = 0.25\textwidth]{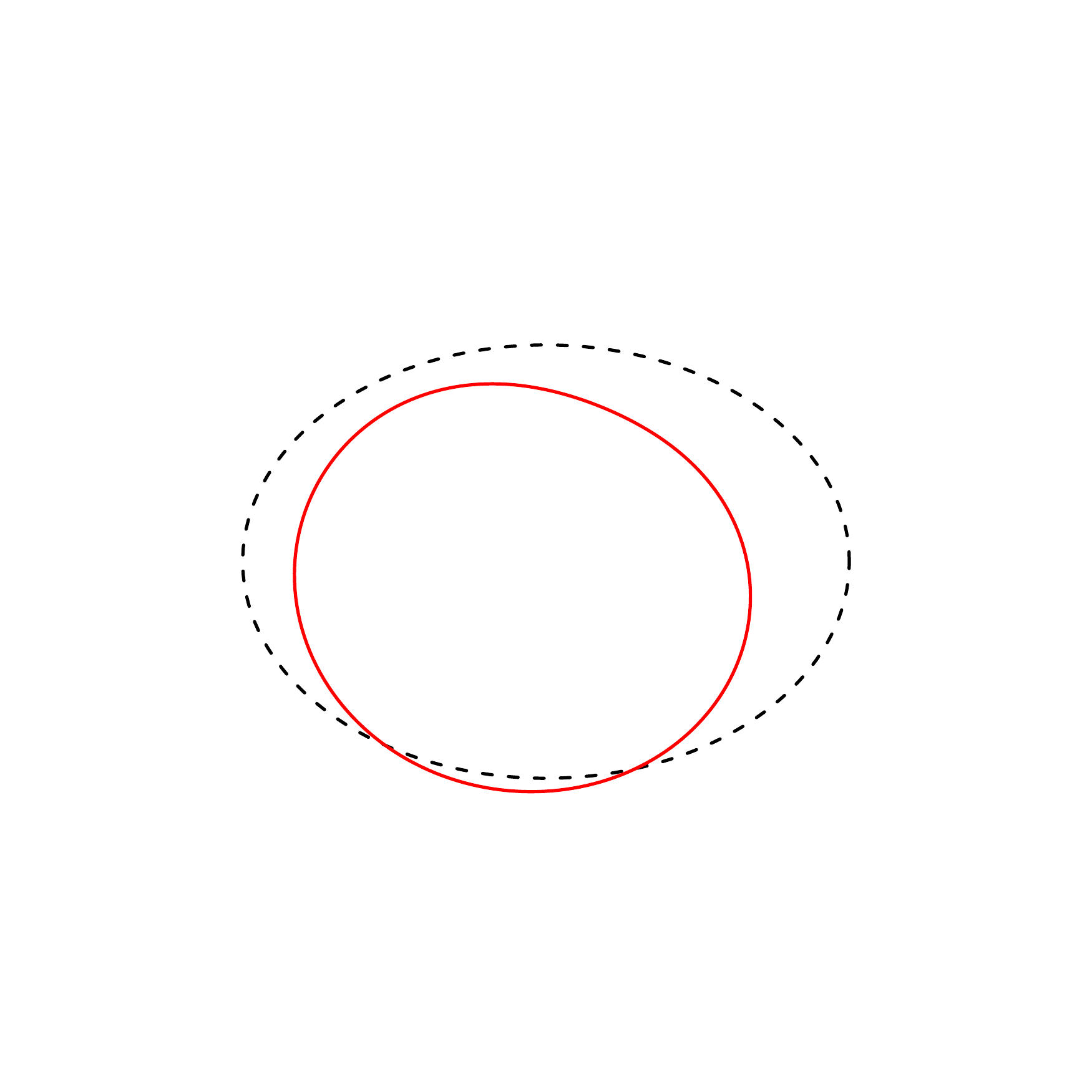}}
\subfloat[MCE (31 basis)]{\includegraphics[width = 0.25\textwidth]{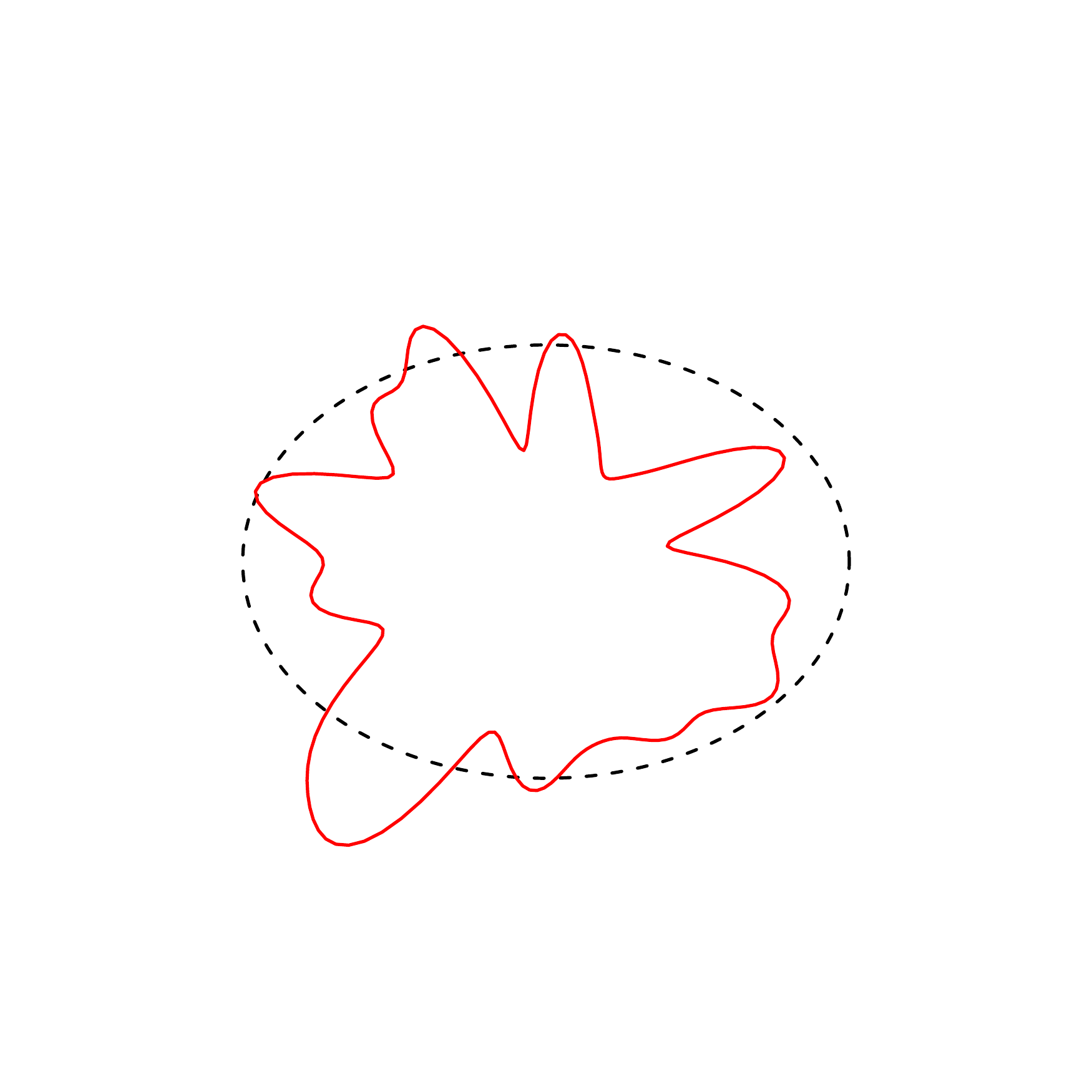}} \\
\subfloat[Case B2: $m = 100$, $\pi_1 = 0.50$]{\includegraphics[width = 0.25\textwidth]{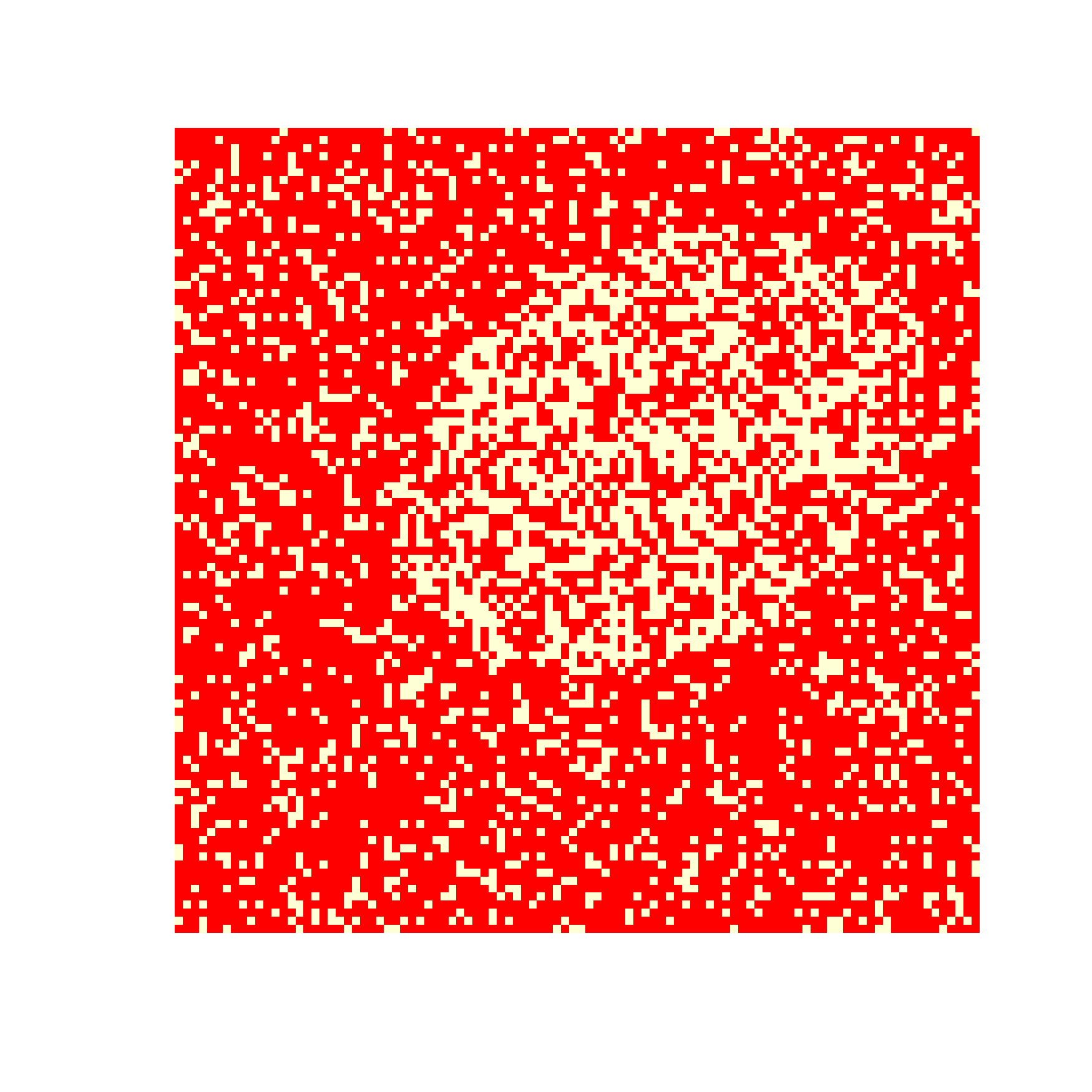}}
\subfloat[Bayesian Est.]{\includegraphics[width = 0.25\textwidth]{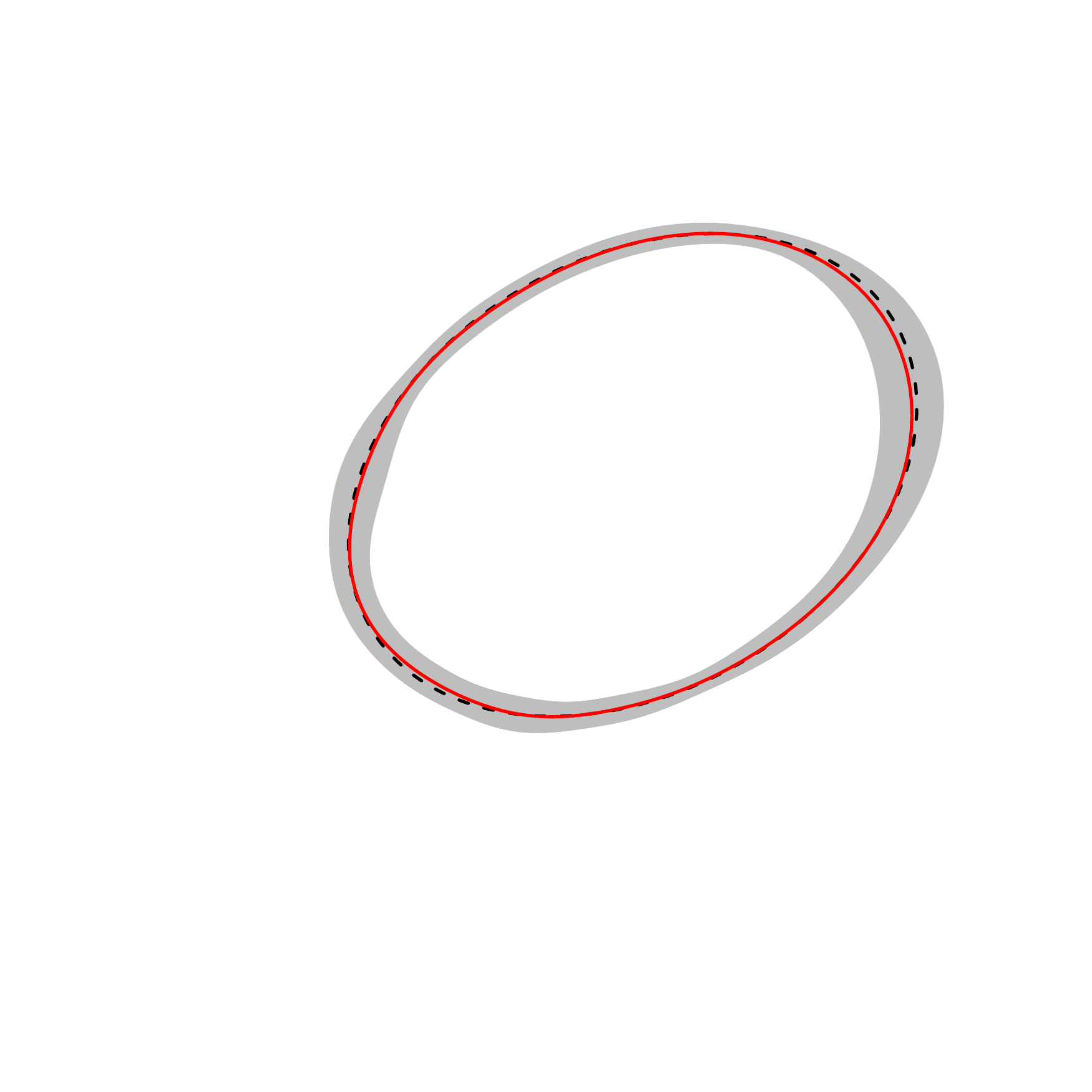}}\subfloat[MCE (5 basis)]{\includegraphics[width = 0.25\textwidth]{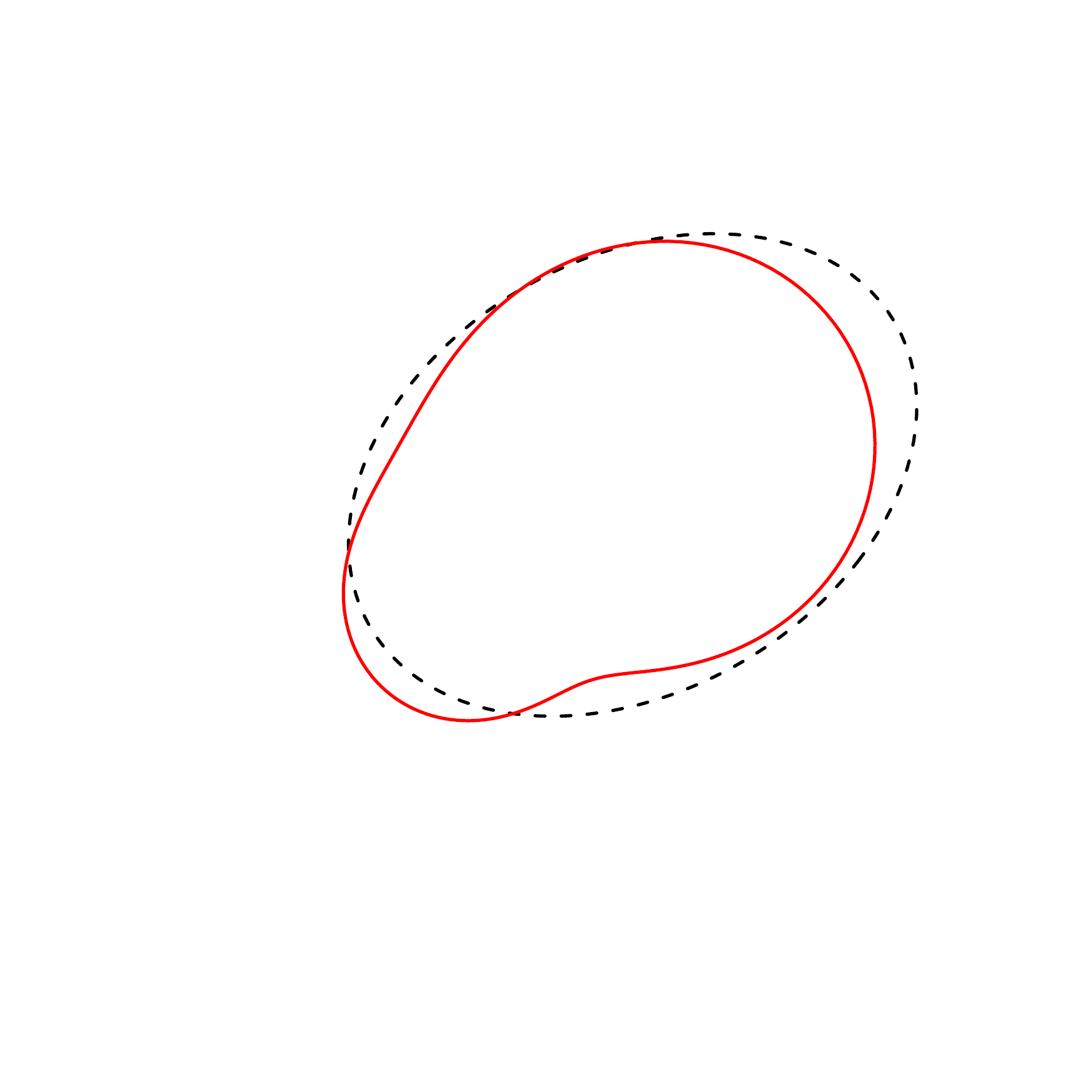}}
\subfloat[MCE (31 basis)]{\includegraphics[width = 0.25\textwidth]{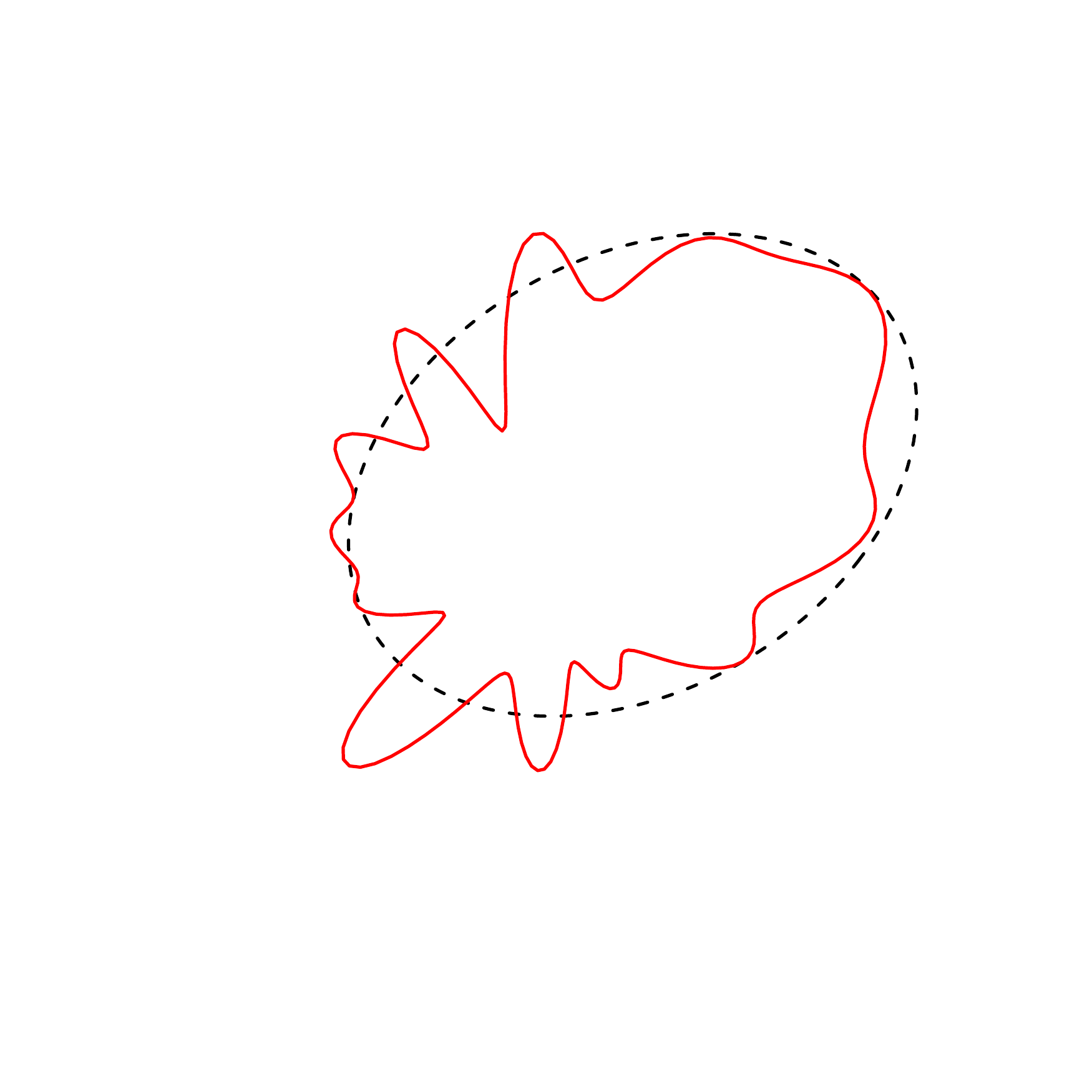}} \\
\subfloat[Case B2: $m = 500$, $\pi_1 = 0.25$]{\includegraphics[width = 0.25\textwidth]{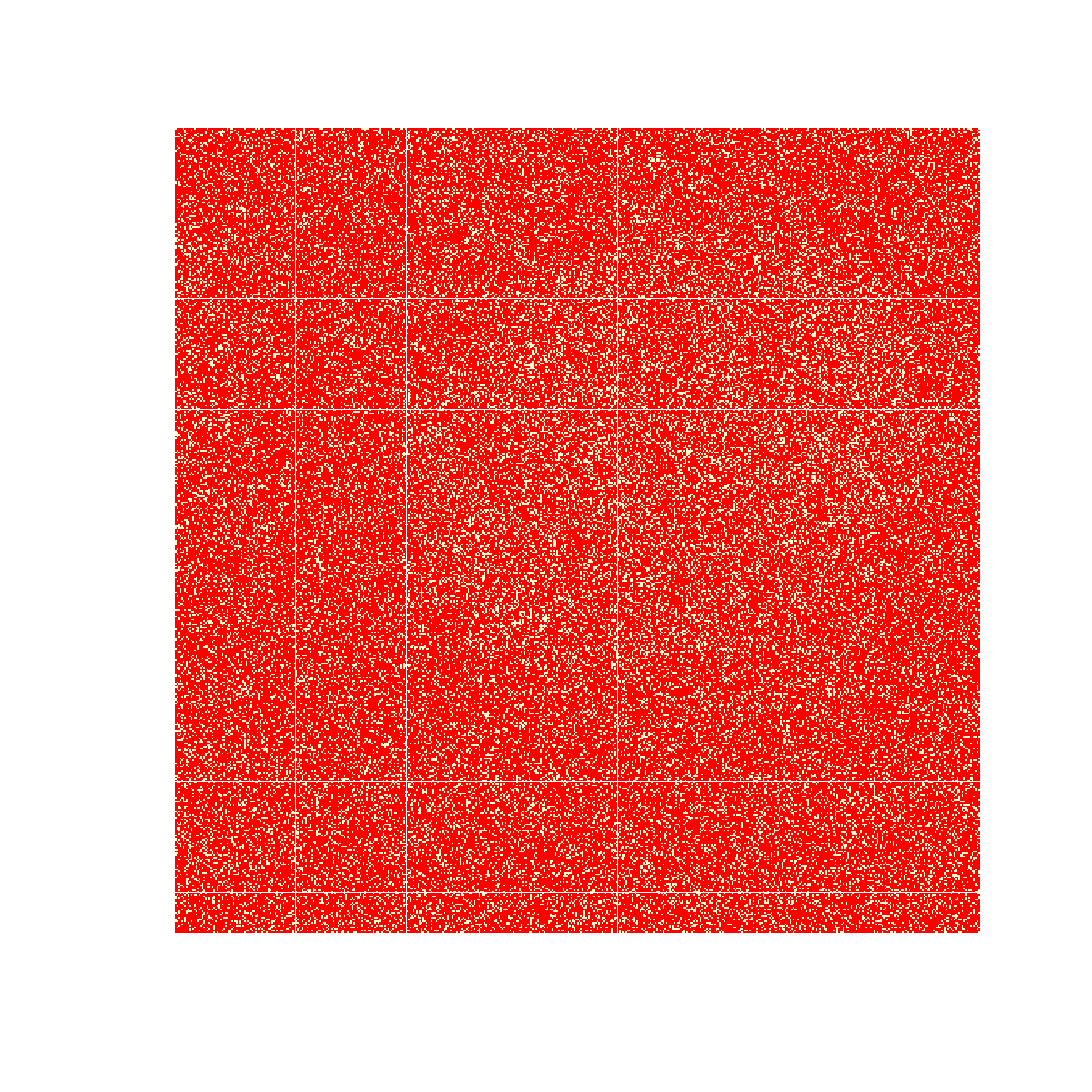}}
\subfloat[Bayesian Est.]{\includegraphics[width = 0.25\textwidth]{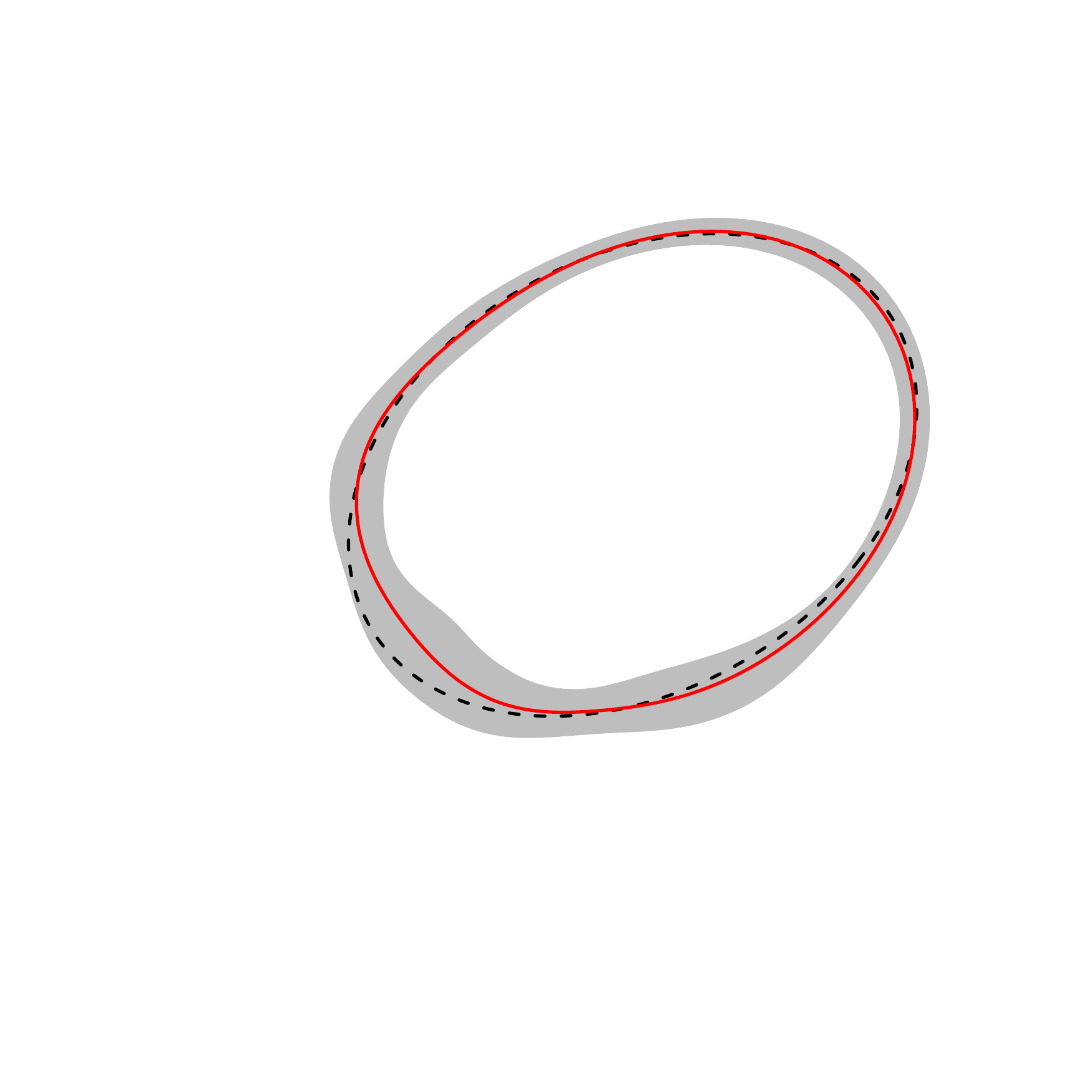}}\subfloat[MCE (5 basis)]{\includegraphics[width = 0.25\textwidth]{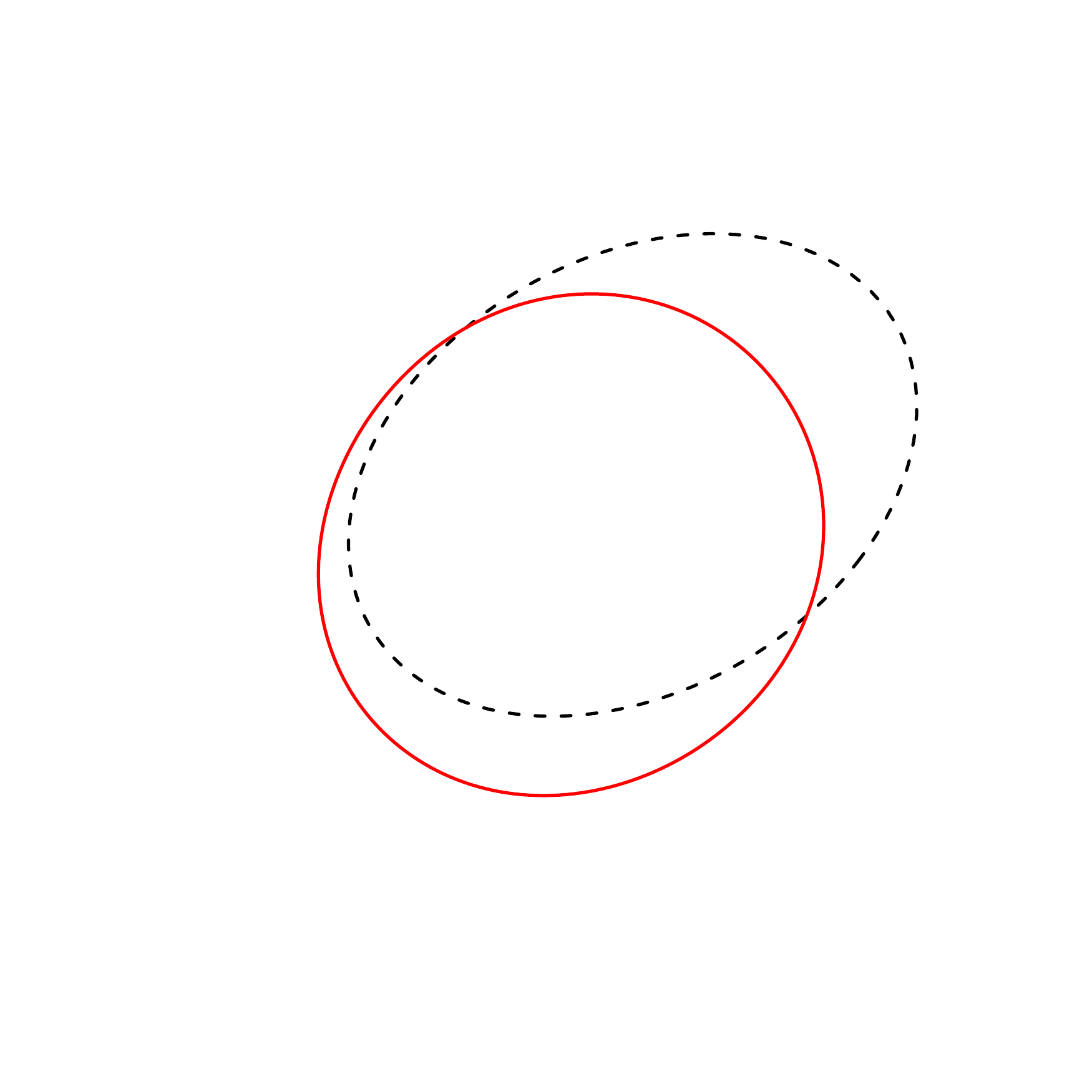}}
\subfloat[MCE (31 basis)]{\includegraphics[width = 0.25\textwidth]{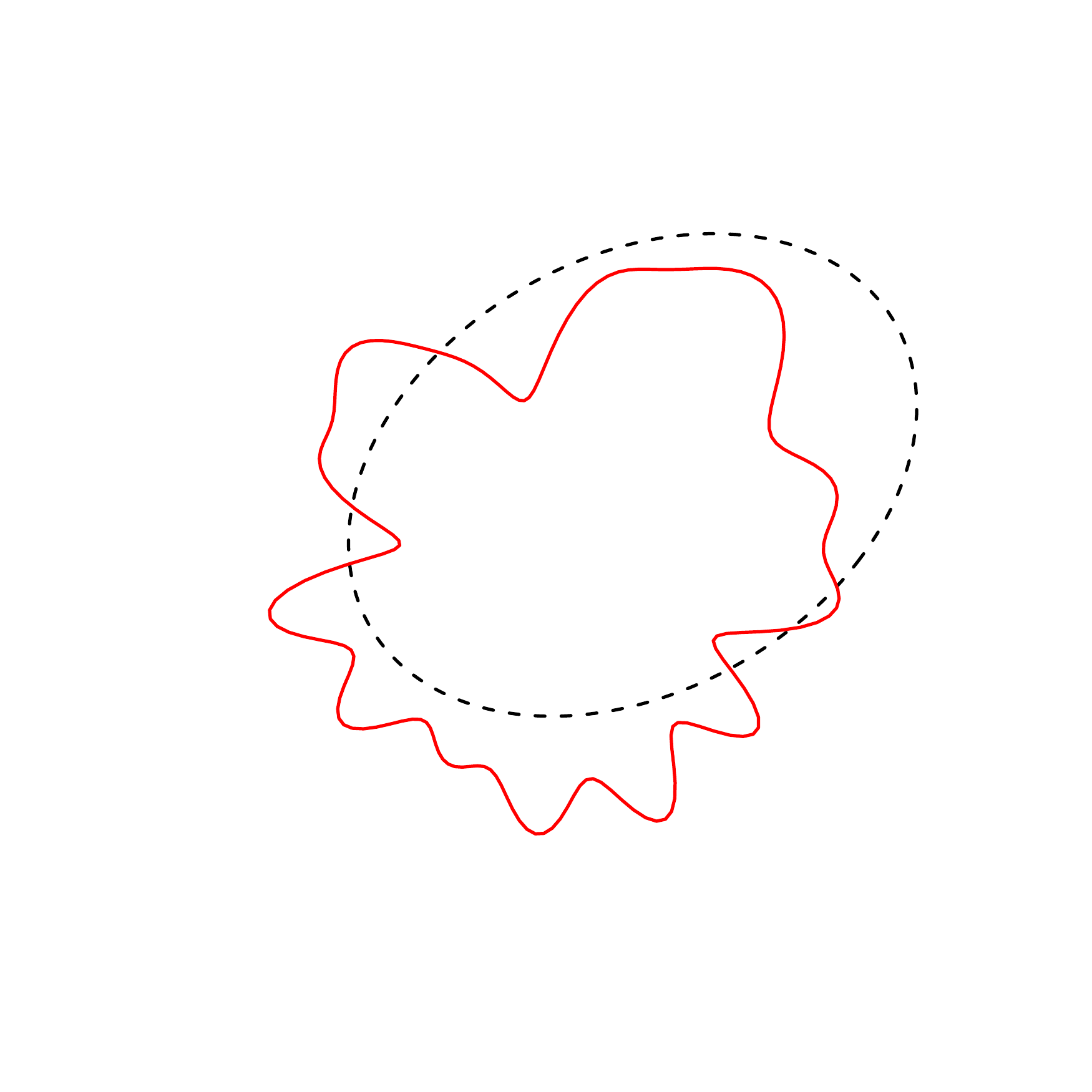}} \\
\caption{Performance on binary images (Column 1) with elliptic boundary. Column 2--4 plot the estimate (solid line in red) against the true boundary (dotted line in black). A 95\% uniform credible band (in gray) is provided for the Bayesian estimate (Column 2). }
\label{fig:ellipse}
\end{figure}

\begin{figure}
\subfloat[Case B3: $m = 100$, $\pi_1 = 0.50$]{\includegraphics[width = 0.25\textwidth]{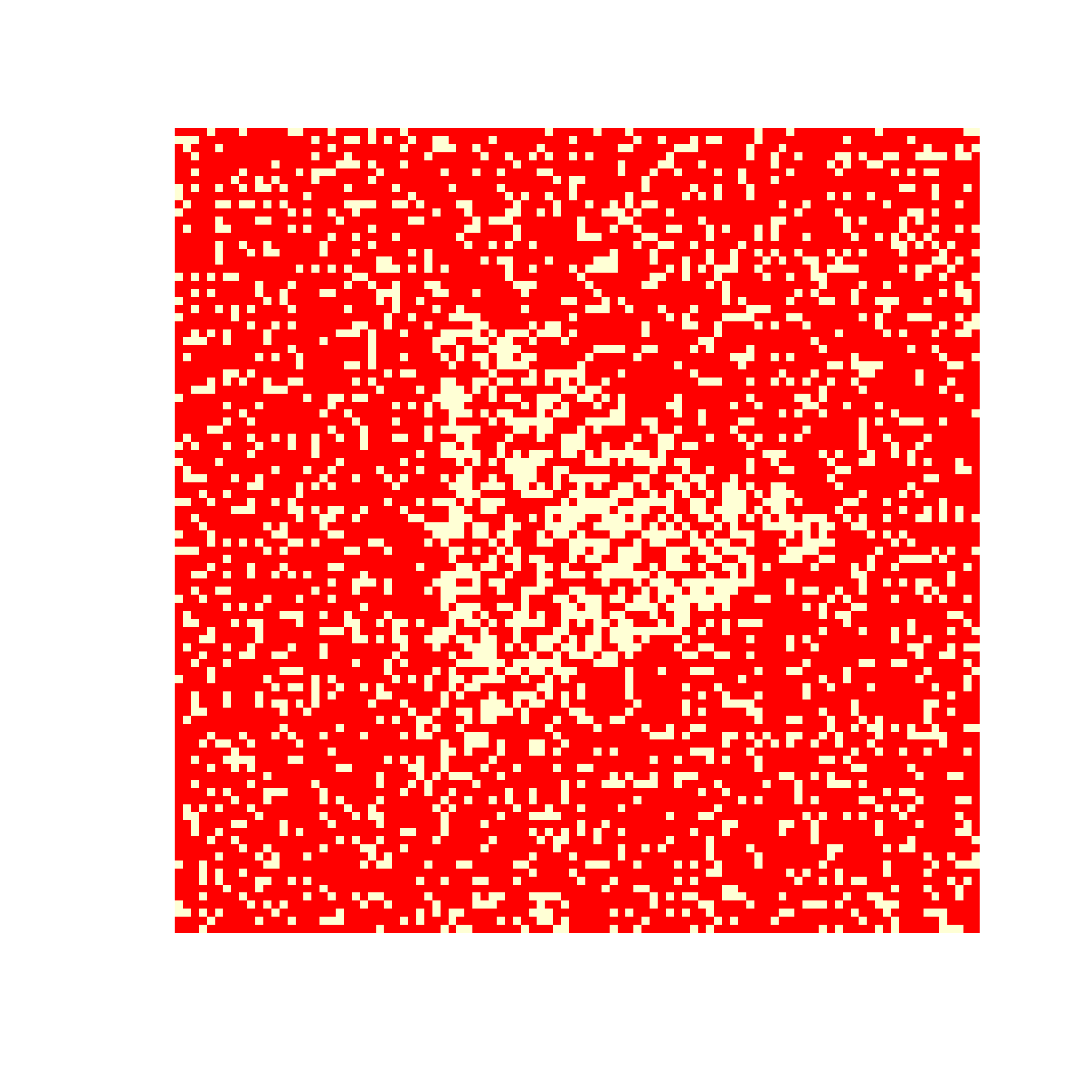}}
\subfloat[Bayesian Est.]{\includegraphics[width = 0.25\textwidth]{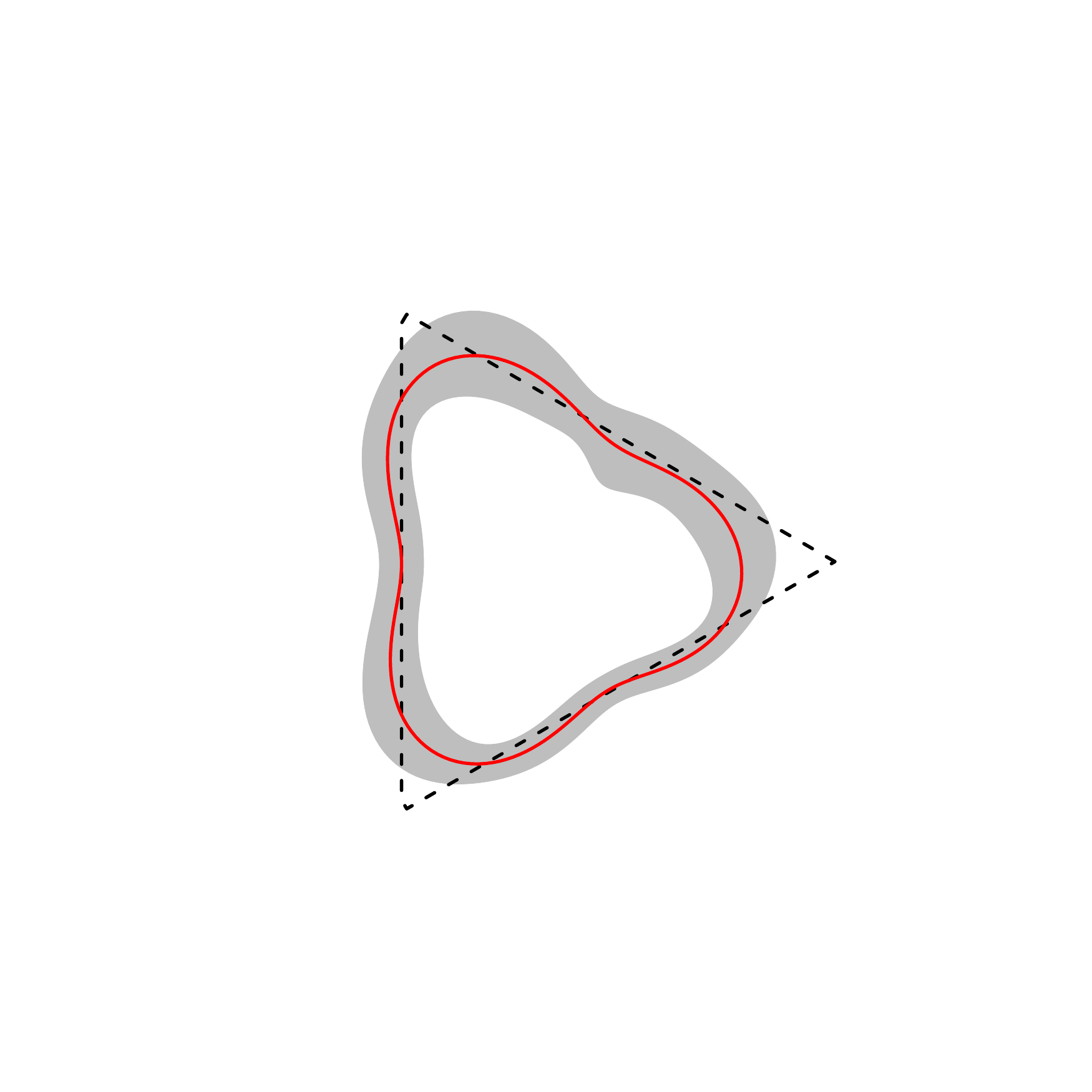}}\subfloat[MCE (5 basis)]{\includegraphics[width = 0.25\textwidth]{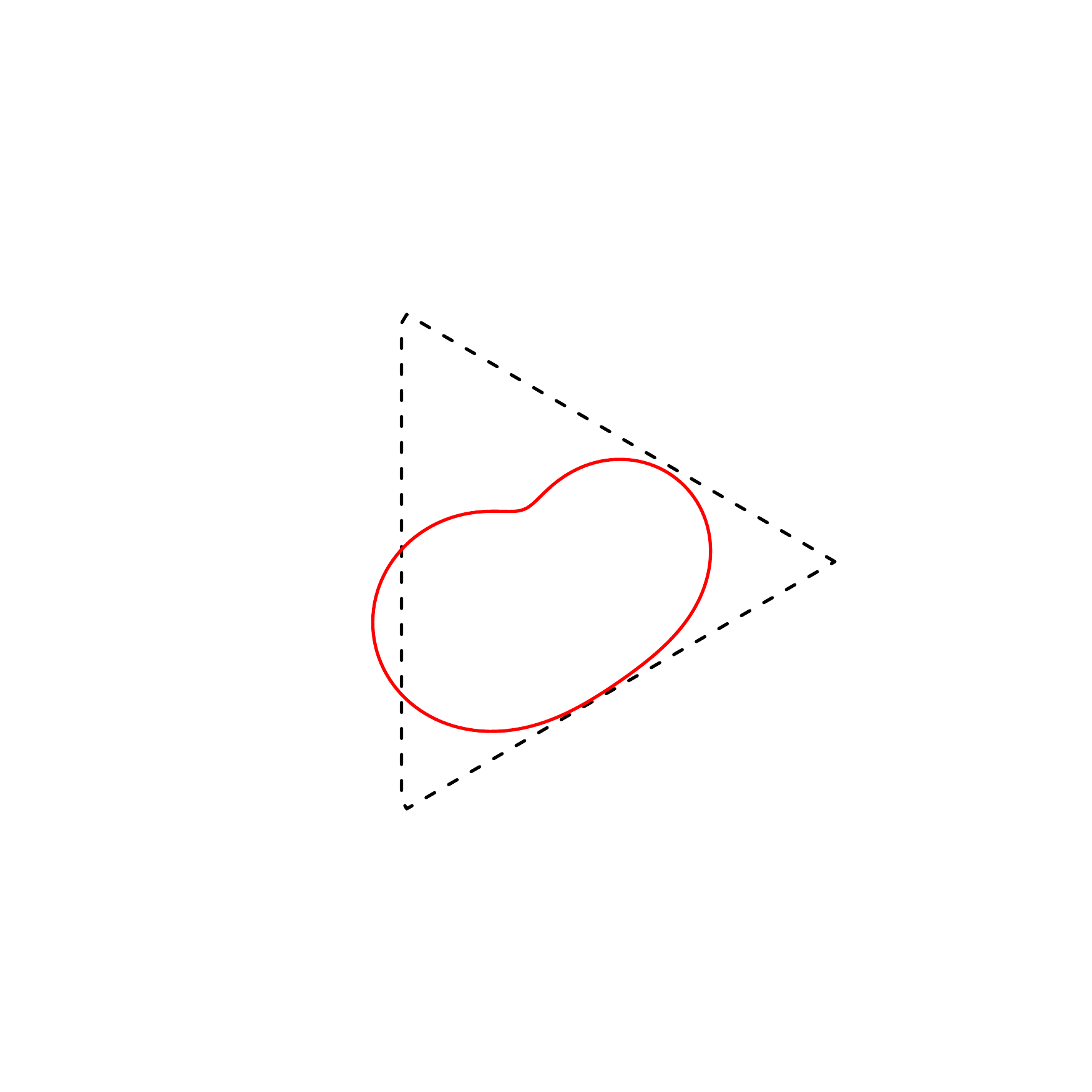}}
\subfloat[MCE (31 basis)]{\includegraphics[width = 0.25\textwidth]{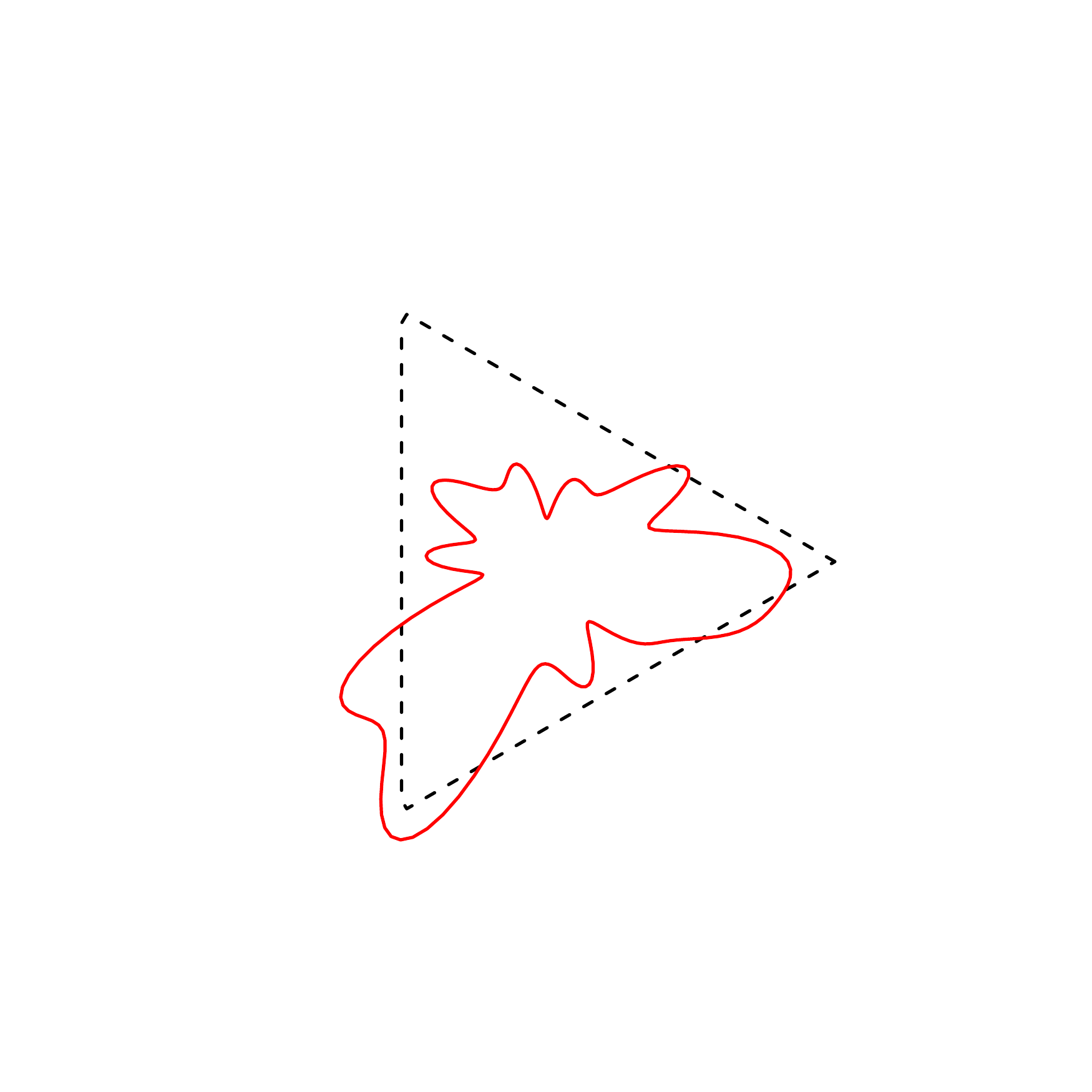}} \\
\subfloat[Case B3: $m = 500$, $\pi_1 = 0.25$]{\includegraphics[width = 0.25\textwidth]{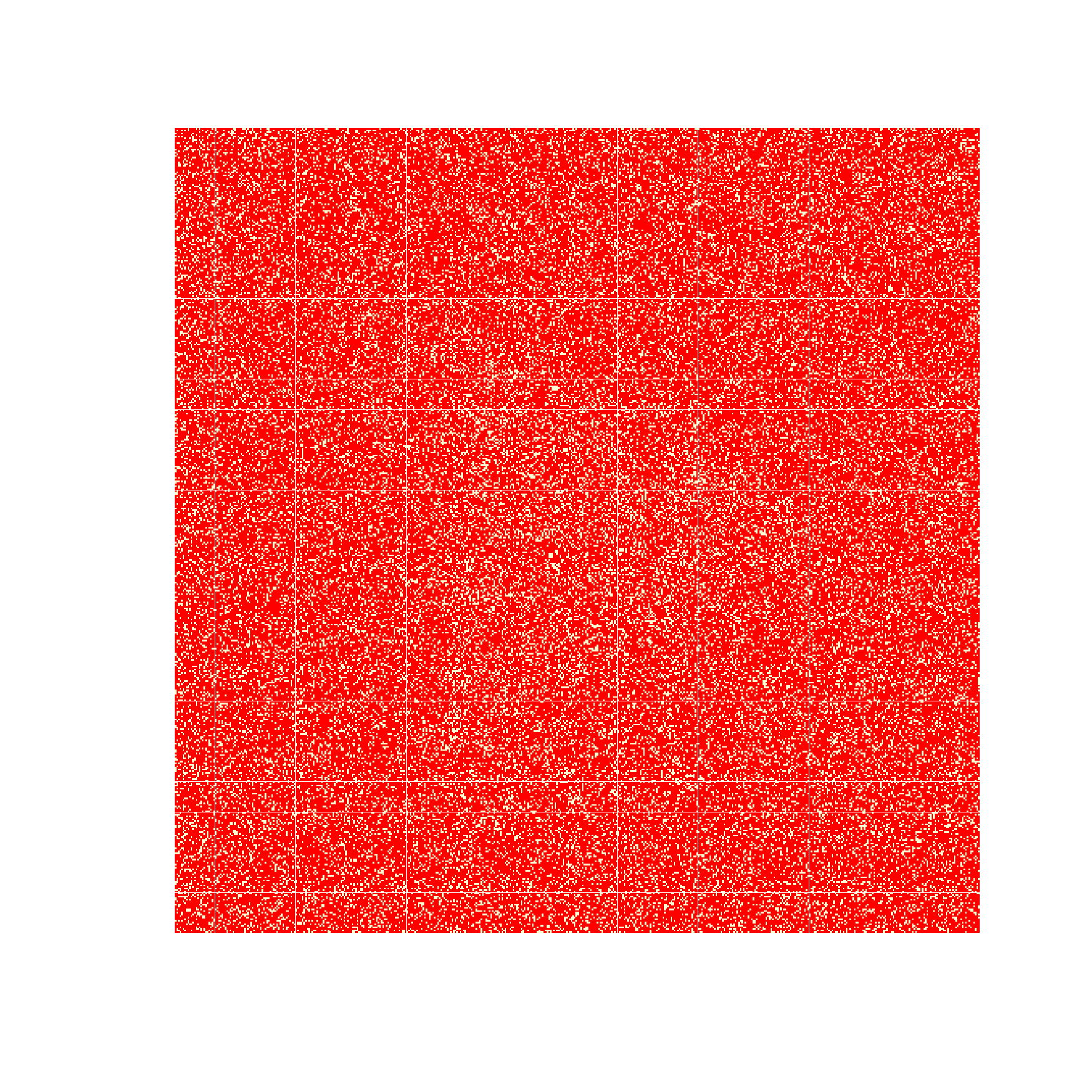}}
\subfloat[Bayesian Est.]{\includegraphics[width = 0.25\textwidth]{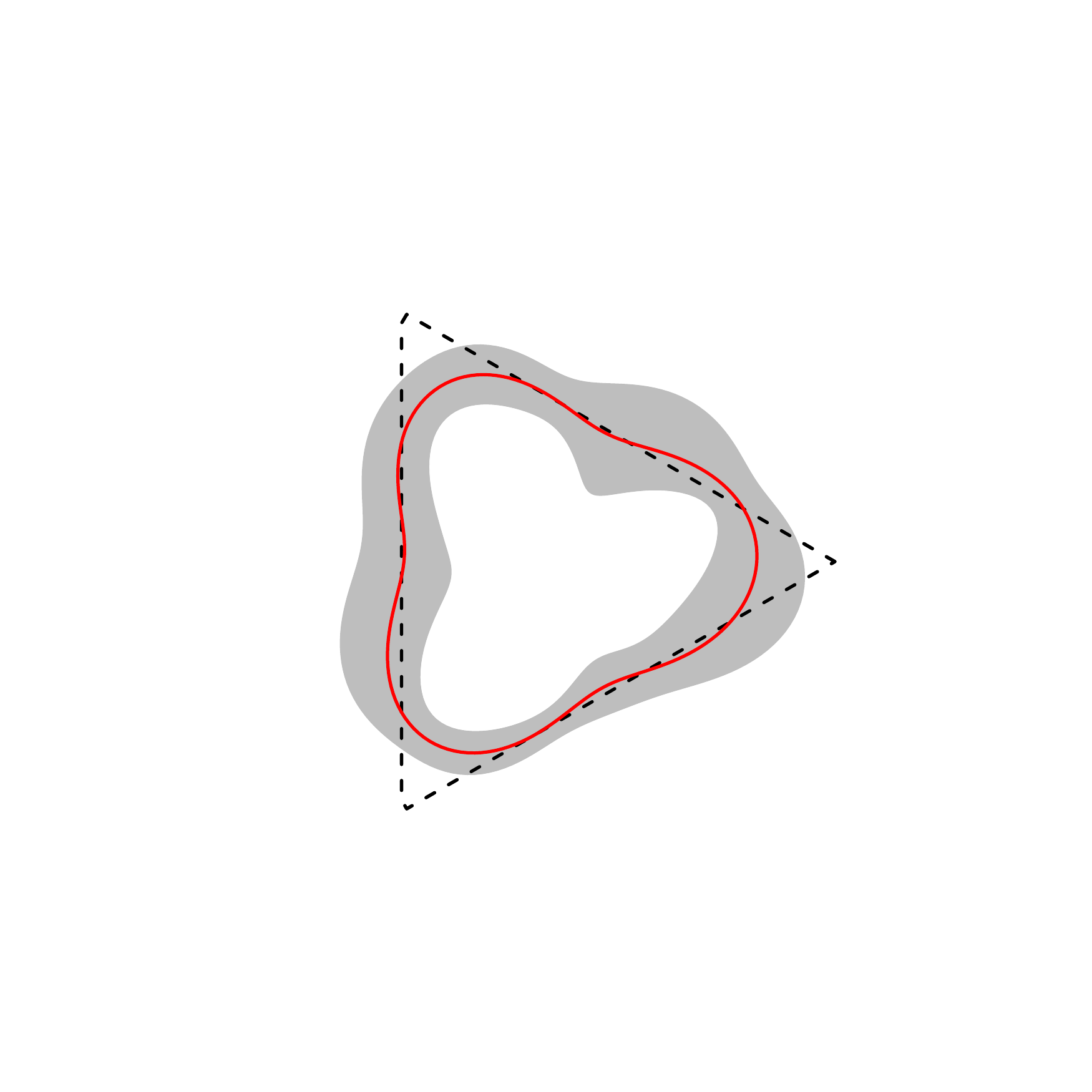}}\subfloat[MCE (5 basis)]{\includegraphics[width = 0.25\textwidth]{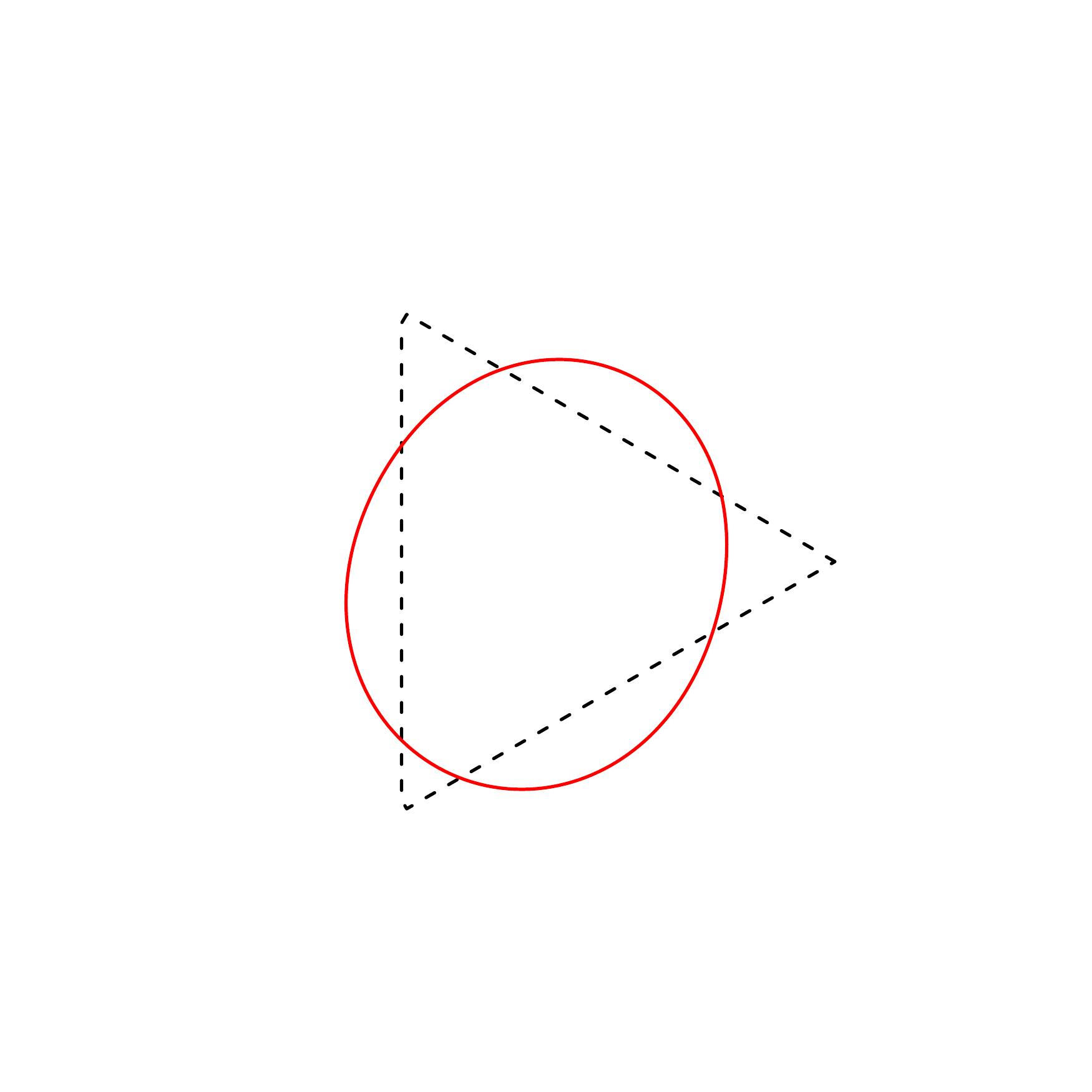}}
\subfloat[MCE (31 basis)]{\includegraphics[width = 0.25\textwidth]{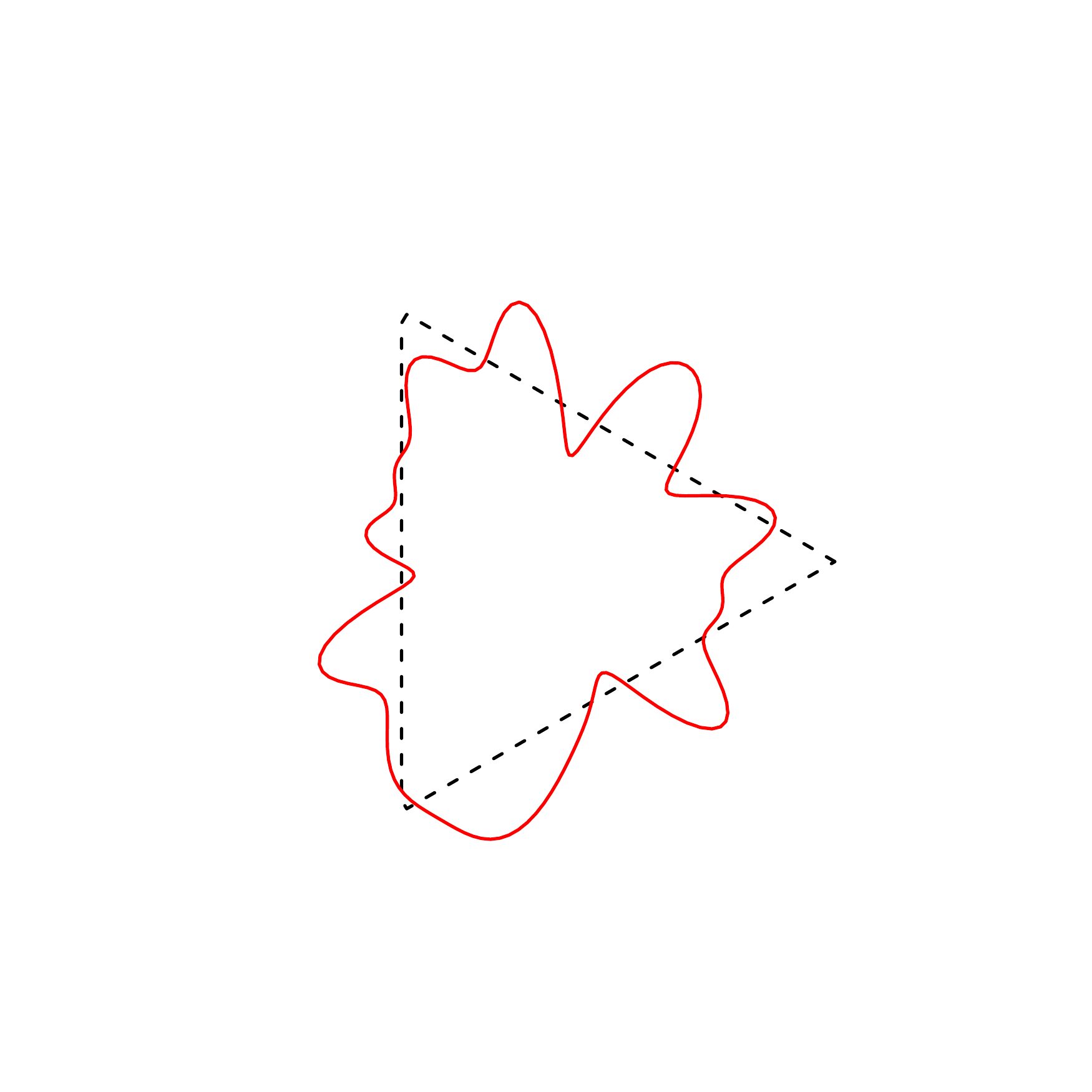}} \\
\subfloat[Case B3: $m = 500$, $\pi_1 = 0.50$]{\includegraphics[width = 0.25\textwidth]{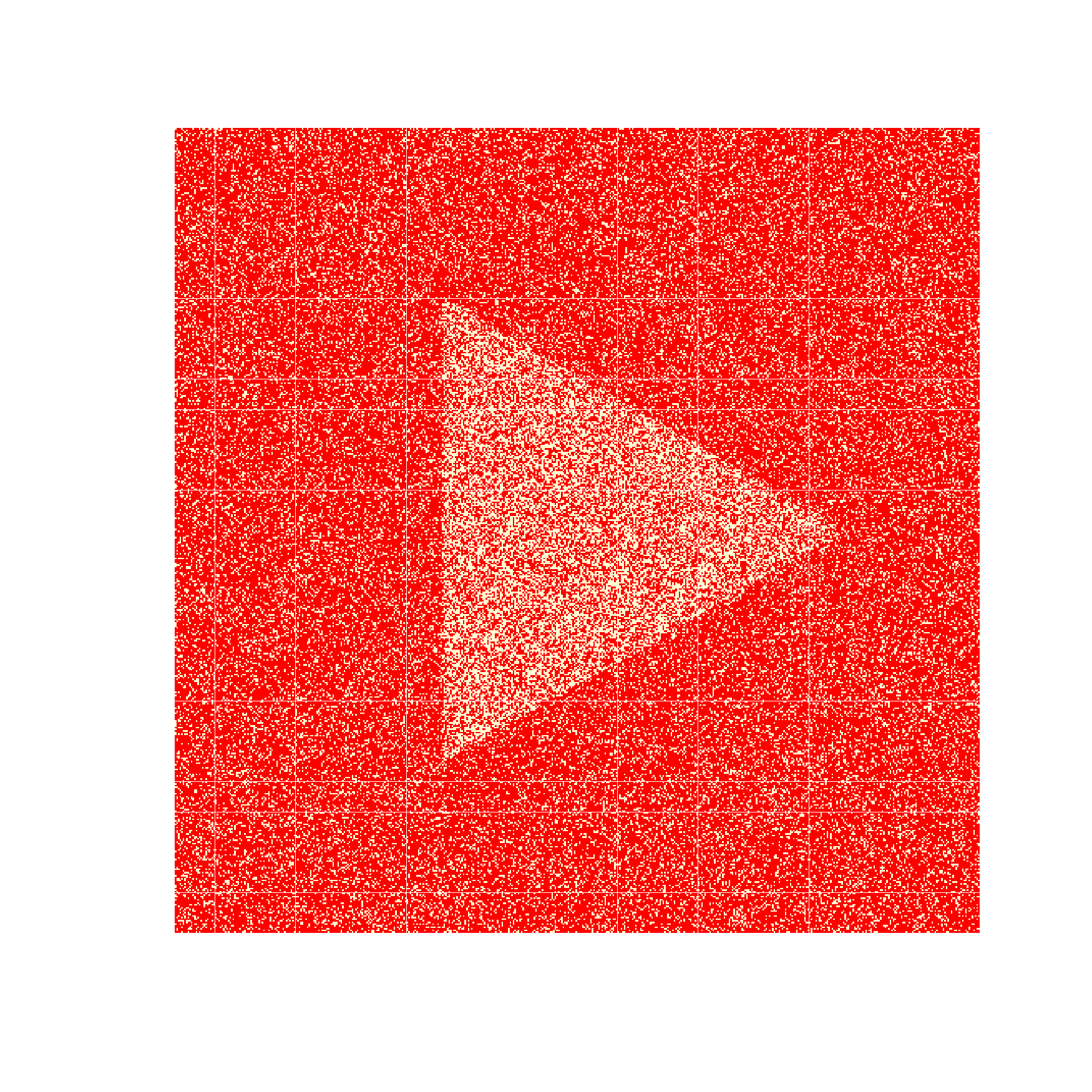}}
\subfloat[Bayesian Est.]{\includegraphics[width = 0.25\textwidth]{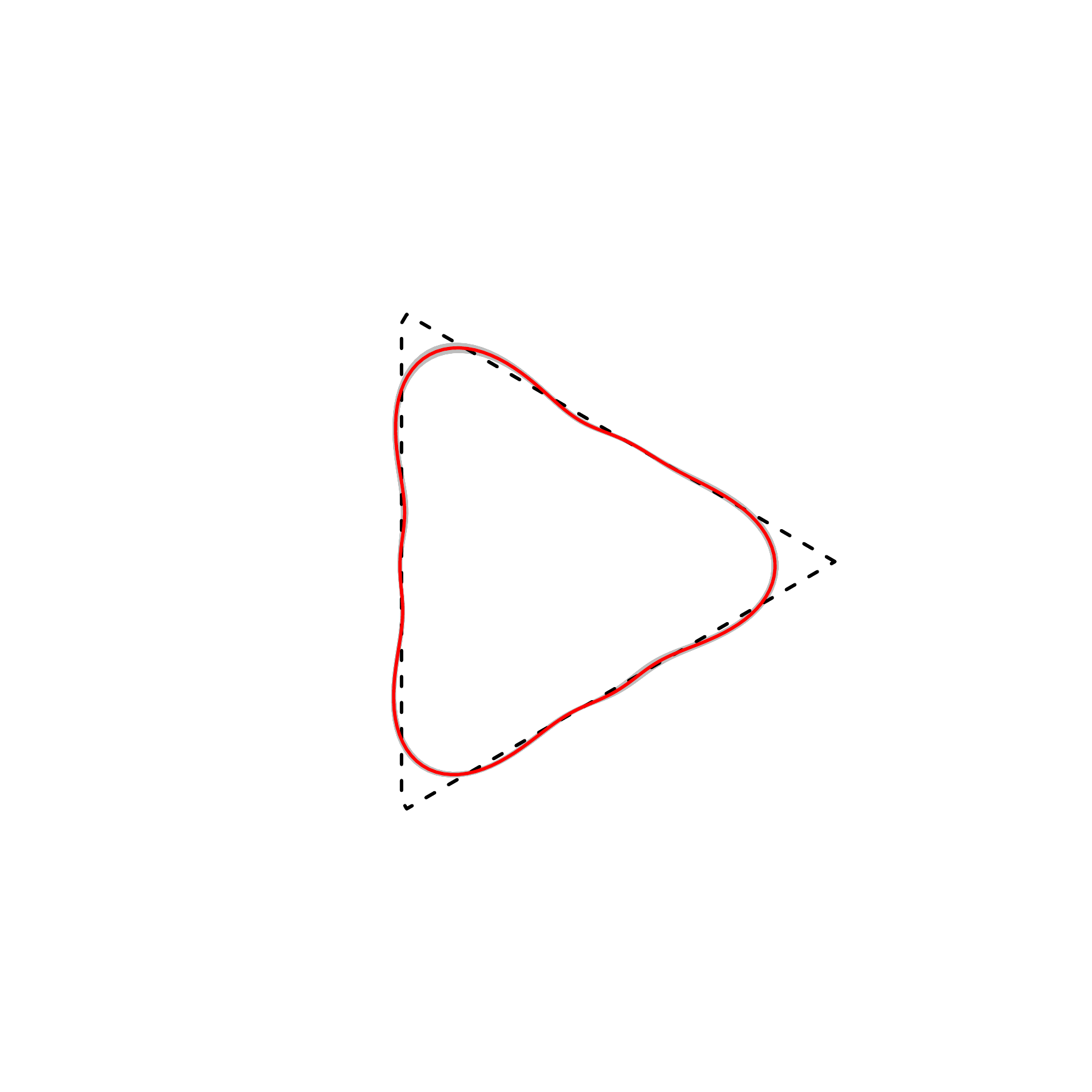}}\subfloat[MCE (5 basis)]{\includegraphics[width = 0.25\textwidth]{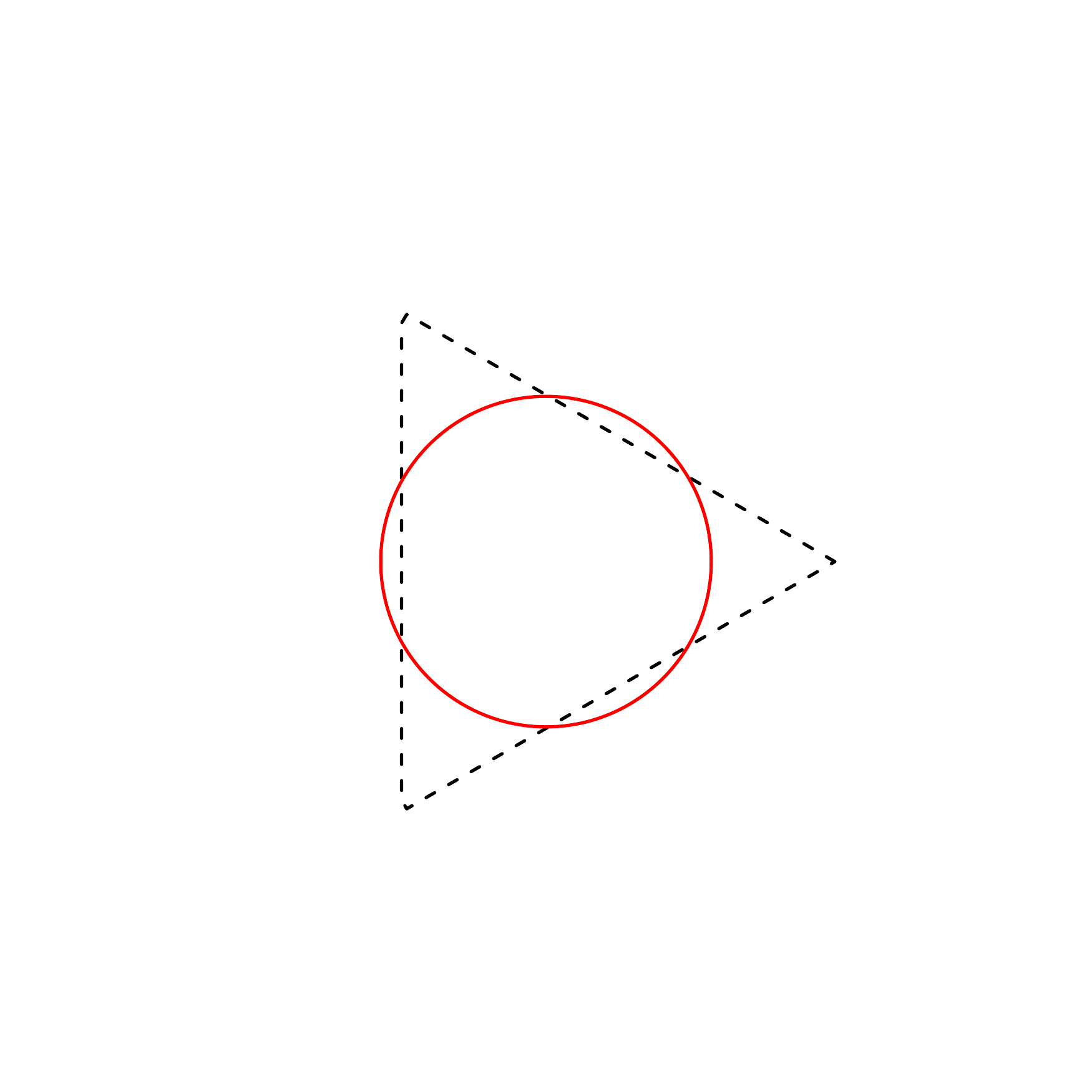}}
\subfloat[MCE (31 basis)]{\includegraphics[width = 0.25\textwidth]{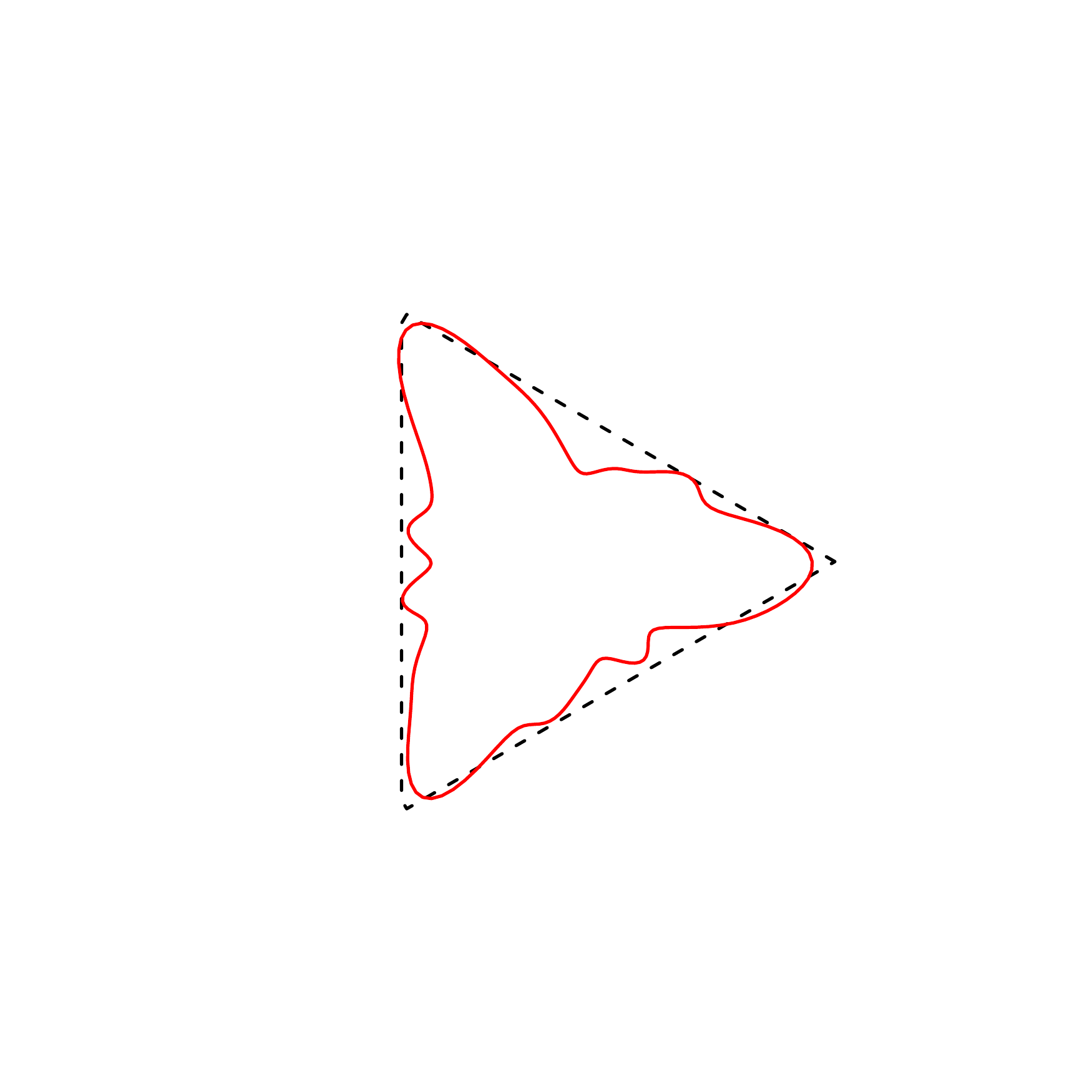}} \\
\caption{Performance on binary images (Column 1) when the boundary curve is an regular triangle. Column 2--4 plot the estimate (solid line in red) against the true boundary (dotted line in black). A 95\% uniform credible band (in gray) is provided for the Bayesian estimate (Column 2). }
\label{fig:triangle}
\end{figure}

\begin{figure}
	\centering 
	\begin{tabular}{cc} 
		\includegraphics[trim = 0.25in 0.5in 0.25in 0.5in, clip = TRUE, width=0.5\linewidth]{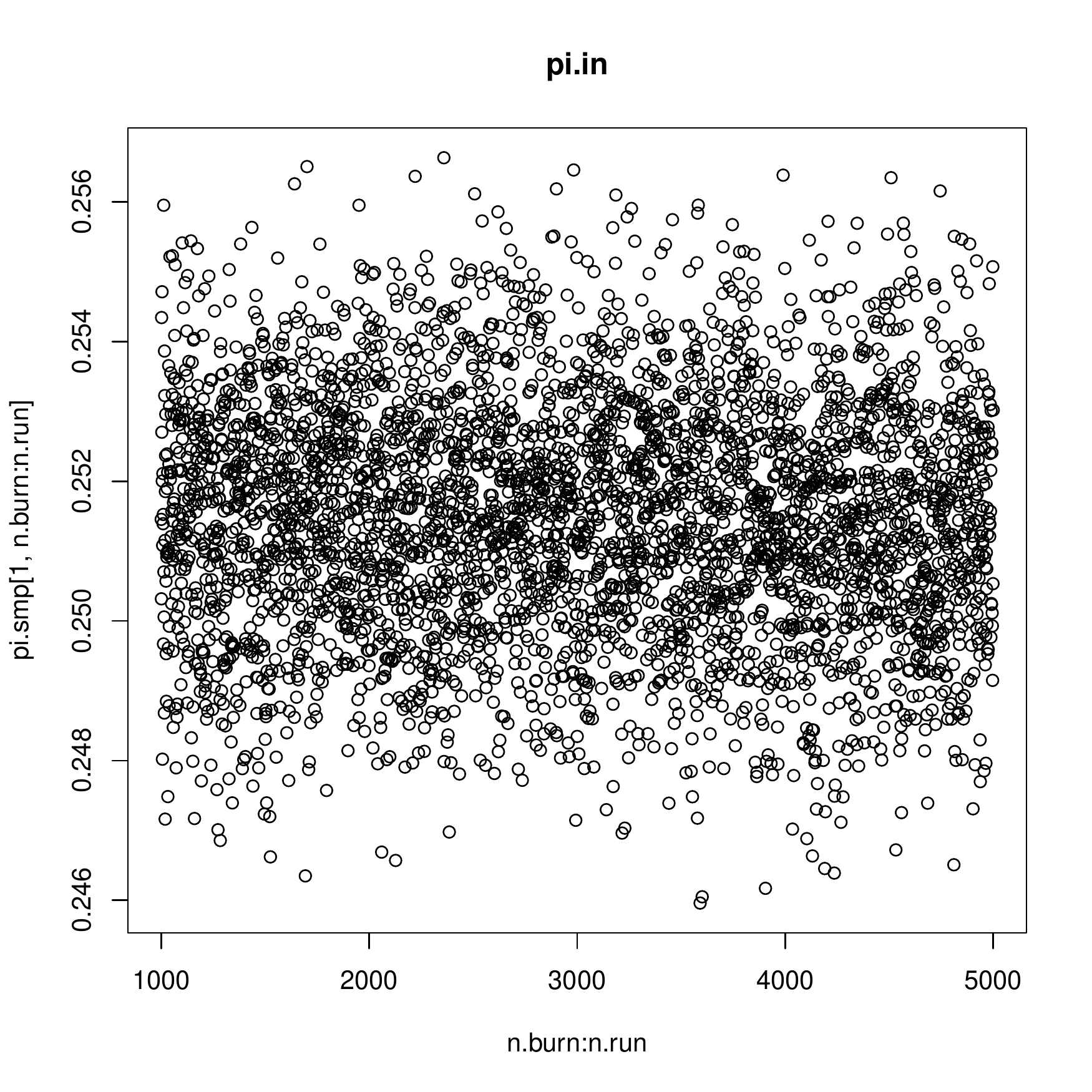} & 
		\includegraphics[trim = 0.25in 0.5in 0.25in 0.5in, clip = TRUE, width=0.5\linewidth]{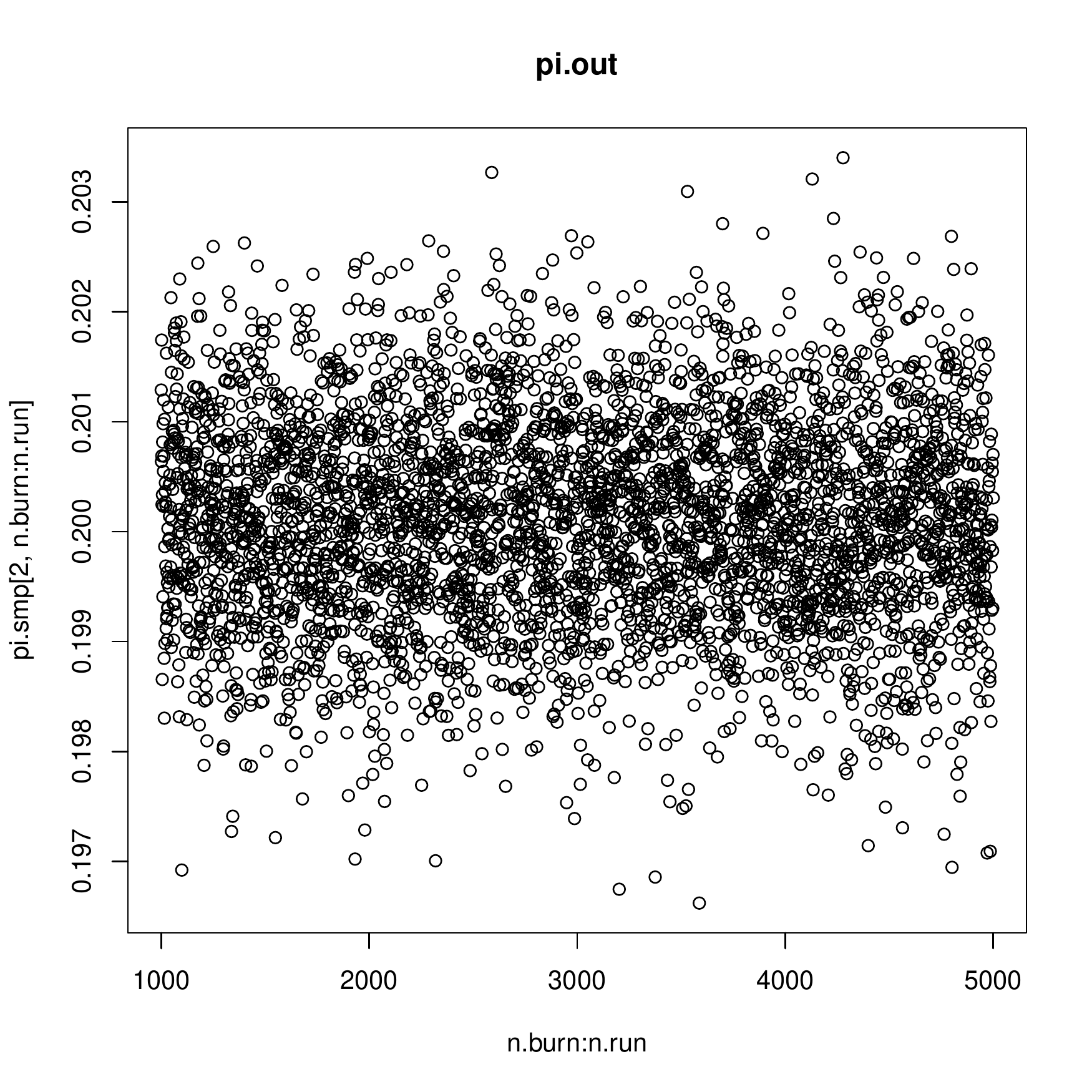} \\
		(a) Trace plot of $\pi_1$ & (b) Trace plot of $\pi_2$ \\
		\includegraphics[trim = 0.25in 0.5in 0.25in 0.5in, clip = TRUE, width=0.5\linewidth]{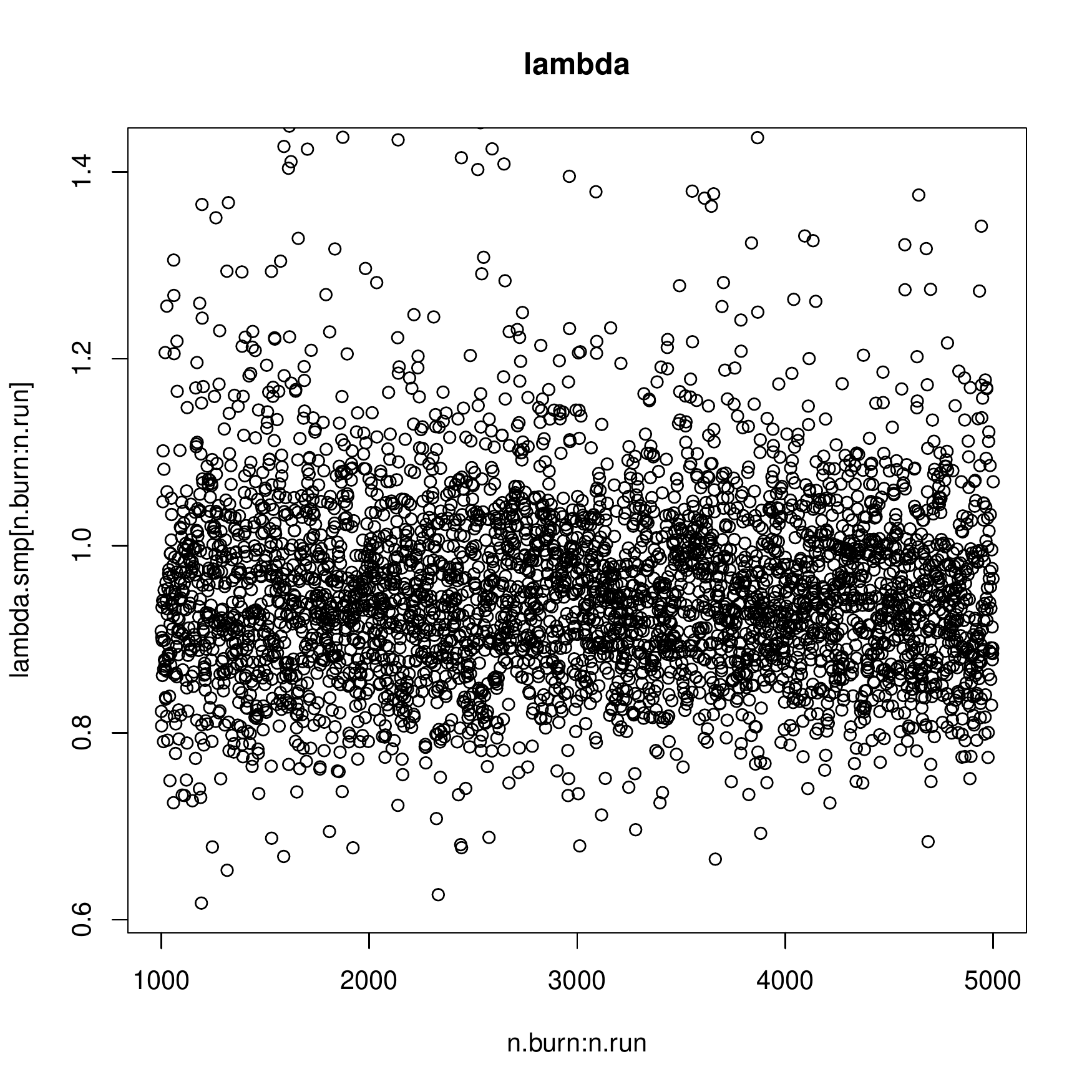} & 
		\includegraphics[trim = 0in 0.5in 0.25in 0.52in, clip = TRUE, width=0.5\linewidth]{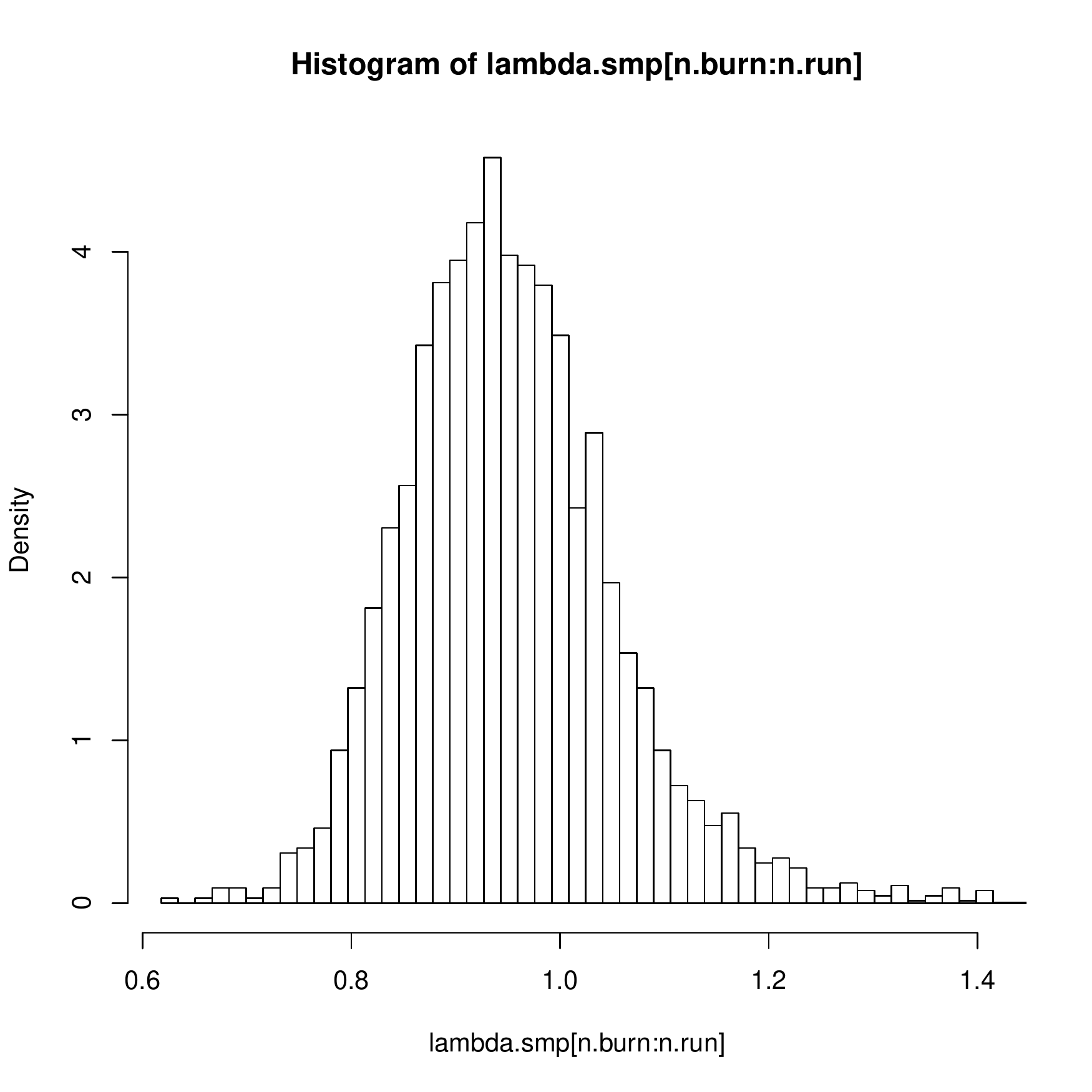} \\
		(c) Trace plot of $a$ & (d) Histogram of $a$ 
	\end{tabular}
	\caption{Trace plots and histograms of posterior samples of $(\pi_1, \pi_2, a)$ for Case B1 when $m = 500$ and $\pi_1 = 0.25$. }
	\label{fig:trace} 
\end{figure} 

\subsection{Numerical results for Gaussian noised images}
For Gaussian noised images, we keep using an ellipse with shift and rotation as the true boundary curve (i.e. Case B2). We consider the following four scenarios where the two standard deviations are all given by $(\sigma_1, \sigma_2) = (1.5, 1)$ and the observed image is $100 \times 100$:
\begin{itemize}
\item Case G1. $\mu_1 = 4, \mu_2 = 1$, i.e. the two regions differ in both the first two moments;
\item Case G2. $\mu_1 = \mu_2 = 1$, i.e. the two regions only differ in the standard deviation;
\item Case G3. $(\mu_1, \mu_2)$ are functions of the location. Let $r^I$ be the smallest radius inside the boundary, and $r^O$ the largest radius outside the boundary. We use $\mu(i)$ for the mean of $Y_i$ and let $\mu(i) = r_i - r^I + 0.2$ if it is inside, while $\mu(i) = r_i + r^O$ if outside. Therefore, the mean values vary at each location but have a gap of 0.2 between the two regions.
\item Case G4. We use mixture normal distribution $0.6 \mathrm{N}(2, \sigma_1^2) + 0.4 \mathrm{N}(1, \sigma_2^2)$ for the inside distribution; the outside distribution is still Gaussian with mean $\mu_2 = 1$.
\end{itemize}
Cases G3 and G4 allow us to investigate the performance of the proposed method when the distribution $f(\cdot)$ in the model is misspecified. For comparison, we use a 1-dimensional change-point detection algorithm~\citep{Chen+Gupta:11,Kil+:12} via the {\tt R} package {\tt changepoint}~\citep{Kil+Eck:11}. For the post-smoothing step, we use a penalized Fourier regression with 5 and 31 basis functions (method CP5 and CP31 in Table~\ref{table:Gaussian}). Here we use the estimates of CP5 as the mean in the Gaussian process prior. Table~\ref{table:Gaussian} shows that the proposed method has good performance for all the four cases. The method of CP5 and CP10 produce small errors in Case G1, but suffer a lot from the other three cases. It shows that the change-point method highly depends on the distinction between the means (Case G2), and also it loses its way when the model is misspecified. In fact, for Cases G2, G3 and G4, the CP5 and CP31 methods lead to a curve almost containing the whole frame of the image. The proposed Bayesian approach which models the boundary directly, seems to be not affected even when the model is substantially misspecified (Case G3). Figure~\ref{figure:Gaussian} shows the noisy observation and our estimation from 1 replication for all the four cases. We can see the impressive performance of the proposed method. It also shows that the contrast between the two regions are visible for Cases G3 and G4, and the proposed method is capable to capture the boundary even though the distributions are misspecified.

\begin{table}[ht]
\centering
\caption{Performance of the methods for Gaussian noised images based on $100$ simulations. The Lebesgue error ($\times 10^{-2}$) between the estimated boundary the true boundary is presented. The maximum standard errors of each column are reported in the last row. }
\label{table:Gaussian}
\begin{tabular}{lcccc}
  \hline
& Case G1 & Case G2 & Case G3 & Case G4 \\ \hline
Bayesian Method  & 0.11 & 0.99 & 0.69 & 0.99 \\ 
CP5 & 2.90 & 62.91 & 62.2 & 61.12 \\ 
CP31 & 1.99 & 64.00 & 63.26 & 62.10 \\ \hline 
SE & 0.01 & 0.26 & 0.19 & 0.27 \\ \hline 
\end{tabular}
\end{table}

\begin{figure}[h!]
\centering
\subfloat[Case G1]{\includegraphics[width = 0.25\textwidth]{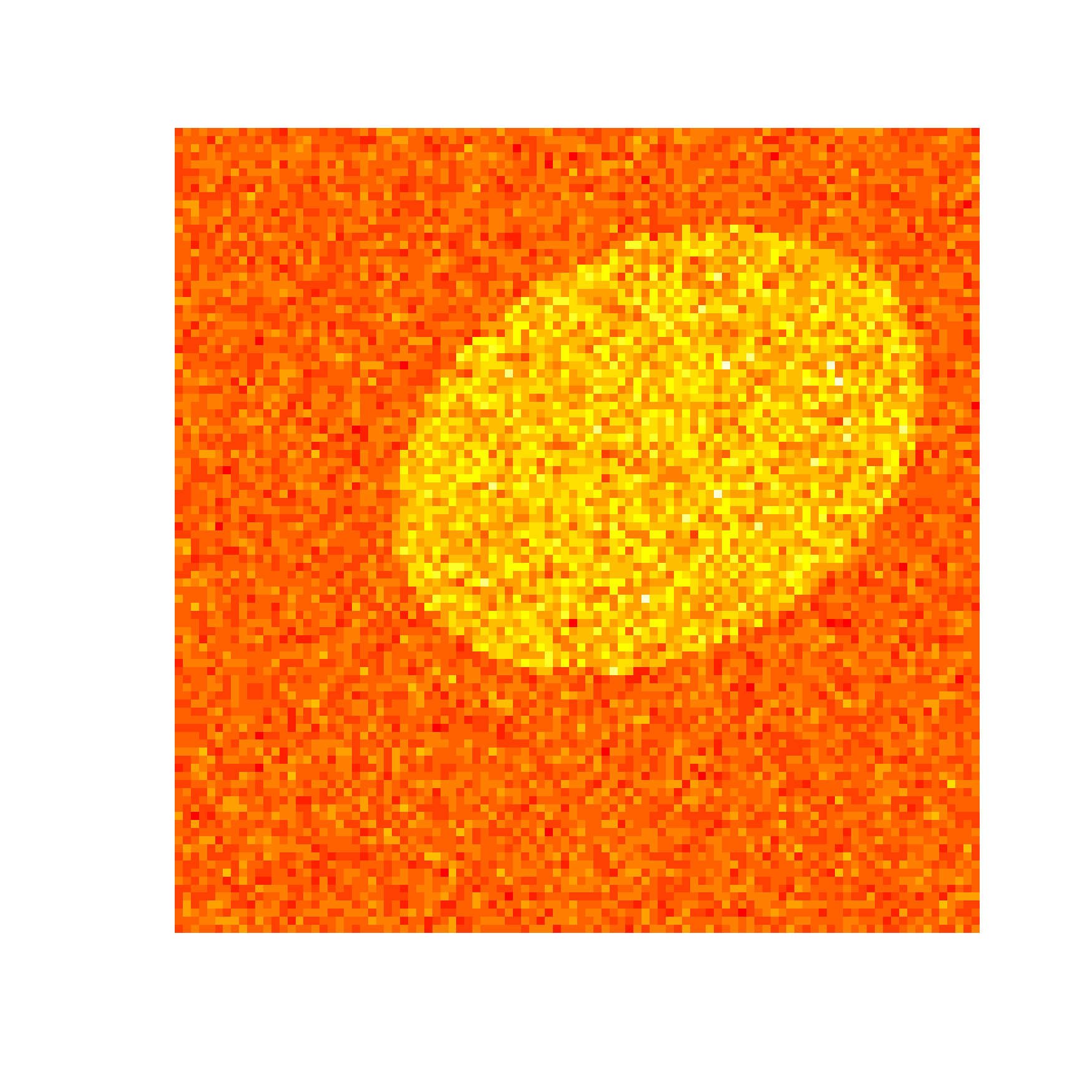}}
\subfloat[Case G2]{\includegraphics[width = 0.25\textwidth]{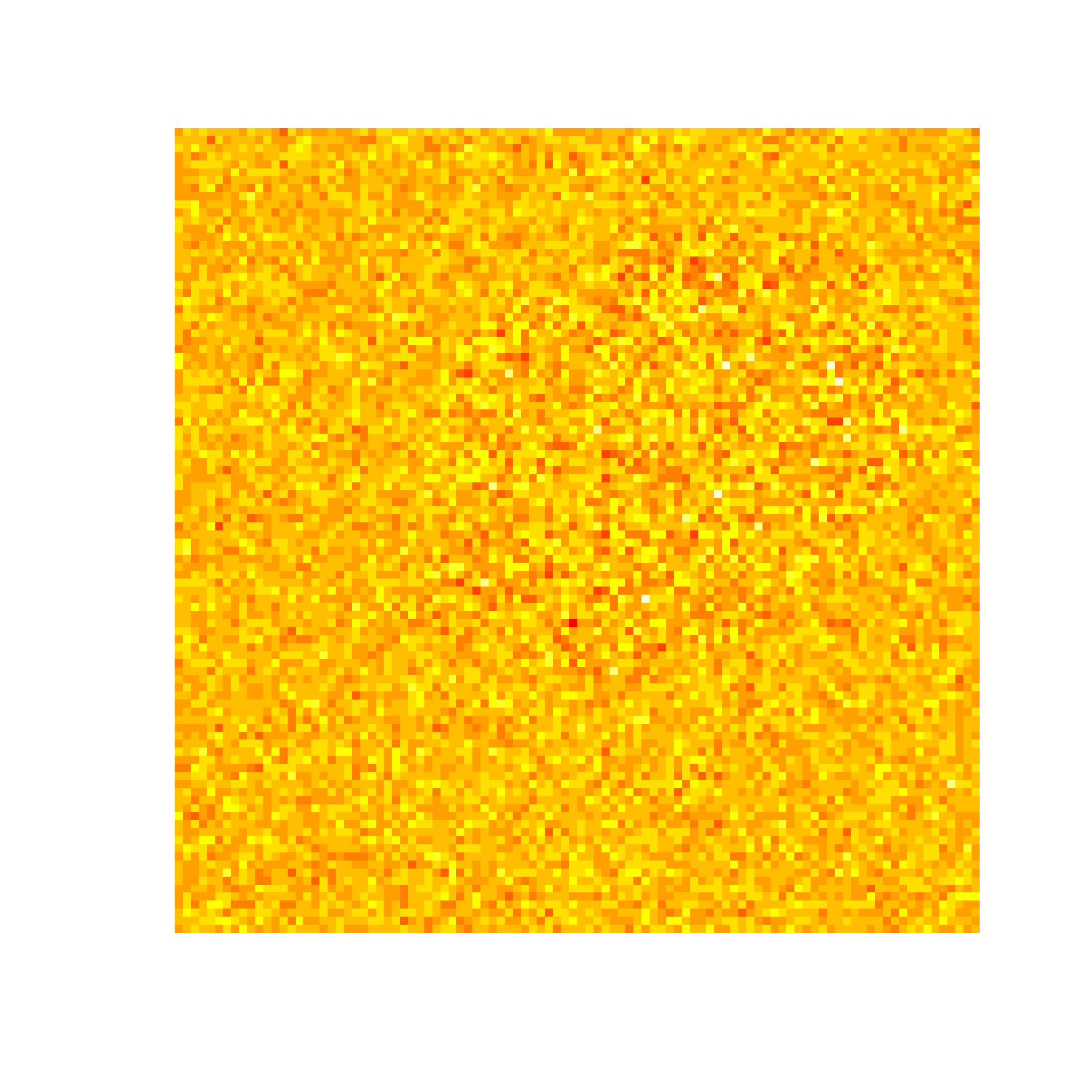}}
\subfloat[Case G3]{\includegraphics[width = 0.25\textwidth]{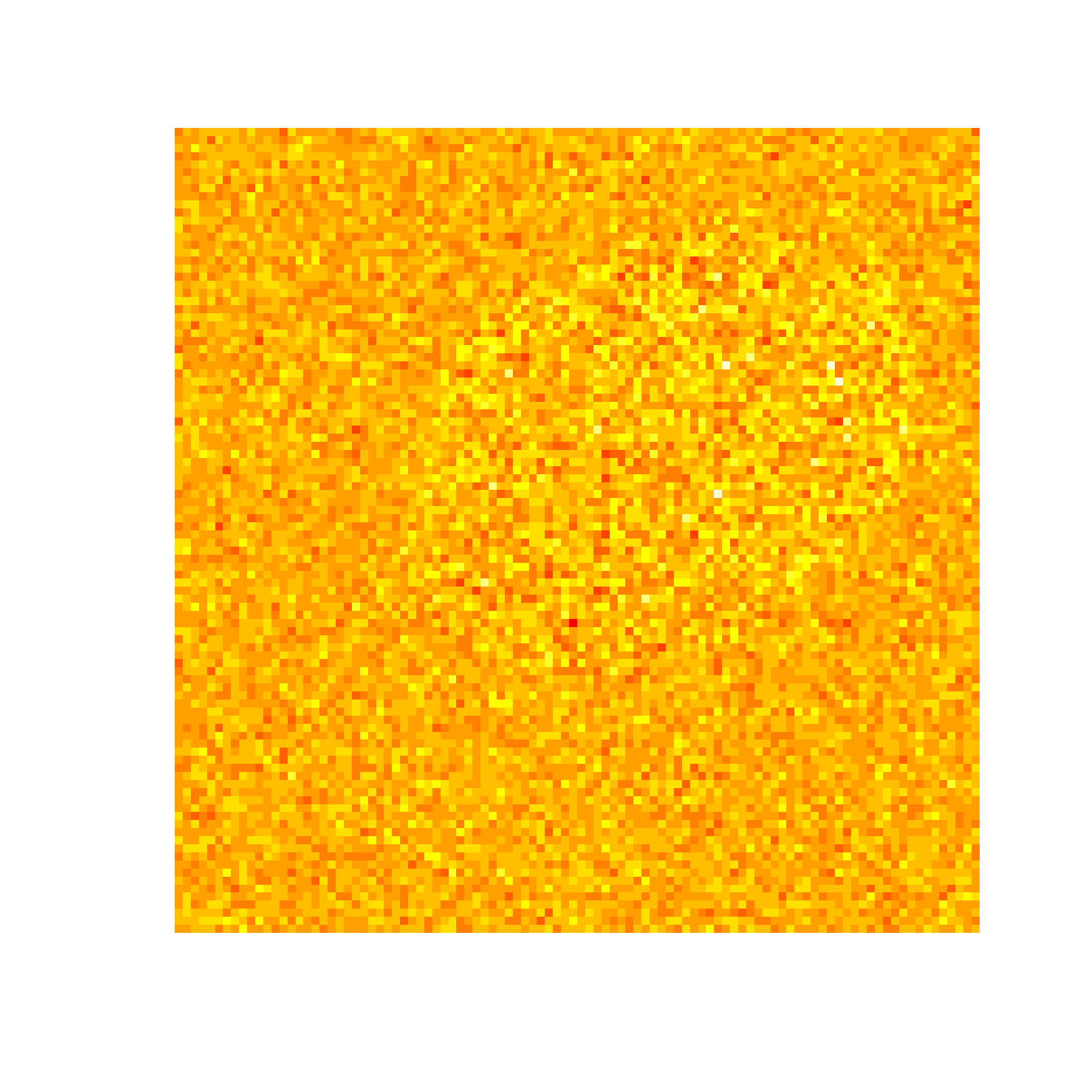}}
\subfloat[Case G4]{\includegraphics[width = 0.25\textwidth]{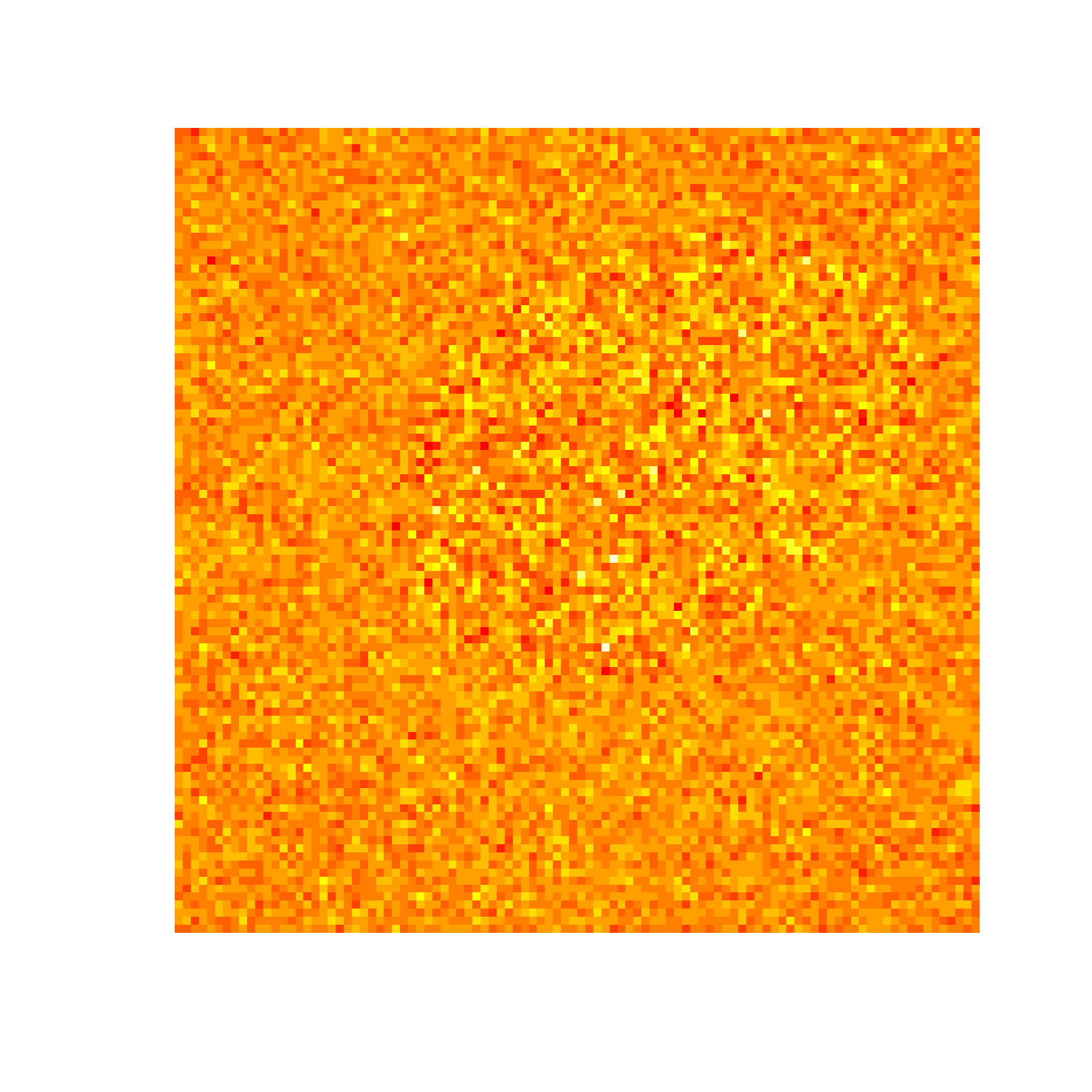}} \\
\subfloat[Case G1]{\includegraphics[width = 0.25\textwidth]{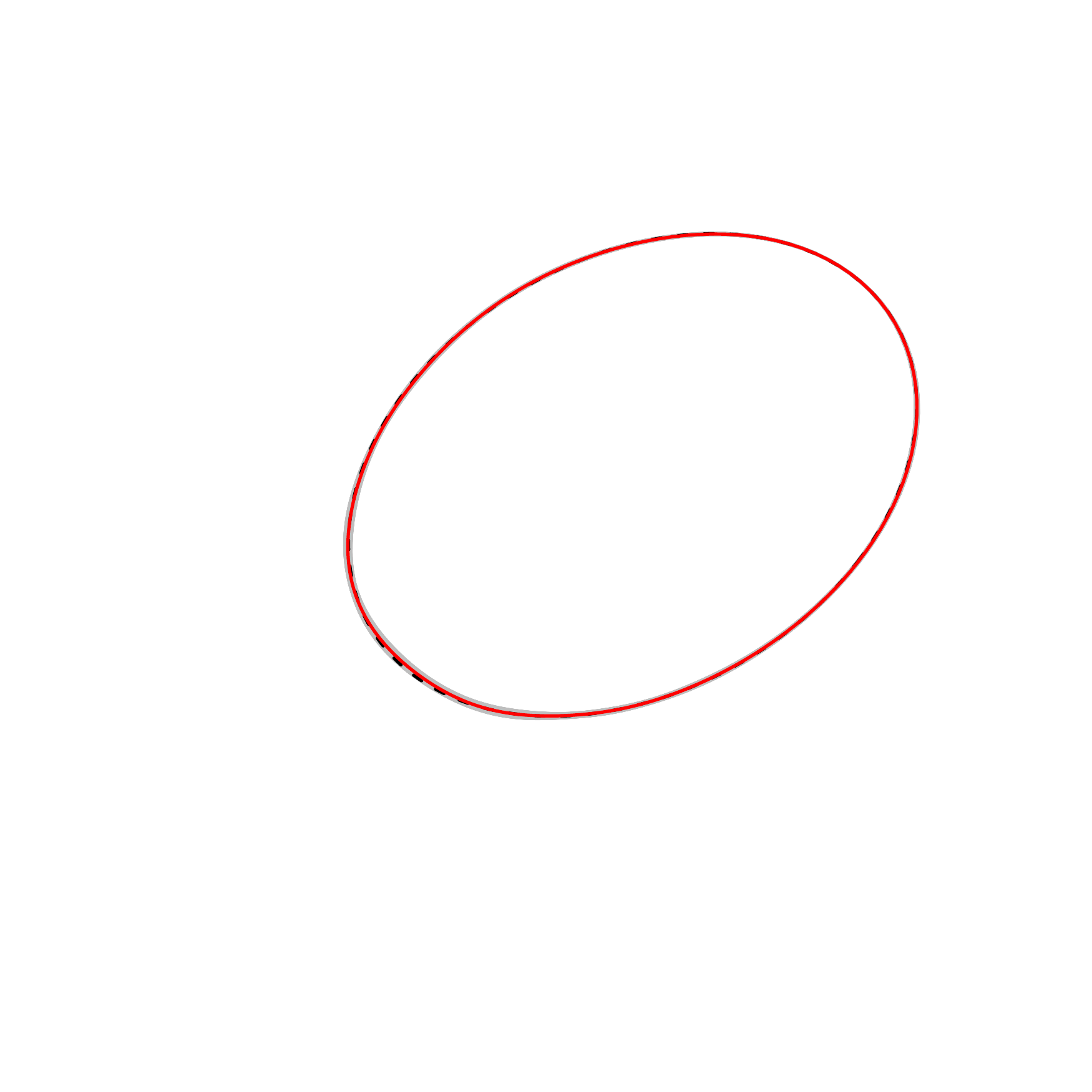}}
\subfloat[Case G2]{\includegraphics[width = 0.25\textwidth]{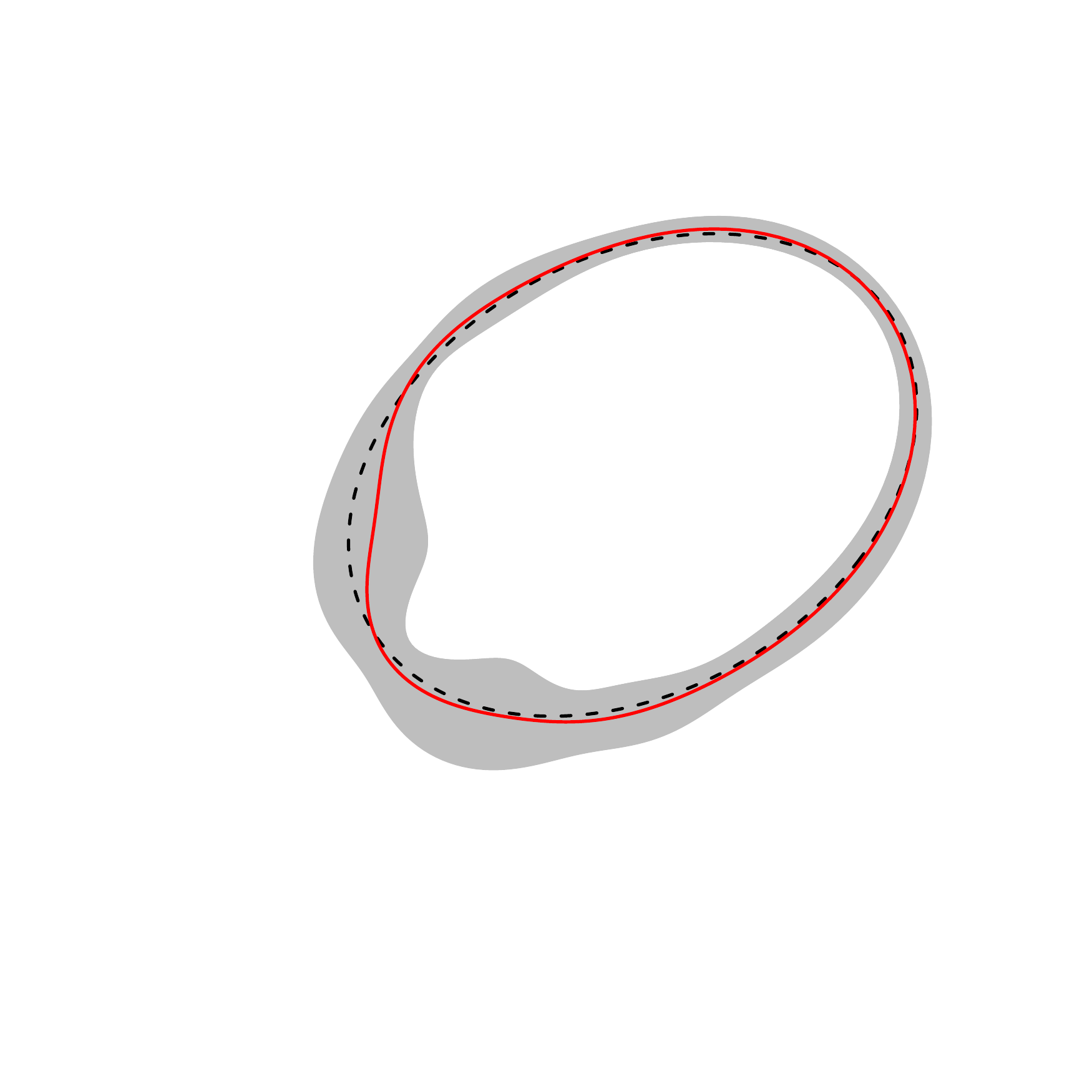}}
\subfloat[Case G3]{\includegraphics[width = 0.25\textwidth]{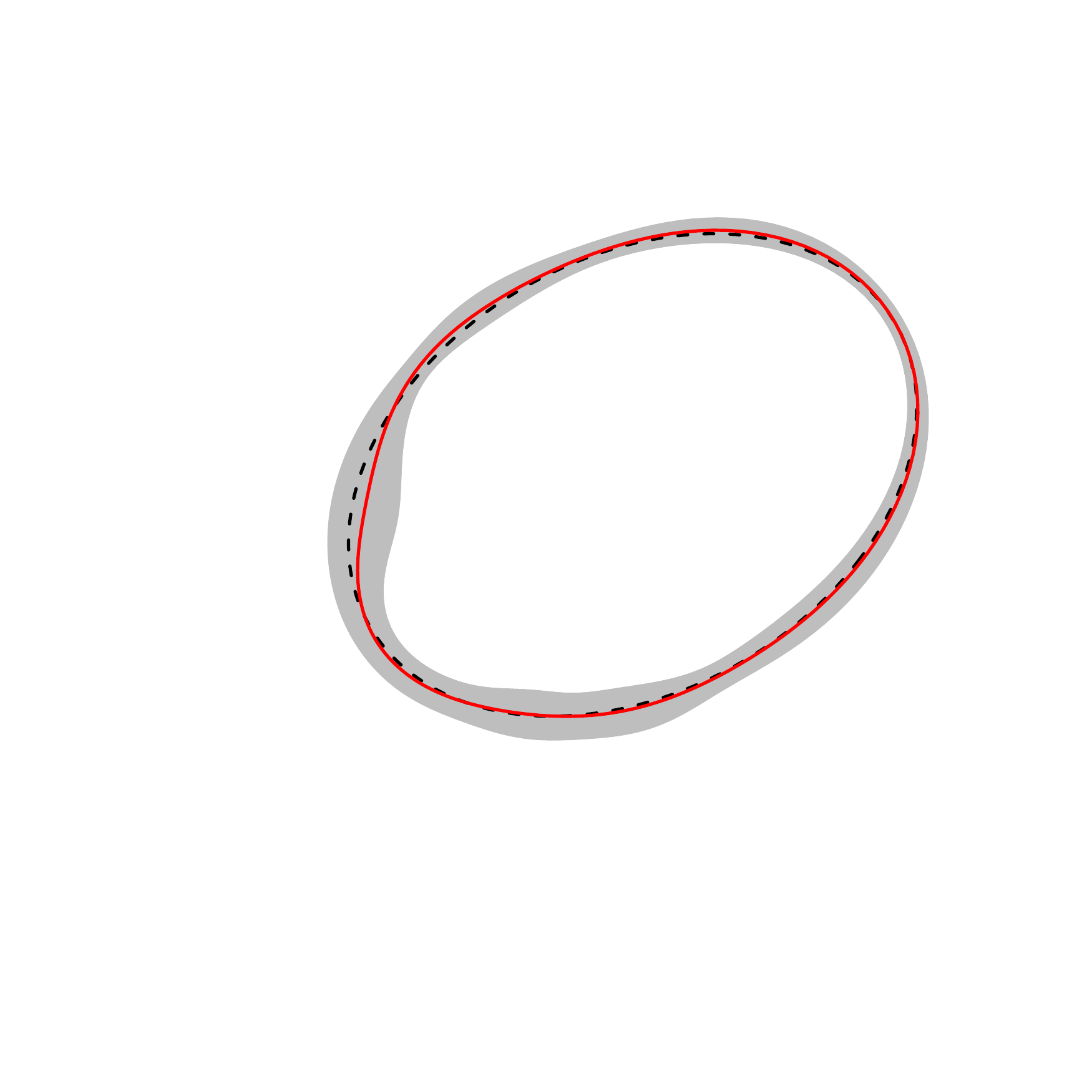}}
\subfloat[Case G4]{\includegraphics[width = 0.25\textwidth]{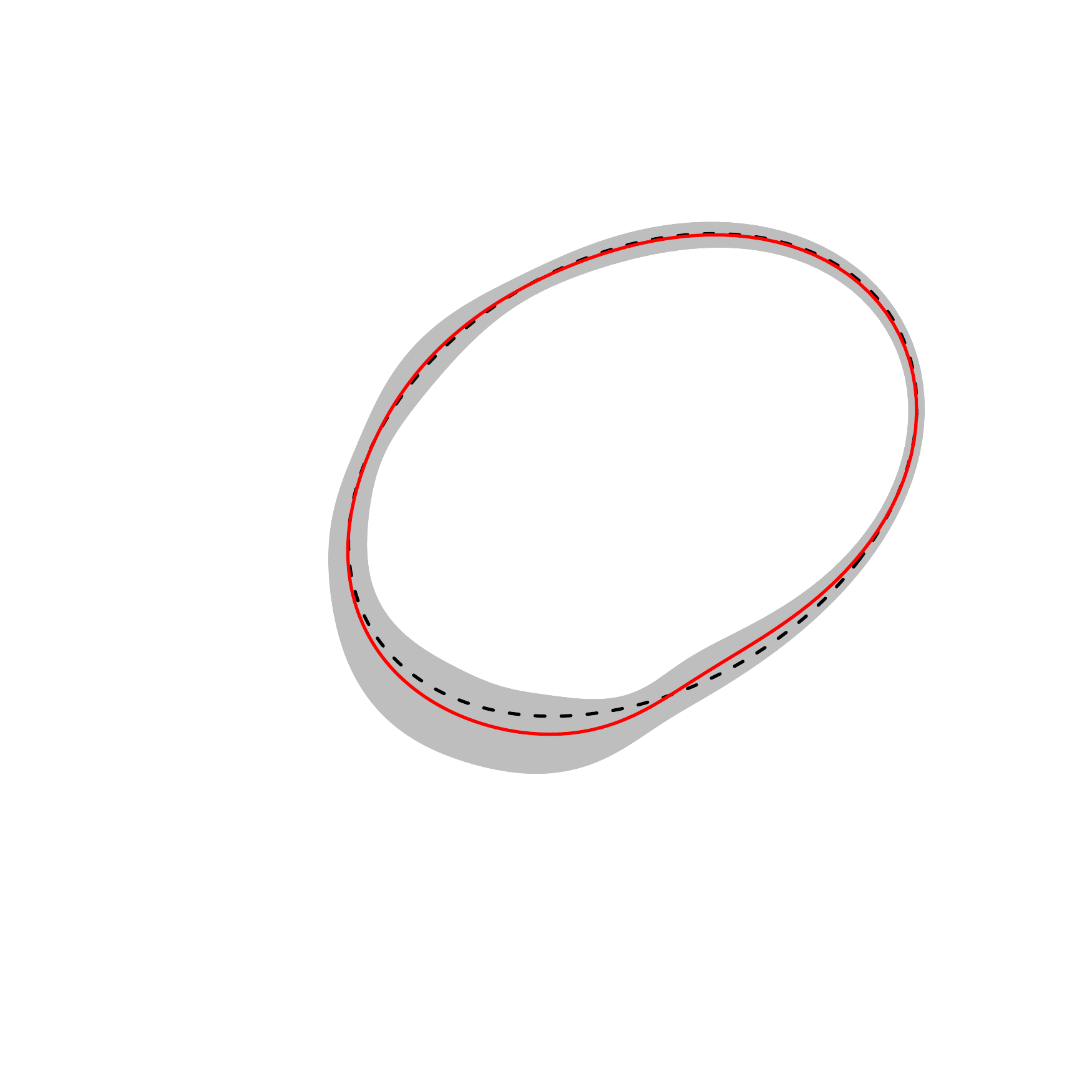}}
\caption{Proposed Bayesian estimates for Gaussian noised images with elliptic boundary. Plots (a)--(d) are the noisy observations. Figures (e)--(h) are the corresponding estimates (solid line in red) against the true boundary (dotted line in black), with a 95\% uniform credible band (in gray). }
\label{figure:Gaussian}
\end{figure}

\section{Proofs}
\label{sec:ch3.proof} 
\begin{proof}[Proof of Theorem~\ref{th:rate}]

{\it Step 1: Prior concentration.} Let
\begin{equation}
B_n^*(\theta_0, \epsilon) = \left\{\theta: \frac{1}{n} \sum_{i=1}^n K_{i}(\theta_0, \theta) \leq \epsilon^2, \frac{1}{n} \sum_{i=1}^n V_{i}(\theta_0, \theta) \leq \epsilon^2 \right\},
\end{equation}
where $K_{i}(\theta_0, \theta) = K(P_{\theta_0, {i}}, P_{\theta, {i}})$ and $V_{i}(\theta_0, \theta) = V(P_{\theta_0, {i}}, P_{\theta, {i}}).$ When $\|\xi - \xi_0 \| \leq \epsilon^2$ and $\|\rho - \rho_0 \| \leq \epsilon^2$ for some small $\epsilon$, it follows that
\begin{align}
K_i(\theta_0, \theta) & = K(\xi_0, \xi) P(X_{i} \in \Gamma_0 \cap \Gamma) + K(\rho_0, \rho) P(X_{i} \in \Gamma_0^c \cap \Gamma^c) \\
& \quad + K(\xi_0, \rho) P(X_{i} \in \Gamma_0 \cap \Gamma^c) + K(\rho_0, \xi) P(X_{i} \in \Gamma_0^c \cap \Gamma) \\
&\lesssim \|\xi_0 - \xi \|^2 + \|\rho_0 - \rho\|^2 + P(X_i \in \Gamma_0^c
\cap \Gamma ) + P(X_i \in \Gamma_0 \cap \Gamma^c) \\
\label{eq:KL.bound}
& =  \|\xi_0 - \xi \|^2 + \|\rho_0 - \rho\|^2  + n \lambda[(\Gamma_0 \bigtriangleup \Gamma) \cap T_i],
\end{align}
according to the Assumption (A). Consequently, the average Kullback-Leibler divergence
\begin{equation}
\label{eq:KL.bound}
\begin{split}
\frac{1}{n} \sum_i K_i(\theta_0, \theta) &\lesssim
 \|\xi_0 - \xi \|^2 + \|\rho_0 - \rho\|^2 + \frac{1}{n} n \lambda[(\Gamma_0
\bigtriangleup   \Gamma) \cap (\cup T_{\bs{i}})] \\
& =
 \|\xi_0 - \xi \|^2 + \|\rho_0 - \rho\|^2 +  \lambda( \Gamma_0
\bigtriangleup \Gamma).
\end{split}
\end{equation} Similarly, the second moment $V_i$ of the log-likelihood ratio is also bounded in the same way, i.e. $
V_{i}(\theta_0, \theta) \lesssim \|\xi_0 - \xi \|^2 + \|\rho_0 - \rho\|^2 +  \lambda( \Gamma_0
\bigtriangleup \Gamma),
$
which leads to
\begin{equation}
  B_n^*(\theta_0, \epsilon) \supset \{(\xi, \rho, \gamma): \|\xi_0 -
  \xi\|^2 \leq \epsilon^2/3, \|\rho_0 - \rho\|^2 \leq \epsilon^2/3, \lambda( \Gamma_0
  \bigtriangleup \Gamma) \leq \epsilon^2/3 \}. 
\end{equation} 
\comment{
Therefore, we have 
$
	\Pi(B_n^*(\theta_0, \epsilon)) \gtrsim \Pi(\|\xi_0 - \xi\|^2
	\leq \epsilon^2/3, \|\rho_0 - \rho\|^2 \leq \epsilon^2/3) \times
	\Pi(\lambda( \Gamma_0
	\bigtriangleup \Gamma) \leq \epsilon^2/3),
$
or equivalently, 
\begin{align}
-\log \Pi(B_n^*(\theta_0,\epsilon_n))  \lesssim & -\log \Pi(\|\xi_0 - \xi\|^2
\leq \epsilon^2/3, \|\rho_0 - \rho\|^2 \leq \epsilon^2/3)  \\ & -\log \Pi(\gamma: \lambda( \Gamma_0
\bigtriangleup \Gamma) \leq \epsilon^2/3) .
\end{align}
By Assumption (B1), the prior density of $(\xi, \rho)$ is bounded below in a neighborhood of $(\xi_0, \rho_0)$, indicating that $\Pi(\|\xi_0 - \xi\|^2
\leq \epsilon^2/3, \|\rho_0 - \rho\|^2 \leq \epsilon^2/3)  \gtrsim \epsilon^{2p}$ and thus $-\log \Pi(\|\xi_0 - \xi\|^2
\leq \epsilon^2/3, \|\rho_0 - \rho\|^2 \leq \epsilon^2/3)  \lesssim 
\log(1/\epsilon^2)$. 
}

\comment{ 
Let $\epsilon_n$ be a sequence such that $\epsilon_n \rightarrow 0$ and $ n\epsilon_n^2 / \log n$ bounded away from 0, we then have $\log (1/\epsilon_n^2) \lesssim \log n \lesssim  n \epsilon_n^2$. Then in order to ensure that $ -\log \Pi(B_n^*(\theta_0,\epsilon_n)) \lesssim  n \epsilon_n^2$, it suffices that 
$-\log \Pi(\gamma: \lambda( \Gamma_0
\bigtriangleup \Gamma) \leq \epsilon_n^2) \lesssim  n \epsilon_n^2$ in equation~\eqref{eq:priormass}.
}

{\it Step 2: Sieves.} For each prior, we shall define a sieve $\Sigma_n$ for
 $\gamma$, and consider $\Theta_n = [-c_n, c_n]^p \times [-c_n, c_n]^p \times \Sigma_n$ as the sieve for $\theta$. Because 
 $$ \Pi(\Theta_n^c) \leq \Pi(\xi: \xi \notin [-c_n, c_n]^p) + \Pi(\rho: \rho \notin [-c_n, c_n]^p) + \Pi(\gamma: \gamma \notin \Sigma_n),$$
 in order to ensure that the
  sieve contains most of the prior mass, it is sufficient to show $-\log \Pi(\Sigma_n^c) \gtrsim  n\epsilon_n^2$ as in equation~\eqref{eq:sieve} provided that $-\log \Pi(\xi: \xi \notin [-c_n, c_n]^p) \gtrsim n \epsilon_n^2$ and $-\log \Pi(\rho: \rho \notin [-c_n, c_n]^p) \gtrsim n \epsilon_n^2$. For the later two conditions, we let $c_n = e^{n\epsilon_n^2}$. Then $-\log \Pi(\xi: \xi \notin [-c_n, c_n]^p) \gtrsim -\log c_n^{-t_2}$ by Assumption (B2), which is $t_2 \cdot n \epsilon_n^2 \gtrsim n \epsilon_n^2$; similarly, we have $-\log \Pi(\xi: \xi \notin [-c_n, c_n]^p) \gtrsim n \epsilon_n^2$.

{\it Step 3: Entropy bounds.}
Let $\sigma_n = \underset{\gamma \in {\Sigma}_n}{\sup} \| \gamma \|_{\infty}$, for $\gamma, \gamma' \in \Sigma_n$, we then have 
\begin{equation}
\lambda(\gamma, \gamma') = \int_{\sphere} \left|\int_{ \gamma(\bs{\omega})}^{\gamma'(\bs{\omega})} r^{d - 1} dr \right| d \bs{\omega} \leq  \sigma_n^{d - 1} \|\gamma' - \gamma\|_{\infty} \int_{\sphere} d\bs{\omega}\lesssim  \sigma_n^{d - 1} \| \gamma - \gamma'\|_{\infty}.
\end{equation}
Like in equation~\eqref{eq:KL.bound}, the average squared Hellinger distance $d_n^2$ has the following bound when $\|\xi - \xi'\| \leq \epsilon$ and $\|\rho - \rho'\| \leq \epsilon$ for some small $\epsilon$ \comment{and $\max(\|\xi\|, \|\xi'\|, \|\rho\|, \|\rho'\|) \leq M$:
\begin{equation}
\label{eq:d_nbound}
\begin{split}
  d_n^2(\theta, \theta') & = \frac{1}{n} \sum_{i=1}^n \int h^2(\phi_i, \phi_i') d P_{X_{i}} \lesssim h^2(\xi, \xi') + h^2(\rho, \rho')+ \lambda(\Gamma
  \bigtriangleup \Gamma') \\
 & \lesssim  (1+M^{b_0}) ( \|\xi - \xi'\|^2 + \|\rho - \rho'\|^2 )+ \comment{\sigma_n^{d - 1}}\|\gamma - \gamma'\|_{\infty}
\end{split}
\end{equation}
by Condition (A2).} Therefore the entropy $\log N(\epsilon_n, \Theta_n, d_n)$ is bounded by \comment{
\begin{align}
2\log N(\epsilon_n^2/(1+c_n)^{b_0}, [-c_n,c_n]^p, \|\cdot\|) &  + \log N(\epsilon_n^2 \comment{/\sigma_n^{d- 1}}, \Sigma_n, \|\cdot\|_{\infty}) \\
& \lesssim \log \frac{c_n}{\epsilon_n^2} +
\log N(\epsilon_n^2 \comment{/\sigma_n^{d- 1}}, \Sigma_n, \|\cdot\|_{\infty}).
\end{align}
Recall that $c_n = e^{n \epsilon_n^2}$. Therefore $ \log ({c_n}/{\epsilon_n^2}) \le  n \epsilon_n^2 + \log(1/\epsilon_n^2) \lesssim  n \epsilon_n^2$.} Hence, in order to ensure $\log N(\epsilon_n, \Theta_n, d_n)
\lesssim n \epsilon_n^2$, it is sufficient to verify that
$\log N(\epsilon_n^2\comment{/\sigma_n^{d- 1}}, \Sigma_n, \|\cdot\|_{\infty}) \lesssim n \epsilon_n^2$ which is equation~\eqref{eq:entropy}.

Then equation~\eqref{eq:theta.rate} follows by applying Theorem 4 of~\cite{Ghosal+van:07}.


\comment{Equation~\eqref{eq:gamma.rate} will follow if we show that
$d_n(\theta, \theta_0) \leq \epsilon_n$ implies $\lambda(\Gamma \bigtriangleup \Gamma_0) \lesssim \epsilon_n^2/c_{0,n}^2$.} As argued in the derivation of~\eqref{eq:KL.bound}, $d^2_n(\theta, \theta')$ is given by
\begin{equation}
\begin{split}
\frac{1}{n} \sum_{i} \int h^2(\phi, \phi') d P_{X_{i}}
& = h^2(\xi_0, \xi) \lambda(\Gamma_0 \cap \Gamma) + h^2(\rho_0,\rho) \lambda(\Gamma_0^c \cap \Gamma^c) \\
\label{eq:d2.category}
& \quad + h^2(\xi_0, \rho) \lambda(\Gamma_0 \cap \Gamma^c) + h^2(\rho_0, \xi) \lambda(\Gamma_0^c \cap \Gamma).
\end{split}
\end{equation}
The above expression is larger than each of the following three expressions:
\begin{align}
\nonumber
h^2(\xi_0, \xi) \lambda(\Gamma_0 \cap \Gamma) & + h^2(\rho_0, \xi) \lambda(\Gamma_0^c \cap \Gamma) \\ &  \geq \frac{(h(\xi_0, \xi) + h(\rho_0, \xi))^2}{2} \cdot (\lambda(\Gamma_0 \cap \Gamma) \wedge \lambda(\Gamma_0^c \cap \Gamma)),\\
\nonumber
h^2(\rho_0, \rho) \lambda(\Gamma_0^c \cap \Gamma^c) & + h^2(\xi_0, \rho) \lambda(\Gamma_0 \cap \Gamma^c)\\ &  \geq \frac{(h(\rho_0, \rho) + h(\xi_0, \rho))^2}{2} \cdot (\lambda(\Gamma_0^c \cap \Gamma^c)\wedge \lambda(\Gamma_0 \cap \Gamma^c)), \\
h^2(\xi_0, \rho) \lambda(\Gamma_0 \cap \Gamma^c) & + h^2(\rho_0, \xi) \lambda(\Gamma_0^c \cap \Gamma) \\ & \geq \frac{(h(\xi_0, \rho) + h(\rho_0, \xi))^2}{2} \cdot (
		 \lambda(\Gamma_0 \cap \Gamma^c)\wedge \lambda(\Gamma_0^c \cap \Gamma)).
\end{align}
We further have 
$h(\xi_0, \xi) + h(\rho_0, \xi)  \geq h(\xi_0, \rho_0)$ and
$h(\xi_0, \rho) + h(\rho_0, \rho) \geq h(\xi_0, \rho_0)$,
by the triangle inequality, and $h(\xi_0, \rho) + h(\rho_0, \xi) \geq c_{0,n} > 0$ by Condition (C). 
Combining with the last three displays respectively, we obtain
\begin{align}
\label{eq:dummy5}
\lambda(\Gamma_0 \cap \Gamma)\wedge \lambda(\Gamma_0^c \cap \Gamma ) & \lesssim \epsilon_n^2/ c_{0,n}^2,  \\
\label{eq:dummy5-1}
\lambda(\Gamma_0^c \cap \Gamma^c) \wedge \lambda(\Gamma_0 \cap \Gamma^c ) & \lesssim \epsilon_n^2/ c_{0,n}^2,  \\
\label{eq:dummy6}
\lambda(\Gamma_0 \cap \Gamma^c)\wedge \lambda(\Gamma_0^c \cap \Gamma) & \lesssim \epsilon_n^2/c_{0,n}^2, 
\end{align}
whenever $d_n^2(\theta_0, \theta) \leq \epsilon_n^2$. By adding~\eqref{eq:dummy5} and~\eqref{eq:dummy5-1} to~\eqref{eq:dummy6}, we derive
\begin{equation}
\label{eq:dummy2.0216}
\lambda(\Gamma_0) \wedge \lambda(\Gamma_0^c \cap \Gamma )  \lesssim \epsilon_n^2/ c_{0,n}^2,   \quad
\lambda(\Gamma_0^c)\wedge \lambda(\Gamma_0 \cap \Gamma^c ) \lesssim \epsilon_n^2/ c_{0,n}^2 .
\end{equation}
Since $\Gamma_0$ is fixed with $\lambda(\Gamma_0) > 0$ and $\lambda(\Gamma_0^c) > 0$ by the assumption, \eqref{eq:dummy2.0216} implies that $\lambda(\Gamma_0^c \cap \Gamma) \lesssim \epsilon_n^2/ c_{0,n}^2 $, and $\lambda(\Gamma_0 \cap \Gamma^c) \lesssim \epsilon_n^2/ c_{0,n}^2 .$ Consequently $\lambda(\Gamma_0 \bigtriangleup \Gamma)  = \lambda(\Gamma_0^c \cap \Gamma) + \lambda(\Gamma_0 \cap \Gamma^c)
\lesssim \epsilon_n^2/ c_{0,n}^2$, which completes the proof.
\end{proof}

\begin{proof}[Proof of Theorem~\ref{th:random.series}]
We verify equations~\eqref{eq:priormass}, \eqref{eq:sieve} and~\eqref{eq:entropy} in Theorem~\ref{th:rate}. Since $\| \gamma_0 -  \bs{\beta}^T_{0, J_n} \bsxi \|_{\infty} \leq \epsilon_n^2/2$, we have 
\begin{eqnarray*}
\lefteqn{ \Pi\{ \gamma: \gamma = \bs{\beta}^T \bsxi, \| \gamma - \gamma_0\|_{\infty} \leq  \epsilon_n^2  \}}\nonumber \\   &&\geq   \Pi( J = J_n) \Pi(\| \bs{\beta}^T \bsxi - \bs{\beta}_0^T \bsxi \|_{\infty} \leq \epsilon_n^2/2 | J = J_n) \\
 &&\geq    \Pi(J = J_n) \Pi(\| \bs{\beta} - \bs{\beta}_0 \|_1 \leq t_3^{-1} J^{-t_4} \epsilon_n^2/2 | J = J_n) \label{eq:dummy20},
\end{eqnarray*}
where the last step follows because 
$
\| \bs{\beta}_{1,J}^T \bsxi - \bs{\beta}_{2,J}^T \bsxi \|_{\infty} \leq \|\bs{\beta}_{1,J} - \bs{\beta}_{2,J}\|_1 \underset{ 1 \leq j \leq J}{\max} \|\xi_j\|_{\infty}
 \leq  t_3 J^{t_4} \|\bs{\beta}_{1,J} - \bs{\beta}_{2,J}\|_1
$
according to the triangle inequality and Assumption (D). Therefore, we prove equation~\eqref{eq:priormass} by noting that $-\log \Pi\{ \gamma = \bs{\beta}^T \bsxi: \| \gamma - \gamma_0\|_{\infty} \leq \epsilon_n^2 \} \leq -\log \Pi(J = J_n) -\log \Pi(\| \bs{\beta} - \bs{\beta}_0 \|_1 \leq  t_3^{-1} J^{-t_4} \epsilon_n^2/2 | J = J_n)
\lesssim J_n \log J_n + J_n \log(J_n/\epsilon_n)  \lesssim  J_n \log J_n + J_n \log n \lesssim n \epsilon_n^2.$

Considering the sieve $\Sigma_n = \{\gamma: \gamma = \bs{\beta}^T \bsxi, \bs{\beta} \in \bb{R}^j, j \leq J_n, \|\bs{\beta}\|_{\infty} \leq \sqrt{n/C} \}$, the estimate of the prior mass of the complement of the sieve is given by $\Pi(\gamma: \gamma \notin \Sigma_n) \leq \Pi(J > J_n) + J_n e^{-n}$ (see equation (2.10) in~\cite{Shen+Ghosal:14}). For any $a, b > 0$, we have $ \log (a + b) \leq \log (2 (a \vee b))$, leading to $ - \log(a + b) \geq -\log 2 + (-\log a) \wedge( - \log b).$ \comment{Noting that $-\log \Pi(J > J_n) \gtrsim J_n \log J_n$, and $-\log (J_n e^{-n}) = n - \log J_n \geq n - \log n \asymp n \gtrsim  J_n \log  J_n$ (because $J_n \log J_n \lesssim n \epsilon_n^2 \lesssim n$)}, we then obtain $
-\log \Pi(\gamma: \gamma \notin \Sigma_n) \gtrsim J_n \log J_n \gtrsim n \epsilon_n^2$ verifying equation~\eqref{eq:sieve}. 

\comment{ 
For the entropy calculation, we first notice that for any $\gamma \in \Sigma_n$, we have $\|\gamma\|_{\infty} = \| \bs{\beta}^T \bsxi \|_{\infty} \leq \underset{1 \leq j \leq J_n}{\max}\|\xi_j\|_{\infty} \|\bs{\beta}\|_{1} \lesssim J_n^{t_4} \sqrt{n} \lesssim n ^{t_4 + 1/2}$, which is an upper bound for $\sigma_n$.  We  estimate the packing number $D(\epsilon_n^2\comment{/\sigma_n^{d- 1}}, \Sigma_n, \|\cdot\|_{\infty})$ by 
\begin{align}
 \sum_{j=1}^{J_n} D(\epsilon_n^2\comment{/\sigma_n^{d- 1}}, \{\bs{\beta} \in \bb{R}^j, \|\bs{\beta}\|_{\infty} \leq \sqrt{n/C} \}, \|\cdot\|_{\infty}) 
\leq \sum_{j = 1}^{J_n} \left(1 + \frac{ \sqrt{n/C} \comment{\sigma_n^{d- 1}}}{\epsilon_n^2}\right)^{j}, 
\end{align} }
which is further bounded by $J_n  (1 + { \sqrt{n/C} \comment{\sigma_n^{d- 1}}}/{\epsilon_n^2})^{J_n}$. Equation~\eqref{eq:entropy} follows since $\log N(\epsilon_n^2\comment{/\sigma_n^{d- 1}}, \Sigma_n, \|\cdot\|_{\infty}) \leq \log D(\epsilon_n^2\comment{/\sigma_n^{d- 1}}, \Sigma_n, \|\cdot\|_{\infty}) \lesssim \log J_n + J_n \log n + J_n \log (1/\epsilon_n^2)  \lesssim J_n \log n \lesssim n \epsilon_n^2.$ 
\end{proof}

\begin{proof}[Proof of Theorem~\ref{thm:GP.rate}]
	
	We first obtain the contraction rate for deterministic rescaling when 
	the smoothness level $\alpha$ is known. 
	
	Let $\mathbb{B}$ be $\bb{C}^{\alpha}(\mathbb{S}^{1})$ equipped with the $\|\cdot \|_{\infty}$ norm. Let $\phi_{\gamma_0}^a(\epsilon) =
	\underset{\gamma \in \bb{H}^a: \| \gamma - \gamma_0 \|_{\infty} \leq
		\epsilon}{\inf} \frac{1}{2} \|\gamma\|_{\bb{H}^a}^2 -\log \P(\|W^a\|_{\infty} \leq \epsilon)$ stand for the concentration function at $\gamma_0$. Note that $ \phi_0^a(\epsilon) = -\log \P(\|W^a\|_{\infty} \leq \epsilon)$.
	
	\comment{
		The selection of sieves and entropy calculation is similar to Theorem 2.1 in~\cite{van+van:08} with $\epsilon_n$ replaced by $\epsilon_n^2$ and adjustment because of the involvement of $\sigma_n$ later on. Define the sieve as $\Sigma_n = (M_n\bb{H}_1^a + \frac{1}{4} \epsilon_n^2 M_n^{-1} \bb{B}_1)$, where
		$\bb{H}_1^a$ and $\bb{B}_1$ are the unit balls of $\bb{H}^a$ and $\bb{B}$ respectively.
		Let $M_n = -2 \Phi^{-1}(\exp(- C n\epsilon_n^2))$ for a large constant $C > 1$. Then by Borell's
		inequality, we can bound $\Pi(\Sigma_n^c) \leq \Pi (\gamma \nin M_n
		\bb{H}_1^a + \epsilon_n^2 \bb{B}_1)\lesssim \exp(- C n \epsilon_n^2),$
		provided that $\phi_0^a(\frac{1}{4}\epsilon_n^2 M_n^{-1}) \leq n \epsilon_n^2$.
	}
	
	\comment{
		For the entropy calculation, first observe that $M_n^2 \lesssim n \epsilon_n^2$ since $|\Phi^{-1}(u)| \leq \sqrt{2 \log(1/u)}$ for $u \in (0, 1/2)$. We further notice that $\mathbb{H}_1^{a} \subset \mathbb{B}_1$ since by Lemma~\ref{lemma:rkhs}, for any function $h \in \mathbb{H}^{a}_1$, we have $\|h\|_{\infty}^2 \leq \{\sum_{n = -\infty}^{\infty} |b_{n, a}| e^{-2a^2} I_n(2 a^2)\}^2 \leq \sum_{n = -\infty}^{\infty} |b_{n, a}|^2 e^{-2a^2} I_n(2 a^2) \cdot \sum_{n = -\infty}^{\infty}  e^{-2a^2} I_n(2 a^2) = \|h\|_{\mathbb{H}^a}^2 \cdot 1$ (the last step uses Proposition~\ref{prop:bessel.basic} (a) by letting $z = 1$ and $x = a^2$). Therefore, the sieve $\Sigma_n$ is a subset of $(M_n + \epsilon_n^2 M_n^{-1}/4) \mathbb{B}_1$ and thus $\sigma_n = \sup \{\|\gamma\|_{\infty}: \gamma \in \Sigma_n\} \leq M_n + \epsilon_n^2 M_n^{-1}/4\leq 2 M_n$ for sufficiently large $n$. By construction of $\Sigma_n$, a $\frac{1}{4}\epsilon_n^2 M_n^{-2}$-net for $\bb{H}_1^a$ is a $\frac{1}{2}\epsilon_2^2 M_n^{-1}/2$-net for $\Sigma_n$. 
		Therefore by Lemma~\ref{lemma:entropy.lambda}, we have 
		\begin{align}
		\log N \left(\frac{\epsilon_n^2}{\sigma_n}, \Sigma_n, \|\cdot\|_{\infty}\right) & \leq  \log N \left( \frac{\epsilon_n^2}{4  M_n^2}, \mathbb{H}_1^a, \|\cdot\|_{\infty}\right) \\
		& \lesssim a \left(\log \frac{M_n^2}{2 \epsilon_n^2}\right)^2 \lesssim a (\log n)^2. 
		\end{align}
	}

	To evaluate the prior concentration probability, we proceed as follows.  Let $\Pi^a(\cdot)$ be a SEP Gaussian process with the rescaling factor $a$. By the approximation property of $\bb{H}^a$ in Lemma~\ref{lemma:approx.RHKS}, there exists $h_0 \in \bb{H}^a$ such that $\| h_0 - \gamma_0 \|_{\infty}
	\lesssim a^{-\alpha}$ and \comment{$ \|h_0 \|_{\bb{H}^a}^2 \lesssim
		a$}. Therefore, if $a^{-\alpha} \leq \epsilon_n^2/2$, then
	\comment{ 
		$$ \Pi^a(\gamma: \| \gamma - \gamma_0 \|_{\infty} \leq \epsilon_n^2)
		\geq \Pi^a(\gamma: \| \gamma - h_0 \|_{\infty} \leq \epsilon_n^2/2)
		\geq \exp\{ - \phi_{0}^a(\epsilon_n^2/4) \},$$ leading to
		$-\log \Pi^a(\gamma: \| \gamma - \gamma_0 \|_{\infty} \leq \epsilon_n^2)
		\lesssim \phi_0^a(\epsilon_n^2/4).$
	}
	
	Note that $ \phi_0^a(\epsilon) \lesssim a (\log(a/\epsilon))^2$ (Lemma~\ref{lemma:small.ball.prob}). To satisfy the conditions in Theorem~\ref{th:rate}, we choose $a = a_n$ depending on the sample size such that
	$
	\epsilon_n^2 \asymp a_n^{-\alpha},$ and $a_n (\log n)^{2} \asymp n \epsilon_n^2.
	$
	Then the posterior contraction rate is obtained as $\epsilon_n^2 = n^{-\alpha/(\alpha + 1)} (\log n)^{-2\alpha /(\alpha
		+ 1)}$, with $
	a_n = n^{1/(\alpha + 1)} (\log n)^{-2/(\alpha + 1)}.$ 
	
	Now consider the random rescaling when the smoothness $\alpha$ is unknown.  
	The established properties of the RKHS of $W^a$ from Lemma~\ref{lemma:rkhs} to Lemma~\ref{lemma:constant.rkhs} are parallel to the case when a GP is indexed by $[0, 1]^{d - 1}$ with a stationary kernel, therefore, we can directly follow the argument in the proof of Theorem 3.1 in~\citep{van+van:09}. There is need for a slight modification since the nesting property given in Lemma~\ref{lemma:nesting.rkhs} has a universal constant $c$, but this does not affect the asymptotic rate. The posterior contraction rate $\epsilon_n^2$ is thus obtained.  
\end{proof}

\begin{proof}[Proof of Lemma~\ref{lemma:bd.isometric}]
	For any $f'_{s'} \in \mathbb{H}'$, there is a series of $\alpha_i$'s such that $f'_{s'}(\cdot) = \sum \alpha_i K(s', \cdot)$, and there exists $s \in [0,1]^{d - 1}$ such that $s' = Qs$. Let $f_{s} = \sum \alpha_i G(s, \cdot) \in \mathbb{H}$. Therefore, the map $\phi: \mathbb{H} \rightarrow \mathbb{H}', \phi f_{s} = f'_{Q s}$ is surjective. 
	If there exists another $s_2 \in [0, 1]^{d - 1}$ such that $s' = Qs_2$ and $f_{s_2} = \sum \alpha_i G(s_2, \cdot) \in \mathbb{H}$, then $f_{s_2} = f_{s}$ because $f_s = \sum \alpha_i K(Qs, Q\cdot) = \sum \alpha_i K(Qs_2, Q\cdot) = f_{s_2}$. Therefore, the map $\phi$ is bijective. In addition, the definition of $G(\cdot, \cdot): G(s_1, s_2) = K(Qs_1, Qs_2)$ also implies that the map $\phi$ is distance preserving when using $\|\cdot\|_{\mathbb{H}}$ and $\|\cdot\|_{\mathbb{H}'}$ as the norms. Therefore $\phi$ extends to an isometric isomorphism between $\mathbb{H}$ and $\mathbb{H}'$. The map $\phi$ also preserves the distance if we use the $\|\cdot\|_{\infty}$ norm. 
\end{proof}

\begin{proof}[Proof of Lemma~\ref{lemma:spectral.measure}]
The generating function of $I_n(2x)$ is given in Proposition~\ref{prop:bessel.basic} (a): $e^{x(z + 1/z)} = \sum_{n = -\infty}^{\infty} I_n(2x) z^n$ for $z \in \mathbb{C}$ and $z \neq 0$. Let $x = a^2$ and $z = e^{-2\pi i t}$. Noting that $z + 1/z - 2 = -4 \sin^2(\pi t)$, we then have $\phi_a(t) =  \exp\{- 4 \scale^2 \sin^2(\pi t) \} = \sum_{n = -\infty}^{\infty} e^{-2\scale^2} I_n(2\scale^2) e^{-2 n \pi i t}.$ By defining $\mu_a$ as the discrete measure in Lemma~\ref{lemma:spectral.measure}, we obtain that 
$\phi_a(t)= \int e^{-its} d \mu_a(s).$

Furthermore, $G_a(t, t') = \phi_a(t - t') = \sum_{n = -\infty}^{\infty} e^{-2\scale^2} I_n(2\scale^2) e^{-2n\pi i (t - t')}. $ In view of the orthonormality of $\{e^{2n\pi i}: n \in \mathbb{Z}\}$, we obtain that the integral $\int_0^1 K(t, t') e^{2\pi n i t} dt = e^{-2\scale^2} I_n(2\scale^2) e^{2\pi n i t'}$ for any $n \in \mathbb{Z}$. Hence the trigonometric polynomials $\{1, \cos 2n\pi t, \sin 2n\pi t: n\geq 1\}$ form the eigenfunction basis of the covariance kernel $G_a(\cdot, \cdot)$, where the corresponding eigenvalues are $\{ e^{-2\scale^2} I_n(2\scale^2), e^{-2\scale^2} I_n(2\scale^2): n\geq 0\}.$
\end{proof} 

\begin{proof}[Proof of Lemma~\ref{lemma:rkhs}]
Since the measure $\mu_{\scale}$ has subexponential tail, Lemma 2.1 in~\cite{van+van:07} is directly applicable. Using the discrete measure $\mu_{\scale}$ defined in Lemma~\ref{lemma:spectral.measure}, the proof follows. 
\end{proof} 

\begin{proof}[Proof of Lemma~\ref{lemma:approx.RHKS}]
	\comment{
By Jackson's theorem~\citep{Jackson:30}, for any function $w \in \mathbb{C}^{\alpha}[0, 1]$ and any $N$, there exists a complex-valued sequence $(s_n)$ such that $\|w - h_N^* \|_{\infty} \leq A_w N^{-\alpha}$, where $h_N^*(x) = \sum_{n = -N}^{N} s_n e^{-i2\pi n x}$ and $A_w$ is a constant depending only on $w$. Let $h_N$ be the real part of $h_N^*$
, then clearly this $h_N \in \mathbb{H}^{\scale}$ and by Lemma~\ref{lemma:rkhs} its square norm is 
\begin{equation}
\|h_N\|^2_{\mathbb{H}^{\scale}} = \sum_{n = -N}^{N}  \frac{1}{e^{-2\scale^2}I_n(2\scale^2)} \left|\int_0^1 h_N(t) e^{it2\pi n} dt \right|^2.
\end{equation}
Using the monotonicity of $I_n(2\scale^2)$ for fixed $\scale$ (Proposition~\ref{prop:bessel.basic} (b) and (c2)), we have $e^{-2\scale^2}I_n(2\scale^2) \geq e^{-2\scale^2}I_N(2\scale^2)$ for $n = 0, \pm 1, \ldots, \pm N$. Therefore, 
\begin{equation}
\|h_N\|^2_{\mathbb{H}^{\scale}} \leq \frac{1}{e^{-2\scale^2}I_N(2\scale^2)} \sum_{n = -N}^{N} \left|\int_0^1 h_N(t) e^{it2\pi n} dt \right|^2 \leq \frac{1}{e^{-2\scale^2}I_N(2\scale^2)} \|h_N\|_2^2.
\end{equation}
Let $N \asymp \scale$, then $N^{-\alpha} \asymp \scale^{-\alpha}$, and ${e^{-2\scale^2}I_N(2\scale^2)} \asymp \scale^{-1}$ according to Proposition~\ref{prop:bessel.limit}. On the other hand, $\|h_N\|^2_2 \leq \|h_N^*\|^2_2 \rightarrow \|w\|_2^2$ which is finite. Therefore, it follows that $ \|h_N\|^2_{\mathbb{H}^{\scale}} \leq D_w \scale$ for a constant $D_w$ depending only on $w$. 
}
\end{proof}

\begin{proof}[Proof of Lemma~\ref{lemma:entropy.lambda}]
We construct an $\epsilon$-net of piecewise polynomials over $\mathbb{H}_1^{\scale}$ as in the proof of Lemma 2.3 in~\cite{van+van:07}. Let $\beta_{2k}^{\scale}$ to be the $2k$th absolute moments of the spectral measure $\mu_{\scale}$, i.e. $
\beta_{2k}^\scale = \sum_{n = -\infty}^{\infty} e^{-2\scale^2} I_n(2\scale^2) (2\pi |n|)^{2k}. 
$
In \cite{van+van:07}, $\beta_{2k}^{\scale} = \beta_{2k}^1/\scale^{2k}$, but here we do not have this simple scaling relationship and need to work with $\beta_{2k}^\scale$ directly. Following the same construction in~\cite{van+van:07} but use $\beta_{2k}^\scale$, we can obtain that 
\begin{equation}
\log N( 2\epsilon, \mathbb{H}^{\scale}_1, \|\cdot\|_{\infty}) \leq \left(\frac{1}{\delta} + 1\right) \sum_{j = 0}^{k - 1} \log \left(2 \sqrt{\beta_{2j}^\scale} \frac{\delta^j}{j!} \cdot \frac{k}{\epsilon} + 1\right), 
\end{equation}
where $k \in \mathbb{N}$ such that $\sqrt{\beta_{2k}^{\scale}} d^k/k! \leq \epsilon$ for given $\epsilon, \delta > 0$. 

For any $\scale \geq 0$ and $j \geq 0$, applying  Proposition~\ref{prop:bessel.moment} with $x = \scale^2$, we get 
\begin{equation}
\beta_{2j}^{\scale} = (2 \pi)^{2j} \sum_{n = -\infty}^{\infty} e^{-2\scale^2} I_n(2\scale^2) n^{2j} \leq (2 \pi)^{2j} \frac{(4j)!}{(2j)!}  \max(\scale^{2j}, 1).
\end{equation}
Choosing $\delta = 1/\{16 \pi \max(\scale, 1)\}$, we have $
\sqrt{\beta_{2j}^{\scale}} {\delta^j}/{j!} \leq 8^{-j}\sqrt{(4j)!/(2j)!}/(j!).$ 
Using Stirling's approximation with explicit bounds: $\sqrt{2\pi} n^{n + 1/2} e^{-n} \leq n! \leq e n^{n + 1/2} e^{-n}$ for all positive integers $n$, we obtain for all $j \geq 1$, 
\begin{equation}
\sqrt{\beta_{2j}^{\scale}} \frac{\delta^j}{j!} \leq  \frac{\sqrt{e} (4j)^{2j + 1/4} e^{-2j}}{(2\pi)^{3/4}(2j)^{j + 1/4}e^{-j} j^{j + 1/2} e^{-j}\cdot 8^j} \leq \sqrt{e} (2\pi)^{-3/4} 2^{1/4}. 
\end{equation}
When $j = 0$, $\sqrt{\beta_{2j}^{\scale}} {\delta^j}/{j!} \leq 1$. Therefore, we have a uniform bound for $\sqrt{\beta_{2j}^{\scale}}{\delta^j}/{j!}, j \geq 0$. Let $k \sim \log(1/\epsilon)$, we then have 
$
\log N( 2\epsilon, \mathbb{H}^{\scale}_1, \|\cdot\|_{\infty}) \lesssim ( {1}/{\delta} + 1 ) k \log \left({k}/{\epsilon}\right) \lesssim \max(\scale, 1) \{\log({1}/{\epsilon})\}^2,$ concluding the proof.  
\end{proof} 

\begin{proof}[Proof of Lemma~\ref{lemma:small.ball.prob}]
In view of Lemma~\ref{lemma:entropy.lambda}, the proof follows the argument in Lemma 4.6 in~\citep{van+van:09} by letting $d = 2$. 
\end{proof}

\begin{proof}[Proof of Lemma~\ref{lemma:nesting.rkhs}]
We need to show that if $a \leq b$ and $f \in \sqrt{a}\mathbb{H}_1^a$, i.e., $\|f\|_{\mathbb{H}^{a}} \leq \sqrt{a}$, then $\|f\|_{\mathbb{H}^{b}} \leq \sqrt{c b}. $ By Lemma~\ref{lemma:rkhs}, it is sufficient to show that $ a e^{-2 a^2}I_n(2 a^2) \leq  c b e^{-2 b^2}I_n(2 b^2)$ for any $n \geq 0$. Consider the function $f_n(x) = \sqrt{x} e^{-x} I_n(x)$. We only need to show that $f_n(\sqrt{2a})/f_n(\sqrt{2b}) \leq c$. By Proposition~\ref{prop:bessel.increasing}, we have $f_n(\sqrt{2a})/f_n(\sqrt{2b}) \leq 1$ for $n \geq 2$. 

When $n = 0$, Proposition~\ref{prop:bessel.increasing} indicates that  $f_0(x)$ is increasing in $x$ for $x \leq 1/2$. For $x \in [1/2, \infty)$,   Proposition~\ref{prop:bessel.limit} shows that $f_0(x)$ is bounded above and below since the function $f_0(x)$ is continuous, positive and converges to $1/\sqrt{2 \pi} > 0$ as $x \rightarrow \infty$ (meaning that both $f_0(x)$ and $1/f_0(x)$ are bounded above). In other words, there exists constants $c_1, c_2 > 0$ such that $f_0(x) \in (c_1, c_2)$ for $x \in [1/2, \infty)$. Therefore, if $b \leq 1/8$, we have $f_0(\sqrt{2a}) \leq f_0(\sqrt{2b})$. If $b > 1/8$, we then have $f_0(\sqrt{2a})/f_0(\sqrt{2b}) \leq \max\{f_0(1/2), c_2\}/c_1 < \infty$. Consequently, for $a \leq b$, we have $f_0(\sqrt{2a}) \leq \max\{f_0(1/2)/c_1, c_2/c_1, 1\}  f_0(\sqrt{2b})$. Similarly when $n = 1$, the function $f_1(x)$ is increasing in $x$ for $x \leq 3/2$ and there exists two constants $c_3, c_4 > 0$ such that $f_1(x) \in (c_3, c_4)$ for $x \in [3/2, \infty)$. Consequently, we have $f_1(\sqrt{2a}) \leq \max\{f_1(3/2)/c_3, c_4/c_3, 1\}  f_1(\sqrt{2b})$. We conclude the proof by letting $c = \max\{f_0(1/2)/c_1, c_2/c_1, f_1(3/2)/c_3, c_4/c_3, 1\}$.
\end{proof}

\begin{proof}[Proof of Lemma~\ref{lemma:constant.rkhs}]
By Lemma~\ref{lemma:rkhs}, an element in $\mathbb{H}_1^{\scale}$ can be viewed as the real part of $h(t) = \sum_{n = -\infty}^{\infty} e^{-i t 2 \pi n} b_{n,a} e^{-2\scale^2} I_n (2\scale^2)$, where $b_{n,a}$ satisfies that $\sum_{n = -\infty}^{\infty} |b_{n,a}|^2 e^{-2\scale^2} I_n(2\scale^2) \leq 1. $ Applying the Cauchy-Schwartz inequality twice, we have $|h(0)|^2 \leq \sum_{n = -\infty}^{\infty} e^{-2\scale^2} I_n(2\scale^2) = 1$ and $|h(t) - h(0)|^2 \leq t^2 \sum_{n = -\infty}^{\infty} (2\pi n)^2 e^{-2\scale^2} I_n(2\scale^2).$ By Proposition~\ref{prop:2nd.moment}, we have $|h(t) - h(0)|^2 \leq 8 \pi^2 \scale^2 t^2$. This concludes the proof. 
\end{proof}

\section{Modified Bessel function of the first kind}
\label{section:bessel}
 
The modified Bessel function of the first kind are solutions to the modified Bessel's equation~\citep{Watson:95}. Throughout the paper, we consider integer orders and positive argument, i.e. $I_n(x)$ with $n \in \mathbb{Z}$ and $x > 0$. We first introduce some basic properties of $I_n(x)$ in Proposition~\ref{prop:bessel.basic}, for easy reference.  

\begin{proposition}
\label{prop:bessel.basic}
The modified Bessel functions $I_n(2x)$ has the following properties:
\begin{itemize}
\item[(a)] Generating functions. For $x \in \mathbb{R}$, \begin{equation}
G(x, z) =: e^{x(z + 1/z)} = \sum_{n = -\infty}^{\infty} I_n(2x) z^n, \quad z \in \mathbb{C}, z \neq 0. 
\end{equation}
\item[(b)] Symmetry about the order: $I_n(2x) = I_{-n}(2x)$ for $x \in \mathbb{R}$ and $n \in \mathbb{Z}$. 
\item[(c)] For $n \geq 0$ and fixed $x > 0$, the following properties hold:
	\begin{itemize}
	\item[(c1)] Series representation: $
	I_n(2x) =  x^n \sum_{j = 0}^{\infty} x^{2j}/{(j! (n + j)!)} $. 
	\item[(c2)] $I_n(2x)$ is positive and strictly decreasing in $n$.
	\item[(c3)] $I_{n}(2x) \leq I_0(2x) {(2x)^n/}{n!}.$ 
	\end{itemize}

\end{itemize}
\end{proposition}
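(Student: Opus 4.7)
The plan is to take the Taylor series (c1) as the defining expansion of $I_n(2x)$ for non-negative integer $n$ and real $x$, and read off the remaining claims from it. Absolute convergence for all $x$ follows from the ratio test, and positivity of every summand when $x>0$ yields both (c1) and the positivity half of (c2) at once.

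For (a), I would compute the Cauchy product
\begin{equation*}
e^{xz}\cdot e^{x/z} \;=\; \sum_{k,l\ge 0} \frac{x^{k+l}\, z^{k-l}}{k!\, l!},
\end{equation*}
then extract the coefficient of $z^n$ for each $n\in\mathbb{Z}$ by summing over $k-l=n$. For $n\ge 0$ this gives $\sum_{l\ge 0} x^{n+2l}/((n+l)!\, l!)$, which matches $I_n(2x)$ by (c1); for $n<0$ the same computation produces the analogous expression with $n$ replaced by $|n|$. Claim (b) then follows at once, either from the invariance of $z+1/z$ under $z\mapsto 1/z$ combined with uniqueness of Laurent coefficients in (a), or by reading the symmetry directly from the two cases of the Cauchy-product calculation above.

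For the strict monotonicity in (c2), which I expect to be the main technical obstacle, I would use a two-step argument. First, the base case $I_0(y)>I_1(y)$ for $y>0$ follows from the integral representation $I_n(y)=\pi^{-1}\int_0^\pi e^{y\cos\theta}\cos(n\theta)\,d\theta$, since
\begin{equation*}
I_0(y)-I_1(y) \;=\; \frac{1}{\pi}\int_0^\pi e^{y\cos\theta}(1-\cos\theta)\,d\theta \;>\; 0.
\end{equation*}
Second, I would induct on $n$ using the classical three-term recurrence $I_{n-1}(y)-I_{n+1}(y)=(2n/y)\, I_n(y)$ in tandem with the Tur\'an-type inequality $I_n^2(y)>I_{n-1}(y)\,I_{n+1}(y)$ for $n\ge 1$, which itself can be derived from the series in (c1) by a Cauchy--Schwarz--type rearrangement. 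Taken together these force the ratios $I_{n+1}(y)/I_n(y)$ to be strictly decreasing in $n$ and bounded above by $I_1(y)/I_0(y)<1$, giving $I_n(y)>I_{n+1}(y)$ for every $n\ge 0$; setting $y=2x$ recovers the stated claim.

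Finally, (c3) is a direct term-wise bound on (c1): since $\binom{n+j}{j}\ge 1$ we have $1/(n+j)!\le 1/(n!\, j!)$, hence
\begin{equation*}
I_n(2x) \;=\; x^n\sum_{j=0}^\infty\frac{x^{2j}}{j!\,(n+j)!}
\;\le\; \frac{x^n}{n!}\sum_{j=0}^\infty\frac{x^{2j}}{(j!)^2}
\;=\; \frac{x^n}{n!}\, I_0(2x)
\;\le\; \frac{(2x)^n}{n!}\, I_0(2x),
\end{equation*}
where the last step uses $x^n\le(2x)^n$ for $x\ge 0$.
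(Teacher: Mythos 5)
Your proposal is correct, but for the substantive parts (c2) and (c3) it takes a genuinely different route from the paper. The paper treats (a), (b), (c1) as standard citations (as you essentially do, modulo your choice to derive (a) from the Cauchy product and (b) from the invariance of $z+1/z$ under $z\mapsto 1/z$, both of which are fine), and then obtains (c2) and (c3) in one stroke from the Amos (1974) upper bound
\begin{equation}
r_n(x)=\frac{I_{n+1}(2x)}{I_n(2x)}\leq \frac{2x}{n+1/2+\{4x^2+(n+1/2)^2\}^{1/2}}<1,
\end{equation}
which gives strict monotonicity immediately and, via $r_n(x)\le 2x/(n+1)$ and telescoping the product $\prod_{k=0}^{n-1}r_k$, gives (c3). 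You instead prove (c2) by establishing the base case $I_0>I_1$ from the integral representation and then propagating it with the Tur\'an-type inequality $I_n^2>I_{n-1}I_{n+1}$ (which is what actually forces the ratios $r_n$ to decrease in $n$; the three-term recurrence you mention alone would only compare orders two apart), and you prove (c3) by the termwise comparison $1/(n+j)!\le 1/(n!\,j!)$ in the series, which in fact yields the sharper bound $I_n(2x)\le I_0(2x)\,x^n/n!$. The trade-off: the paper's argument is shorter but rests entirely on a cited ratio bound, whereas yours is more elementary and self-contained for (c3) and gives a better constant there; the one ingredient you assert without carrying out is the Tur\'an inequality, whose derivation from the series by a coefficientwise rearrangement is classical but not entirely trivial --- it sits at the same level of ``cited external fact'' as the Amos bound the paper relies on, so this is not a gap in substance.
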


\begin{proof}
Properties (a), (b) and (c1) can be found in most literature on Bessel functions, e.g., see Chapter II of~\cite{Watson:95} and 8.51--8.52 of~\cite{Jeffrey+Zwillinger:07}. The positivity of $I_n(2x)$ follows its series representation. For (c2) and (c3), we let $r_n(x) = I_{n+1}(2x)/I_n(2x)$ where $n \geq 0$ and $x > 0$. Then by~\cite{Amos:74}, we have 
\begin{equation}
\label{eq:dummy2.2.9}
r_n(x) \leq \frac{2 x}{n + 1/2 + (4 x^2 + (n + 1/2)^2)^{1/2}} < 1, 
\end{equation}
leading to the monotonicity in (c2). Equation~\eqref{eq:dummy2.2.9} also implies that $r_n(x) \leq 2x/(n + 1)$ for all $n \geq 0$, and hence $I_{n} (2 x ) / I_0(2 x) = \prod_{k = 0}^{n - 1} r_k \leq  (2x)^n / n!$, concluding (c3).
\end{proof}

The estimate below is obtained when $x\to\infty$ with $n$ being fixed or $n\to\infty$ in such a way that $n x^{-1/2}$ tends to a finite nonnegative number. 

\begin{proposition}
\label{prop:bessel.limit}
Let $n\in \mathbb{Z}$ and $n x^{-1/2}  \rightarrow c$ for some constant $c \geq 0$ as $x \rightarrow \infty$. Then $\sqrt{x} e^{-x}I_n(x) \rightarrow (2 \pi)^{-1/2} e^{-\frac{c^2}{2}}$ as $x \rightarrow \infty$. 
\end{proposition}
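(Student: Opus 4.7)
The plan is to exploit the integral representation of $I_n$ at integer orders, rescale the integration variable by $\sqrt{x}$, and recognize the limit as a Gaussian Fourier integral.

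First I would start from the Schl\"afli--Bessel integral
\[
I_n(x) \;=\; \frac{1}{\pi}\int_0^{\pi} e^{x\cos\theta}\cos(n\theta)\,d\theta,
\]
which is valid for every integer $n$ (the usual second term vanishes because $\sin(n\pi)=0$). Multiplying by $\sqrt{x}\,e^{-x}$ and substituting $\theta = u/\sqrt{x}$ (so $d\theta = du/\sqrt{x}$) yields
\[
\sqrt{x}\,e^{-x} I_n(x) \;=\; \frac{1}{\pi}\int_0^{\pi\sqrt{x}} e^{\,x(\cos(u/\sqrt{x})-1)}\cos\!\bigl(n u/\sqrt{x}\bigr)\,du.
\]
The idea is that, pointwise in $u$, the exponent converges to $-u^{2}/2$ (because $x(\cos(u/\sqrt{x})-1) = -u^2/2 + O(u^4/x)$) and the cosine factor converges to $\cos(cu)$ by hypothesis $n/\sqrt{x}\to c$.

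Next I would justify passing to the limit by dominated convergence. The elementary inequality $1-\cos s \ge 2s^2/\pi^2$ for $|s|\le \pi$ gives
\[
x\bigl(\cos(u/\sqrt{x})-1\bigr) \;\le\; -\tfrac{2}{\pi^{2}}\,u^{2}
\qquad \text{for } 0\le u \le \pi\sqrt{x},
\]
so the integrand is bounded in absolute value by the integrable function $e^{-2u^{2}/\pi^{2}}$, uniformly in $x$. Extending integration to $[0,\infty)$ (which only adds a zero contribution in the limit because the integrand is supported on $[0,\pi\sqrt{x}]$), and applying dominated convergence, I obtain
\[
\sqrt{x}\,e^{-x} I_n(x) \;\longrightarrow\; \frac{1}{\pi}\int_0^{\infty} e^{-u^{2}/2}\cos(cu)\,du.
\]

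Finally I would evaluate the limit integral: by the standard Gaussian Fourier transform
$\int_{-\infty}^{\infty} e^{-u^{2}/2}e^{icu}\,du = \sqrt{2\pi}\,e^{-c^{2}/2}$,
so
$\int_0^{\infty} e^{-u^{2}/2}\cos(cu)\,du = \tfrac{1}{2}\sqrt{2\pi}\,e^{-c^{2}/2}$,
which gives exactly $(2\pi)^{-1/2}e^{-c^{2}/2}$ after dividing by $\pi$.

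The main technical point is ensuring the uniform domination on $[0,\pi\sqrt{x}]$ holds regardless of how $n$ grows with $x$; since $|\cos(n\theta)|\le 1$ this reduces to controlling only the exponential factor, which the estimate $1-\cos s\ge 2s^{2}/\pi^{2}$ handles cleanly. Everything else is a routine Laplace-method calculation.
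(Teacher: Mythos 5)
Your proof is correct and follows essentially the same route as the paper: the Schl\"afli integral representation for integer order, a $\sqrt{x}$-rescaling of the integration variable, dominated convergence, and the Gaussian Fourier integral. The only (cosmetic) difference is your choice of the linear substitution $\theta = u/\sqrt{x}$ with the bound $1-\cos s \ge 2s^2/\pi^2$, which lets you dominate over the whole interval at once, whereas the paper splits off $[\pi/2,\pi]$ and substitutes $u = 2\sqrt{x}\sin(t/2)$; both yield the same limit by the same mechanism.
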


\begin{proof}
The integral formula for the modified Bessel function of the first kind~\citep[page 181]{Watson:95} implies that, for $n \in \mathbb{Z}^+$, $I_n(x)$ is 
\begin{equation}
\frac{1}{\pi} \int_0^\pi e^{x \cos t} \cos(nt) dt = \frac{1}{\pi} \int_0^{{\pi}/{2}} e^{x \cos t} \cos(nt) dt + \frac{1}{\pi} \int_{{\pi}/{2}}^\pi e^{x \cos t} \cos(nt) dt. 
\end{equation} 
The second integral is bounded since $\cos t \leq 0$ for $t \in [\pi/2, \pi]$. For the first integral, we set $u = 2 \sqrt{x} \sin(t/2)$, then we have $I_n(x) = e^x /(\pi \sqrt{x}) \int_{0}^{\infty} f(u, x) du + O(1)$, where 
$$f_x(u) = e^{-\frac{u^2}{2}} \cos \left(2n \arcsin\left(\frac{u}{2 \sqrt{x}}\right)\right) \left(1 - \frac{u^2}{4x}\right)^{-1/2} \Ind(0 < u < \sqrt{2 x}).$$ 
If $n x^{-1/2} \rightarrow c$ for a constant $c \geq 0$, we have $f_x(u) \rightarrow e^{-{u^2}/{2}} \cos(c u)$. Note that for any $x > 0$, we have $|f_x(u)| \leq \sqrt{2} e^{-{u^2}/{2}}$ which is integrable. According to the dominated convergence theorem, we obtain that  
\begin{equation}
e^{-x} I_n(x) \sqrt{x} \rightarrow \pi^{-1} \int_0^{\infty} e^{-{u^2}/{2}} \cos(c u) du = (2 \pi)^{-1/2} e^{-{c^2}/{2}}, 
\end{equation}
where the last step uses the real part of the characteristic function of a standard normal. 


%
\end{proof} 

\begin{proposition}
\label{prop:bessel.moment}
For any $x \geq 0$ and $j = 0, 1, 2, \ldots$, we have 
\begin{equation}
\sum_{n = -\infty}^{\infty} e^{-2x} I_n(2x) n^{2j} \leq \frac{(4j)!}{(2j)!}  \max(x^j, 1). 
\end{equation}
\end{proposition}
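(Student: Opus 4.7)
The plan is to identify the sum as the $2j$-th moment of a Skellam random variable, compute that moment from its cumulant generating function, and then close with two elementary inequalities.

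First, substituting $z=e^{i\theta}$ in the generating function of Proposition~\ref{prop:bessel.basic}(a) yields
\[
\sum_{n=-\infty}^{\infty} e^{-2x} I_n(2x)\, e^{in\theta} \;=\; e^{-2x(1-\cos\theta)}.
\]
Setting $\theta=0$ shows that $\{e^{-2x} I_n(2x): n\in\ZZ\}$ is a probability mass function on $\ZZ$; in fact it is the law of $Z=X-Y$ for independent $X,Y\sim\Poi(x)$ (the Skellam$(x,x)$ distribution), since $\E[e^{i\theta(X-Y)}]=e^{x(e^{i\theta}-1)}e^{x(e^{-i\theta}-1)}=e^{-2x(1-\cos\theta)}$. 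Hence the quantity to be bounded equals $\E[Z^{2j}]$.

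Next, I would read the cumulants of $Z$ off
\[
\log \E[e^{tZ}] \;=\; 2x(\cosh t - 1) \;=\; \sum_{k\ge 1}\frac{2x}{(2k)!}\, t^{2k},
\]
which shows that every odd cumulant of $Z$ vanishes and $\kappa_{2k}(Z)=2x$ for all $k\ge 1$. The moment-cumulant identity (complete Bell polynomial) then yields
\[
\E[Z^{2j}] \;=\; \sum_{\pi\in\Mod(2j)} \prod_{B\in\pi}\kappa_{|B|}(Z) \;=\; \sum_{k=1}^{j} N(2j,k)\,(2x)^{k},
\]
where $\Mod(2j)$ is the set of partitions of $\{1,\ldots,2j\}$ and $N(2j,k)$ counts those partitions whose $k$ blocks are all of even cardinality (partitions with any odd block contribute $0$).

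Finally, two routine estimates close the argument. A case split on whether $x\ge 1$ or $x<1$ gives $(2x)^{k}\le 2^{j}\max(x^{j},1)$ for every $1\le k\le j$. The combinatorial sum is controlled by the Bell number, $\sum_{k=1}^{j}N(2j,k)\le B_{2j}$, and the bound $B_{n}\le n!$ follows by a short induction using $B_{n+1}=\sum_{k=0}^{n}\binom{n}{k}B_{k}$ together with $\sum_{j\ge 0}1/j!<3$. Combining gives $\E[Z^{2j}]\le 2^{j}(2j)!\,\max(x^{j},1)$, and the target inequality $2^{j}(2j)!\le (4j)!/(2j)!$ is equivalent to $2^{j}\le \binom{4j}{2j}$, which is immediate from the standard estimate $\binom{2m}{m}\ge 2^{m}$ with $m=2j$.

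I do not foresee a serious obstacle: the Skellam/cumulant identification makes the moment computation painless (replacing a direct Fa\`a di Bruno calculation on $e^{-2x(1-\cos t)}$), and the remaining estimates are elementary. The only point that requires care is keeping the constants tight enough to land exactly at $(4j)!/(2j)!$, which is why I would keep the Bell-number bound rather than replacing $N(2j,k)$ by a cruder Stirling-number sum.
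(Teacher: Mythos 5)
Your proof is correct, and it takes a genuinely different route from the paper's. The paper works directly with the generating function $G_j(x,z)=z^{2j}e^{x(z+1/z)}$, differentiates it $2j$ times via Fa\`{a} di Bruno's formula applied to $H_j=\log G_j$, and then evaluates the resulting combinatorial sum exactly as $\sum_{k}|s(2j,k)|(2j)^k=(4j)!/\{2(2j)!\}$ using the generating function of the unsigned Stirling numbers of the first kind. You instead recognize $\{e^{-2x}I_n(2x)\}_{n\in\mathbb{Z}}$ as the Skellam$(x,x)$ law, read off that all even cumulants equal $2x$ and all odd ones vanish, and apply the set-partition moment--cumulant formula, so only partitions into even blocks survive and each contributes $(2x)^{\#\text{blocks}}$ with at most $j$ blocks. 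Your closing estimates $(2x)^k\le 2^j\max(x^j,1)$, $\sum_k N(2j,k)\le B_{2j}\le (2j)!$, and $2^j\le\binom{4j}{2j}$ are all valid (the moment--cumulant identity is justified since the moment generating function is entire), and they land within the stated constant even though they are cruder than the paper's exact evaluation. What your route buys is conceptual economy: the probabilistic identification replaces the somewhat delicate sign-handling and derivative bookkeeping in the paper's Fa\`{a} di Bruno computation, it explains at a glance why the weights sum to one, and it yields Proposition~\ref{prop:2nd.moment} for free as the variance $\kappa_2=2x$. What the paper's route buys is an exact closed form for the combinatorial constant rather than a chain of inequalities. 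Two trivial points to tidy up: the case $j=0$ should be noted separately (your partition sum is empty there, but $\E[Z^0]=1$ directly), and the base cases $n\le 2$ of the induction $B_n\le n!$ need to be checked by hand before the bound $3\cdot n!\le(n+1)!$ kicks in; neither affects the argument.
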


\begin{proof}
Let $G_j(x, z) = z^{2j}  e^{x(z + 1/z)}$, then by Proposition~\ref{prop:bessel.basic} (a), we have $G_j(x,z) = \sum_{n = -\infty}^{\infty} I_n(2x) z^{n + 2j}. $  When $j = 0$, we thus have $\sum_{n = -\infty}^{\infty} e^{-2x} I_n(2x) n^{2j} = 1$, leading to the statement of the proposition. For $j \geq 1$, we first take the $2j$th partial derivatives of $G_{2j}(x, z)$ and obtain 
\begin{align} 
\label{eq:dummy2.8.5}
\frac{\partial^{2j}G_j(x, z)}{\partial z^{2j}} = \sum_{n = -\infty}^{\infty} I_n(2x)  (n + 2j)_{2j}  z^{n},
\end{align}
where $(n + 2j)_{2j} = (n + 2j) \cdot (n + 2j - 1) \cdots \cdot (n + 1)$ is the descending factorial. It is easy to see that for $n \geq 0$, $(n + 2j)_{2j} \geq n^{2j}$; for $n \in [-2j, -1]$, $(n + 2j)_{2j} = 0$; for $n < -2j$, $(n + 2j)_{2j} = (-1)^{2j} (-n - 2j)\cdot \cdot \cdot (-n - 1) \geq 0$. Therefore, we have for $z > 0$, 
\begin{equation}
\label{eq:dummy3.8.5}
\frac{\partial^{2j}G_j(x,z)}{\partial z^{2j}} \geq \sum_{n = 0}^{\infty} I_n(2x) n^{2j} z^n = \sum_{n = 1}^{\infty} I_n(2x) n^{2j} z^n. 
\end{equation}
Let $F_j(x) = \left. \frac{\partial^{2j}G_j(x,z)}{\partial z^{2j}} \right|_{z = 1}$, then equation~\eqref{eq:dummy3.8.5} implies that 
\begin{equation}
\label{eq:dummy7.8.5}
\sum_{n = -\infty}^{\infty} e^{-2x} I_n(2x) n^{2j} = 2 \sum_{n = 1}^{\infty}  e^{-2x} I_n(2x) n^{2j} \leq 2e^{-2x} F_j(x). 
\end{equation}
To bound $F_j(x)$, consider $H_j(x, z) = \log\{G_j(x,z)\} = 2j \log z + x(z + z^{-1})$. By direct calculations, the $p$th order derivative of $H_j(x, z)$ is given by
\[  \frac{\partial^p H_j(x, z)}{\partial z^p} = \left\{
\begin{tabular}{cc}
$2j z^{-1} + x - x z^{-2},$ & if $p = 1,$ \\
$(-1)^p p!(-2jp^{-1} z + x)z^{-(p + 1)},$ & if $p \geq 2.$
\end{tabular} 
\right. \]
Applying the Fa\`{a} di Bruno's formula, we have 
\begin{align}
\label{eq:dummy4.8.5}
\frac{\partial^{2j}G_j(x,z)}{\partial z^{2j}} 
= \sum_{\mathcal{K}} \frac{(2j)!}{k_1!\cdots k_{2j}!} e^{H_j(x, z)} \cdot \prod_{i = 1}^{2j}\left\{\frac{\partial^i H_j(x, z)}{\partial z^i} \cdot \frac{1}{i!} \right\}^{k_i},  
\end{align}
where the sum is over the set $\mathcal{K} = \{(k_1, \ldots, k_{2j}): \sum_{i = 1}^{2j} i k_i = 2j, k_i \in \{0\} \cup \mathbb{Z}^+ \}.$ Plugging in the expression of ${\partial^p H_j(x, z)}/{\partial z^p}$ and $z = 1$, equation~\eqref{eq:dummy4.8.5} leads to
\begin{align}
F_j(x) & = \left. \frac{\partial^{2j}G_j(x,z)}{d z^{2j}} \right|_{z = 1} =  \sum_{\mathcal{K}} \frac{(2j)!}{k_1!\cdots k_{2j}!} e^{2x} \prod_{i = 2}^{2j}\left\{(-1)^i \left(-\frac{2j}{i} + x\right)\right\}^{k_i} (2j)^{k_1} \\
& =  \sum_{\mathcal{K}} \frac{(2j)!}{k_1!\cdots k_{2j}!} e^{2x} (-1)^{\sum_{i = 1}^{2j} i k_i} \prod_{i = 2}^{2j} \left\{\left(-\frac{2j}{i} + x\right)\right\}^{k_i} (-2j)^{k_1} \\
& =  \sum_{\mathcal{K}} \frac{(2j)!}{k_1!\cdots k_{2j}!} e^{2x} \prod_{i = 2}^{2j} \left\{\left(-\frac{2j}{i} + x\right)\right\}^{k_i} (-2j)^{k_1}, 
\end{align}
where the last step follows because $\sum_{i = 1}^{2j} i k_i = 2j$ which is even. 

Noting that $2j/i \geq 1$ for $i = 2, \ldots, 2j$, it is  easy to verify that $|-2j/i + x| \leq \max(x, 1)  2j/i$ for $x \geq 0$. Consequently, we have 
\begin{align}
|F_j(x)| & \leq \sum_{\mathcal{K}} \frac{(2j)!}{k_1!\cdots k_{2j}!} e^{2x} \prod_{i = 2}^{2j} \{\max(x, 1)\}^{k_i} \left\{\left(\frac{2j}{i}\right)\right\}^{k_i} (2j)^{k_1} \\
\label{eq:dummy6.8.5}
& \leq \sum_{\mathcal{K}} \frac{(2j)!}{k_1!\cdots k_{2j}!} e^{2x} \{\max(x, 1)\}^{\sum_{i = 2}^{2j} k_i} \prod_{i = 1}^{2j}  \left(\frac{2j}{i}\right)^{k_i}. 
\end{align}
Furthermore, for $(k_1, \ldots, k_{2j}) \in \mathcal{K}$, we have $2\sum_{i = 2}^{2j} k_i \leq \sum_{i = 2}^{2j} i k_i = 2j - k_1 \leq 2j$, and hence $\sum_{i = 2}^{2j} k_i \leq j$. Consequently, equation~\eqref{eq:dummy6.8.5} leads to 
\begin{equation}
|F_j(x)| \leq e^{2x}  \max(x^j, 1)  \sum_{\mathcal{K}} \frac{(2j)!}{k_1!\cdots k_{2j}!} \prod_{i = 1}^{2j}  \left(\frac{2j}{i}\right)^{k_i} =: e^{2x}  \max(x^j, 1) A_{2j}.
\end{equation}
To estimate $A_{2j}$, let $\mathcal{K}_{k} = \mathcal{K} \cap \{(k_1, \ldots, k_{2j}): \sum_{i = 1}^{2j} k_i = k\}$. We then have $A_{2j} = \sum_{k = 1}^{2j} (2j)^k \sum_{\mathcal{K}_k} {(2j)!}/{(k_1!\cdots k_{2j}!)} \prod_{i = 1}^{2j}  \left({1}/{i}\right)^{k_i} = \sum_{k = 1}^{2j} (2j)^k B_{2j, k}(0!, 1!, \ldots),$ where $B_{2j, k}(0!, 1!, \ldots)$ is the so-called Bell polynomials evaluated at $(0!, 1!, \ldots)$ and is equal to the unsigned Stirling number of the first kind $|s(2j, k)|$~\citep[Theorems A and B, page 133--134 in][]{Comtet:74}. Therefore, we have $A_{2j} = \sum_{k=1}^{2j}|s(2j, k)| (2j)^k,$ which is equal to $(2j)(2j+1)\cdots(2j + 2j - 1) = (4j)!/\{2(2j)!\}$ according to the generating function of $|s(2j, k)|$~\citep[equation (5f), page 213 in][]{Comtet:74}. Therefore, $|F_j(x)| \leq e^{2x}  \max(x^j, 1) (4j)!/\{2(2j)!\}.$ Combining equation~\eqref{eq:dummy7.8.5}, this yields the bound given in the statement of the proposition. 
\end{proof} 

\begin{proposition}
\label{prop:bessel.increasing}
The function $f_n(x) = \sqrt{x} e^{-x} I_n(x)$ is increasing in $x$ when $x \in B_n$, where $B_n = [0, n + 1/2]$ if $n = 0, 1$ and $B_n = [0, \infty)$ if $n \geq 2$. 
\end{proposition}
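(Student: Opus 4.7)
The plan is to differentiate $f_n(x) = \sqrt{x}\,e^{-x} I_n(x)$ explicitly and control the sign of the derivative. Using the recurrence $I_n'(x) = I_{n+1}(x) + (n/x)\,I_n(x)$, a short calculation gives
\begin{equation*}
f_n'(x) = \frac{e^{-x}}{2\sqrt{x}}\,\bigl[(2n+1-2x)\,I_n(x) + 2x\,I_{n+1}(x)\bigr].
\end{equation*}
When $x \le n + 1/2$, the factor $2n+1-2x$ is nonnegative, and Proposition~\ref{prop:bessel.basic}(c2) gives $I_n(x), I_{n+1}(x) > 0$. Hence both terms inside the bracket are nonnegative, so $f_n'(x) \ge 0$. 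This directly establishes the claim on $B_n$ in the cases $n = 0, 1$ and on the subinterval $[0, n+1/2]$ when $n \ge 2$.

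The remaining and nontrivial task is to extend the inequality to $x > n + 1/2$ when $n \ge 2$. I would pass to the logarithmic derivative by introducing $q_n(x) = I_n'(x)/I_n(x)$ and
\begin{equation*}
\delta_n(x) := (\log f_n)'(x) = q_n(x) - 1 + \frac{1}{2x},
\end{equation*}
so that, since $f_n(x) > 0$ for $x > 0$, it suffices to show $\delta_n(x) \ge 0$. Dividing the modified Bessel equation $x^2 I_n'' + x I_n' - (x^2 + n^2) I_n = 0$ by $I_n$ produces the Riccati identity $q_n' + q_n^2 + q_n/x = 1 + n^2/x^2$. Substituting $q_n = \delta_n + 1 - 1/(2x)$ and cancelling the $\delta_n/x$ cross-terms would yield the clean ODE
\begin{equation*}
\delta_n'(x) = \frac{4n^2-1}{4x^2} - \delta_n(x)\bigl(\delta_n(x) + 2\bigr).
\end{equation*}

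I would then close the argument by a first-crossing contradiction. Using $I_n(x) \sim (x/2)^n/n!$ at the origin for $n \ge 1$, one checks that $q_n(x) \sim n/x$ and hence $\delta_n(x) \sim (2n+1)/(2x) \to +\infty$ as $x \to 0^+$, so $\delta_n$ is strictly positive near the origin. If $\delta_n$ had a first zero at some $x_0 > 0$, then $\delta_n$ would be approaching $0$ from above, forcing $\delta_n'(x_0) \le 0$; however the displayed ODE evaluated at $\delta_n(x_0) = 0$ yields $\delta_n'(x_0) = (4n^2-1)/(4x_0^2) > 0$ for $n \ge 1$, a contradiction. Therefore $\delta_n(x) > 0$ for all $x > 0$ whenever $n \ge 2$, covering the regime $x > n + 1/2$ and completing the proof.

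The main obstacle I anticipate is arriving at the correctly signed Riccati equation for $\delta_n$: the specific shift $q_n \mapsto q_n - 1 + 1/(2x)$ is designed precisely to cancel the $\delta_n/x$ and constant terms and leave a positive coefficient $(4n^2-1)/(4x^2)$ for $n \ge 1$, which is what makes the first-zero contradiction work. The remaining verifications (positivity of $\delta_n$ near zero from the small-argument Bessel expansion and continuity of $\delta_n$ on $(0,\infty)$) are routine.
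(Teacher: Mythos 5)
Your proof is correct, and for the nontrivial regime it takes a genuinely different route from the paper. The easy part is essentially identical: both arguments reduce to the sign of $(\log f_n)'(x) = I_n'(x)/I_n(x) - 1 + 1/(2x)$ via the recurrence $I_n'(x) = I_{n+1}(x) + (n/x)I_n(x)$, and both dispose of $x \le n + 1/2$ by positivity of the Bessel ratio. For $x > n + 1/2$ with $n \ge 2$, however, the paper imports the explicit Amos lower bound $I_{n+1}(x)/I_n(x) \ge x/\bigl(n + 1/2 + \{x^2 + (n + 3/2)^2\}^{1/2}\bigr)$ and closes with an algebraic manipulation that reduces to $x^2 t^2 \ge (x - t)^2(2t + 1)$ for $t = n + 1/2 \ge 5/2$ --- which is precisely where the restriction $n \ge 2$ enters. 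You instead derive the Riccati equation $\delta_n' = (4n^2 - 1)/(4x^2) - \delta_n(\delta_n + 2)$ for $\delta_n = (\log f_n)'$ directly from the modified Bessel ODE and run a first-crossing contradiction; I checked the substitution and the sign of $\delta_n'$ at a putative first zero, and both are right. Your approach is self-contained (it needs only the differential equation and the small-argument behavior $I_n(x) \sim (x/2)^n/n!$, not Amos's ratio bounds), and it actually yields the stronger conclusion that $f_n$ is increasing on all of $(0, \infty)$ for every $n \ge 1$, since $4n^2 - 1 > 0$ already at $n = 1$; only $n = 0$ genuinely requires truncating to $[0, 1/2]$. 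The paper's version buys a shorter write-up given that Amos's inequalities are already cited elsewhere in the section (Proposition~\ref{prop:bessel.basic}), at the cost of an external estimate and a weaker range for $n = 1$.
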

\begin{proof}
For given $n \geq 0$, let $g_n(x) = \log f_n(x) = (\log x)/2 - x + \log I_n(x)$. Then $g'_n(x) = 1/(2x) - 1 + I_n'(x)/I_n(x)$. Let $r_n(x) = I_{n + 1}(x)/I_n(x)$, then $I_n'(x)/I_n(x) = r_n(x) + n/x$ (equation (8) in~\cite{Amos:74}). Therefore, the increasing property of $f_n(x)$ follows if we show that $1/(2x) - 1 + r_n(x) + n/x \geq 0$, or equivalently $r_n(x) \geq 1 - {(n + 1/2)}/{x}$. 

Since $r_n(x) \geq 0$ for $n \geq 0$ and $x \geq 0$, $f_n(x)$ is thus increasing in $x \in [0, n + 1/2]$ for any $n \geq 0$. When $n \geq 2$, we shall use the lower bound for $r_n(x)$ given in~\cite{Amos:74}, i.e. 
$
r_n(x) \geq x / (n + 1/2 + \{x^2 + (n + 3/2)^2\}^{1/2})$ when $x \geq 0.$
Let $t = n + 1/2$. Then it is sufficient to show that $x / (t + \{x^2 + (t + 1)^2\}^{1/2}) \geq 1 - t/x$ for $x > t \geq 5/2$. We rewrite this inequality as   
$ x^2 - (x - t)t \geq (x - t) \sqrt{x^2 + (t + 1)^2}$, which is simplified as $x^2 t^2 \geq (x - t)^2 (2t + 1)$ by algebra. It follows from the observation that when $x > t \geq 5/2$, we have $x > x - t > 0$ and $t^2 > 2t + 1$. Therefore,  $f_n(x)$ is increasing in $x$ if $n \geq 2$. 
\end{proof}

\begin{proposition}
\label{prop:2nd.moment}
For any $x \geq 0$, we have
$
\sum_{n = -\infty}^{\infty} e^{-2x} I_n(2x) n^2 = 2x. 
$
\end{proposition}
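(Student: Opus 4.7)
The plan is to evaluate the second factorial-type moment of $\{I_n(2x)\}$ by differentiating the generating function in Proposition~\ref{prop:bessel.basic}(a) and then specializing the argument. Concretely, I would substitute $z = e^{i\theta}$ into
\begin{equation*}
G(x,z) = e^{x(z+1/z)} = \sum_{n=-\infty}^{\infty} I_n(2x)\, z^n,
\end{equation*}
which gives the Jacobi--Anger type identity $e^{2x\cos\theta} = \sum_{n=-\infty}^{\infty} I_n(2x) e^{in\theta}$, valid for all real $\theta$.

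Next I would differentiate this identity twice in $\theta$ and evaluate at $\theta = 0$. Termwise differentiation is legitimate because $I_n(2x) \leq I_0(2x)(2x)^{|n|}/|n|!$ by Proposition~\ref{prop:bessel.basic}(c3), so $\sum_n n^2 I_n(2x)$ converges absolutely for every fixed $x \geq 0$ and the differentiated series converges uniformly on compact sets. On the one hand,
\begin{equation*}
\frac{\partial^2}{\partial \theta^2}\sum_n I_n(2x)e^{in\theta} = -\sum_{n=-\infty}^{\infty} n^2 I_n(2x)\, e^{in\theta}.
\end{equation*}
On the other hand, by direct computation
\begin{equation*}
\frac{\partial^2}{\partial \theta^2} e^{2x\cos\theta} = \bigl(4x^2\sin^2\theta - 2x\cos\theta\bigr)\, e^{2x\cos\theta}.
\end{equation*}

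Setting $\theta = 0$ eliminates the $\sin^2\theta$ contribution and yields $-\sum_n n^2 I_n(2x) = -2x\, e^{2x}$, so that $\sum_n e^{-2x} I_n(2x) n^2 = 2x$, which is the desired identity. The main (and only real) obstacle is justifying termwise differentiation, which is disposed of cleanly by the bound in Proposition~\ref{prop:bessel.basic}(c3). Alternatively, one could iterate the operator $z\partial_z$ on the generating function and set $z = 1$, obtaining $[z\partial_z]^2 G = G \cdot [x^2(z-z^{-1})^2 + x(z+z^{-1})]$, which at $z=1$ equals $2x\, e^{2x}$; this is essentially the same argument in algebraic dress.
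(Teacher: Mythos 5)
Your proof is correct and takes essentially the same route as the paper: both extract $\sum_n n^2 I_n(2x)$ by differentiating the generating function $e^{x(z+1/z)}=\sum_n I_n(2x)z^n$ twice and evaluating at the point corresponding to $z=1$. Your substitution $z=e^{i\theta}$ with evaluation at $\theta=0$ is in fact slightly cleaner, since it brings down $-n^2$ directly and avoids the linear combination of $z$-derivatives of $z^2G(x,z)$ that the paper uses to isolate the $n^2$ coefficient, and your appeal to Proposition~\ref{prop:bessel.basic}(c3) adequately justifies the termwise differentiation.
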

\begin{proof}
Let $G_2(x, z) = z^2 e^{x ( z + 1/z)}$. A direct calculation leads to the relations $\partial G_2(x, z)/\partial z = e^{x ( z + z^{-1})}(x z^2 - x + 2z)$ and $\partial^2 G_2(x, z) / \partial z^2 = e^{x(z + z^{-1})}\{(x - xz^{-2})(xz^2 - x + 2z) + 2xz + 2\}.$ On the other hand, by Proposition~\ref{prop:bessel.basic} (a), we have $G_2(x,z) = \sum_{n = -\infty}^{\infty} I_n(2x) z^{n + 2}.$ We take derivatives at the right hand side term by term and obtain that 
\begin{equation}
\sum_{n = -\infty}^{\infty} e^{-2x} I_n(2x) n^2 = \frac{\partial^2 G_2(x, 1)}{\partial z^2} - 3\frac{\partial G_2(x, 1)}{\partial z} + 4G_2(x, 1)=2x,
\end{equation} by the expression of $\partial G_2(x, z) / \partial z$, $\partial^2 G_2(x, z) / \partial z^2$ at $z = 1$. 
\end{proof}

\appendix
%

\section*{Acknowledgements}
We thank Professor Aad van der Vaart for many helpful discussions and pointing out important references. The work was conducted when the first author was a graduate student at North Carolina State University. 


\bibliography{bib/Meng}

\begin{thebibliography}{48}

\bibitem[\protect\citeauthoryear{Amos}{1974}]{Amos:74}
\begin{barticle}[author]
\bauthor{\bsnm{Amos},~\bfnm{DE}\binits{D.}}
(\byear{1974}).
\btitle{Computation of modified {B}essel functions and their ratios}.
\bjournal{Mathematics of Computation}
\bvolume{28}
\bpages{239--251}.
\end{barticle}
\endbibitem

\bibitem[\protect\citeauthoryear{Arbel, Gayraud and Rousseau}{2013}]{Arbel+:13}
\begin{barticle}[author]
\bauthor{\bsnm{Arbel},~\bfnm{Julyan}\binits{J.}},
  \bauthor{\bsnm{Gayraud},~\bfnm{Ghislaine}\binits{G.}} \AND
  \bauthor{\bsnm{Rousseau},~\bfnm{Judith}\binits{J.}}
(\byear{2013}).
\btitle{Bayesian optimal adaptive estimation using a sieve prior}.
\bjournal{Scandinavian Journal of Statistics}
\bvolume{40}
\bpages{549--570}.
\end{barticle}
\endbibitem

\bibitem[\protect\citeauthoryear{Banerjee and
  Gelfand}{2006}]{Banerjee+Gelfand:06}
\begin{barticle}[author]
\bauthor{\bsnm{Banerjee},~\bfnm{Sudipto}\binits{S.}} \AND
  \bauthor{\bsnm{Gelfand},~\bfnm{Alan~E.}\binits{A.~E.}}
(\byear{2006}).
\btitle{Bayesian wombling: curvilinear gradient assessment under spatial
  process models}.
\bjournal{Journal of the American Statistical Association}
\bvolume{101}
\bpages{1487--1501}.
\end{barticle}
\endbibitem

\bibitem[\protect\citeauthoryear{Basu}{2002}]{Basu:02}
\begin{barticle}[author]
\bauthor{\bsnm{Basu},~\bfnm{Mitra}\binits{M.}}
(\byear{2002}).
\btitle{Gaussian-based edge-detection methods-a survey}.
\bjournal{IEEE Transactions on Systems, Man, and Cybernetics, Part C}
\bvolume{32}
\bpages{252--260}.
\end{barticle}
\endbibitem

\bibitem[\protect\citeauthoryear{Bhardwaj and Mittal}{2012}]{Bha+Mit:12}
\begin{barticle}[author]
\bauthor{\bsnm{Bhardwaj},~\bfnm{Saket}\binits{S.}} \AND
  \bauthor{\bsnm{Mittal},~\bfnm{Ajay}\binits{A.}}
(\byear{2012}).
\btitle{A survey on various edge detector techniques}.
\bjournal{Procedia Technology}
\bvolume{4}
\bpages{220--226}.
\end{barticle}
\endbibitem

\bibitem[\protect\citeauthoryear{Carlstein and
  Krishnamoorthy}{1992}]{Car+Kri:92}
\begin{barticle}[author]
\bauthor{\bsnm{Carlstein},~\bfnm{E}\binits{E.}} \AND
  \bauthor{\bsnm{Krishnamoorthy},~\bfnm{C}\binits{C.}}
(\byear{1992}).
\btitle{Boundary estimation}.
\bjournal{Journal of the American Statistical Association}
\bvolume{87}
\bpages{430--438}.
\end{barticle}
\endbibitem

\bibitem[\protect\citeauthoryear{Castillo}{2012}]{Castillo}
\begin{barticle}[author]
\bauthor{\bsnm{Castillo},~\bfnm{I.}\binits{I.}}
(\byear{2012}).
\btitle{A semiparametric {B}ernstein--von {M}ises theorem for {G}aussian
  process priors}.
\bjournal{Probab. Theory Related Fields}
\bvolume{152}
\bpages{53--99}.
\bdoi{10.1007/s00440-010-0316-5}
\bmrnumber{2875753}
\end{barticle}
\endbibitem

\bibitem[\protect\citeauthoryear{Castillo, Kerkyacharian and
  Picard}{2014}]{Castillo+:14}
\begin{barticle}[author]
\bauthor{\bsnm{Castillo},~\bfnm{Isma{\"e}l}\binits{I.}},
  \bauthor{\bsnm{Kerkyacharian},~\bfnm{G{\'e}rard}\binits{G.}} \AND
  \bauthor{\bsnm{Picard},~\bfnm{Dominique}\binits{D.}}
(\byear{2014}).
\btitle{Thomas Bayes’ walk on manifolds}.
\bjournal{Probability Theory and Related Fields}
\bvolume{158}
\bpages{665--710}.
\end{barticle}
\endbibitem

\bibitem[\protect\citeauthoryear{Chen and Gupta}{2011}]{Chen+Gupta:11}
\begin{bbook}[author]
\bauthor{\bsnm{Chen},~\bfnm{Jie}\binits{J.}} \AND
  \bauthor{\bsnm{Gupta},~\bfnm{Arjun~K}\binits{A.~K.}}
(\byear{2011}).
\btitle{Parametric Statistical Change Point Analysis: With Applications to
  Genetics, Medicine, and Finance}.
\bpublisher{Springer Science \& Business Media}.
\end{bbook}
\endbibitem

\bibitem[\protect\citeauthoryear{Comtet}{1974}]{Comtet:74}
\begin{bbook}[author]
\bauthor{\bsnm{Comtet},~\bfnm{Louis}\binits{L.}}
(\byear{1974}).
\btitle{Advanced Combinatorics: The Art of Finite and Infinite Expansions}.
\bpublisher{D. Reidel Publishing Company}, \baddress{Dordrecht, Holland}.
\end{bbook}
\endbibitem

\bibitem[\protect\citeauthoryear{Dai and Xu}{2013}]{Dai+Xu:13}
\begin{bbook}[author]
\bauthor{\bsnm{Dai},~\bfnm{Feng}\binits{F.}} \AND
  \bauthor{\bsnm{Xu},~\bfnm{Yuan}\binits{Y.}}
(\byear{2013}).
\btitle{Approximation Theory and Harmonic Analysis on Spheres and Balls}.
\bpublisher{Springer}.
\end{bbook}
\endbibitem

\bibitem[\protect\citeauthoryear{Donoho}{1999}]{Donoho:99}
\begin{barticle}[author]
\bauthor{\bsnm{Donoho},~\bfnm{David~L}\binits{D.~L.}}
(\byear{1999}).
\btitle{Wedgelets: Nearly minimax estimation of edges}.
\bjournal{The Annals of Statistics}
\bvolume{27}
\bpages{859--897}.
\end{barticle}
\endbibitem

\bibitem[\protect\citeauthoryear{Dudley}{1974}]{Dudley:74}
\begin{barticle}[author]
\bauthor{\bsnm{Dudley},~\bfnm{Richard~M}\binits{R.~M.}}
(\byear{1974}).
\btitle{Metric entropy of some classes of sets with differentiable boundaries}.
\bjournal{Journal of Approximation Theory}
\bvolume{10}
\bpages{227--236}.
\end{barticle}
\endbibitem

\bibitem[\protect\citeauthoryear{Ennis}{2005}]{Ennis:05}
\begin{bmisc}[author]
\bauthor{\bsnm{Ennis},~\bfnm{Daniel}\binits{D.}}
(\byear{2005}).
\btitle{Spherical Harmonics}.
\bhowpublished{\url{http://www.mathworks.com/matlabcentral/fileexchange/8638-spherical-harmonics}}.
\bnote{MATLAB Central File Exchange.}
\end{bmisc}
\endbibitem

\bibitem[\protect\citeauthoryear{Fitzpatrick et~al.}{2010}]{Fit+:10}
\begin{barticle}[author]
\bauthor{\bsnm{Fitzpatrick},~\bfnm{Matthew~C.}\binits{M.~C.}},
  \bauthor{\bsnm{Preisser},~\bfnm{Evan~L.}\binits{E.~L.}},
  \bauthor{\bsnm{Porter},~\bfnm{Adam}\binits{A.}},
  \bauthor{\bsnm{Elkinton},~\bfnm{Joseph}\binits{J.}},
  \bauthor{\bsnm{Waller},~\bfnm{Lance~A.}\binits{L.~A.}},
  \bauthor{\bsnm{Carlin},~\bfnm{Bradley~P.}\binits{B.~P.}} \AND
  \bauthor{\bsnm{Ellison},~\bfnm{Aaron~M.}\binits{A.~M.}}
(\byear{2010}).
\btitle{Ecological boundary detection using Bayesian areal wombling}.
\bjournal{Ecology}
\bvolume{91}
\bpages{3448--3455}.
\end{barticle}
\endbibitem

\bibitem[\protect\citeauthoryear{Geman and Geman}{1984}]{Geman+Geman:84}
\begin{barticle}[author]
\bauthor{\bsnm{Geman},~\bfnm{Stuart}\binits{S.}} \AND
  \bauthor{\bsnm{Geman},~\bfnm{Donald}\binits{D.}}
(\byear{1984}).
\btitle{Stochastic relaxation, {G}ibbs distributions, and the {B}ayesian
  restoration of images}.
\bjournal{Pattern Analysis and Machine Intelligence, IEEE Transactions on}
\bvolume{6}
\bpages{721--741}.
\end{barticle}
\endbibitem

\bibitem[\protect\citeauthoryear{Geman and Geman}{1993}]{Geman+Geman:93}
\begin{barticle}[author]
\bauthor{\bsnm{Geman},~\bfnm{Stuart}\binits{S.}} \AND
  \bauthor{\bsnm{Geman},~\bfnm{Donald}\binits{D.}}
(\byear{1993}).
\btitle{Stochastic relaxation, {G}ibbs distributions and the {B}ayesian
  restoration of images}.
\bjournal{Journal of Applied Statistics}
\bvolume{20}
\bpages{25-62}.
\bdoi{10.1080/02664769300000058}
\end{barticle}
\endbibitem

\bibitem[\protect\citeauthoryear{Ghosal and van~der
  Vaart}{2007}]{Ghosal+van:07}
\begin{barticle}[author]
\bauthor{\bsnm{Ghosal},~\bfnm{Subhashis}\binits{S.}} \AND
  \bauthor{\bparticle{van~der} \bsnm{Vaart},~\bfnm{Aad}\binits{A.}}
(\byear{2007}).
\btitle{Convergence rates of posterior distributions for noniid observations}.
\bjournal{The Annals of Statistics}
\bvolume{35}
\bpages{192--223}.
\end{barticle}
\endbibitem

\bibitem[\protect\citeauthoryear{Gu, Pati and Dunson}{2014}]{Gu+:14}
\begin{barticle}[author]
\bauthor{\bsnm{Gu},~\bfnm{Kelvin}\binits{K.}},
  \bauthor{\bsnm{Pati},~\bfnm{Debdeep}\binits{D.}} \AND
  \bauthor{\bsnm{Dunson},~\bfnm{David~B}\binits{D.~B.}}
(\byear{2014}).
\btitle{Bayesian multiscale modeling of closed curves in point clouds}.
\bjournal{Journal of the American Statistical Association}
\bvolume{109}
\bpages{1481--1494}.
\end{barticle}
\endbibitem

\bibitem[\protect\citeauthoryear{Hall, Peng and Rau}{2001}]{Hall+:01}
\begin{barticle}[author]
\bauthor{\bsnm{Hall},~\bfnm{Peter}\binits{P.}},
  \bauthor{\bsnm{Peng},~\bfnm{Liang}\binits{L.}} \AND
  \bauthor{\bsnm{Rau},~\bfnm{Christian}\binits{C.}}
(\byear{2001}).
\btitle{Local likelihood tracking of fault lines and boundaries}.
\bjournal{Journal of the Royal Statistical Society: Series B (Statistical
  Methodology)}
\bvolume{63}
\bpages{569--582}.
\end{barticle}
\endbibitem

\bibitem[\protect\citeauthoryear{Hastie and Stuetzle}{1989}]{Hastie+Stu:89}
\begin{barticle}[author]
\bauthor{\bsnm{Hastie},~\bfnm{Trevor}\binits{T.}} \AND
  \bauthor{\bsnm{Stuetzle},~\bfnm{Werner}\binits{W.}}
(\byear{1989}).
\btitle{Principal curves}.
\bjournal{Journal of the American Statistical Association}
\bvolume{84}
\bpages{502--516}.
\end{barticle}
\endbibitem

\bibitem[\protect\citeauthoryear{Jackson}{1930}]{Jackson:30}
\begin{bbook}[author]
\bauthor{\bsnm{Jackson},~\bfnm{Dunham}\binits{D.}}
(\byear{1930}).
\btitle{The Theory of Approximation}
\bvolume{11}.
\bpublisher{The American Mathematical Society}.
\end{bbook}
\endbibitem

\bibitem[\protect\citeauthoryear{Jeffrey and
  Zwillinger}{2007}]{Jeffrey+Zwillinger:07}
\begin{bbook}[author]
\bauthor{\bsnm{Jeffrey},~\bfnm{A.}\binits{A.}} \AND
  \bauthor{\bsnm{Zwillinger},~\bfnm{D.}\binits{D.}}
(\byear{2007}).
\btitle{Table of Integrals, Series, and Products}.
\bseries{Table of Integrals, Series, and Products Series}.
\bpublisher{Elsevier Science}.
\end{bbook}
\endbibitem

\bibitem[\protect\citeauthoryear{Killick and Eckley}{2011}]{Kil+Eck:11}
\begin{barticle}[author]
\bauthor{\bsnm{Killick},~\bfnm{Rebecca}\binits{R.}} \AND
  \bauthor{\bsnm{Eckley},~\bfnm{Idris~A}\binits{I.~A.}}
(\byear{2011}).
\btitle{Changepoint: an {R} package for changepoint analysis}.
\bjournal{R package version 0.6, URL http://CRAN. R-project. org/package=
  changepoint}.
\end{barticle}
\endbibitem

\bibitem[\protect\citeauthoryear{Killick, Fearnhead and Eckley}{2012}]{Kil+:12}
\begin{barticle}[author]
\bauthor{\bsnm{Killick},~\bfnm{Rebecca}\binits{R.}},
  \bauthor{\bsnm{Fearnhead},~\bfnm{Paul}\binits{P.}} \AND
  \bauthor{\bsnm{Eckley},~\bfnm{IA}\binits{I.}}
(\byear{2012}).
\btitle{Optimal detection of changepoints with a linear computational cost}.
\bjournal{Journal of the American Statistical Association}
\bvolume{107}
\bpages{1590--1598}.
\end{barticle}
\endbibitem

\bibitem[\protect\citeauthoryear{Korostelev and Tsybakov}{1993}]{Kor+Tsy:93}
\begin{bbook}[author]
\bauthor{\bsnm{Korostelev},~\bfnm{Aleksandr~Petrovich}\binits{A.~P.}} \AND
  \bauthor{\bsnm{Tsybakov},~\bfnm{Alexandre~B}\binits{A.~B.}}
(\byear{1993}).
\btitle{Minimax Theory of Image Reconstruction}.
\bseries{{\it Lecture Notes in Statistics}, 82}.
\bpublisher{Springer}, \baddress{New York}.
\end{bbook}
\endbibitem

\bibitem[\protect\citeauthoryear{Kuelbs and Li}{1993}]{Kue+Li:93}
\begin{barticle}[author]
\bauthor{\bsnm{Kuelbs},~\bfnm{James}\binits{J.}} \AND
  \bauthor{\bsnm{Li},~\bfnm{Wenbo~V}\binits{W.~V.}}
(\byear{1993}).
\btitle{Metric entropy and the small ball problem for Gaussian measures}.
\bjournal{Journal of Functional Analysis}
\bvolume{116}
\bpages{133--157}.
\end{barticle}
\endbibitem

\bibitem[\protect\citeauthoryear{Li and Linde}{1999}]{Li+Linde:99}
\begin{barticle}[author]
\bauthor{\bsnm{Li},~\bfnm{Wenbo~V}\binits{W.~V.}} \AND
  \bauthor{\bsnm{Linde},~\bfnm{Werner}\binits{W.}}
(\byear{1999}).
\btitle{Approximation, metric entropy and small ball estimates for Gaussian
  measures}.
\bjournal{The Annals of Probability}
\bvolume{27}
\bpages{1556--1578}.
\end{barticle}
\endbibitem

\bibitem[\protect\citeauthoryear{Lu and Carlin}{2005}]{Lu+Car:05}
\begin{barticle}[author]
\bauthor{\bsnm{Lu},~\bfnm{Haolan}\binits{H.}} \AND
  \bauthor{\bsnm{Carlin},~\bfnm{Bradley~P.}\binits{B.~P.}}
(\byear{2005}).
\btitle{Bayesian areal wombling for geographical boundary analysis}.
\bjournal{Geographical Analysis}
\bvolume{37}
\bpages{265-285}.
\end{barticle}
\endbibitem

\bibitem[\protect\citeauthoryear{MacKay}{1998}]{MacKay:98}
\begin{barticle}[author]
\bauthor{\bsnm{MacKay},~\bfnm{David~JC}\binits{D.~J.}}
(\byear{1998}).
\btitle{Introduction to {G}aussian processes}.
\bjournal{NATO ASI Series F Computer and Systems Sciences}
\bvolume{168}
\bpages{133--166}.
\end{barticle}
\endbibitem

\bibitem[\protect\citeauthoryear{Mammen and
  Tsybakov}{1995}]{Mammen+Tsybakov:95}
\begin{barticle}[author]
\bauthor{\bsnm{Mammen},~\bfnm{E}\binits{E.}} \AND
  \bauthor{\bsnm{Tsybakov},~\bfnm{AB}\binits{A.}}
(\byear{1995}).
\btitle{Asymptotical minimax recovery of sets with smooth boundaries}.
\bjournal{The Annals of Statistics}
\bvolume{23}
\bpages{502--524}.
\end{barticle}
\endbibitem

\bibitem[\protect\citeauthoryear{M\"{u}ller and Song}{1994}]{Muller+Song:94}
\begin{barticle}[author]
\bauthor{\bsnm{M\"{u}ller},~\bfnm{Hans-Georg}\binits{H.-G.}} \AND
  \bauthor{\bsnm{Song},~\bfnm{Kai-Sheng}\binits{K.-S.}}
(\byear{1994}).
\btitle{Maximin estimation of multidimensional boundaries}.
\bjournal{Journal of Multivariate Analysis}
\bvolume{50}
\bpages{265--281}.
\end{barticle}
\endbibitem

\bibitem[\protect\citeauthoryear{Neal}{2003}]{Neal:03}
\begin{barticle}[author]
\bauthor{\bsnm{Neal},~\bfnm{Radford~M}\binits{R.~M.}}
(\byear{2003}).
\btitle{Slice sampling}.
\bjournal{The Annals of Statistics}
\bvolume{31}
\bpages{705--767}.
\end{barticle}
\endbibitem

\bibitem[\protect\citeauthoryear{Polzehl and Spokoiny}{2003}]{Pol+Spo:03}
\begin{barticle}[author]
\bauthor{\bsnm{Polzehl},~\bfnm{J{\"o}rg}\binits{J.}} \AND
  \bauthor{\bsnm{Spokoiny},~\bfnm{Vladimir}\binits{V.}}
(\byear{2003}).
\btitle{Image denoising: pointwise adaptive approach}.
\bjournal{The Annals of Statistics}
\bvolume{31}
\bpages{30--57}.
\end{barticle}
\endbibitem

\bibitem[\protect\citeauthoryear{Qiu}{2005}]{Qiu:05}
\begin{bbook}[author]
\bauthor{\bsnm{Qiu},~\bfnm{Peihua}\binits{P.}}
(\byear{2005}).
\btitle{Image Processing and Jump Regression Analysis}
\bvolume{599}.
\bpublisher{John Wiley \& Sons}.
\end{bbook}
\endbibitem

\bibitem[\protect\citeauthoryear{Qiu}{2007}]{Qiu:07}
\begin{barticle}[author]
\bauthor{\bsnm{Qiu},~\bfnm{Peihua}\binits{P.}}
(\byear{2007}).
\btitle{Jump surface estimation, edge detection, and image restoration}.
\bjournal{Journal of the American Statistical Association}
\bvolume{102}
\bpages{745--756}.
\end{barticle}
\endbibitem

\bibitem[\protect\citeauthoryear{Qiu and Sun}{2007}]{Qiu+Sun:07}
\begin{barticle}[author]
\bauthor{\bsnm{Qiu},~\bfnm{Peihua}\binits{P.}} \AND
  \bauthor{\bsnm{Sun},~\bfnm{Jingran}\binits{J.}}
(\byear{2007}).
\btitle{Local smoothing image segmentation for spotted microarray images}.
\bjournal{Journal of the American Statistical Association}
\bvolume{102}
\bpages{1129--1144}.
\end{barticle}
\endbibitem

\bibitem[\protect\citeauthoryear{Qiu and Sun}{2009}]{Qiu+Sun:09}
\begin{barticle}[author]
\bauthor{\bsnm{Qiu},~\bfnm{Peihua}\binits{P.}} \AND
  \bauthor{\bsnm{Sun},~\bfnm{Jingran}\binits{J.}}
(\byear{2009}).
\btitle{Using conventional edge detectors and postsmoothing for segmentation of
  spotted microarray images}.
\bjournal{Journal of Computational and Graphical Statistics}
\bvolume{18}
\bpages{147--164}.
\end{barticle}
\endbibitem

\bibitem[\protect\citeauthoryear{Qiu and Yandell}{1997}]{Qiu+Yan:97}
\begin{barticle}[author]
\bauthor{\bsnm{Qiu},~\bfnm{Peihua}\binits{P.}} \AND
  \bauthor{\bsnm{Yandell},~\bfnm{Brian}\binits{B.}}
(\byear{1997}).
\btitle{Jump detection in regression surfaces}.
\bjournal{Journal of Computational and Graphical Statistics}
\bvolume{6}
\bpages{332--354}.
\end{barticle}
\endbibitem

\bibitem[\protect\citeauthoryear{Rasmussen and
  Williams}{2006}]{Rasmussen+Williams:06}
\begin{bbook}[author]
\bauthor{\bsnm{Rasmussen},~\bfnm{Carl~Edward}\binits{C.~E.}} \AND
  \bauthor{\bsnm{Williams},~\bfnm{Christopher~KI}\binits{C.~K.}}
(\byear{2006}).
\btitle{Gaussian Process for Machine Learning.}
\bseries{Adaptive Computation and Machine Learning}.
\bpublisher{the MIT Press}.
\end{bbook}
\endbibitem

\bibitem[\protect\citeauthoryear{Rudemo and Stryhn}{1994}]{Rud+Str:94b}
\begin{barticle}[author]
\bauthor{\bsnm{Rudemo},~\bfnm{Mats}\binits{M.}} \AND
  \bauthor{\bsnm{Stryhn},~\bfnm{Henrik}\binits{H.}}
(\byear{1994}).
\btitle{Approximating the distribution of maximum likelihood contour estimators
  in two-region images}.
\bjournal{Scandinavian Journal of Statistics}
\bvolume{21}
\bpages{41--55}.
\end{barticle}
\endbibitem

\bibitem[\protect\citeauthoryear{Shen and Ghosal}{2015}]{Shen+Ghosal:14}
\begin{barticle}[author]
\bauthor{\bsnm{Shen},~\bfnm{Weining}\binits{W.}} \AND
  \bauthor{\bsnm{Ghosal},~\bfnm{Subhashis}\binits{S.}}
(\byear{2015}).
\btitle{Adaptive {B}ayesian procedures using random series prior}.
\bjournal{Scandinavian Journal of Statististics}
\bvolume{42}
\bpages{1194--1213}.
\end{barticle}
\endbibitem

\bibitem[\protect\citeauthoryear{Terras}{2013}]{Terras:13}
\begin{bbook}[author]
\bauthor{\bsnm{Terras},~\bfnm{Audrey}\binits{A.}}
(\byear{2013}).
\btitle{Harmonic Analysis on Symmetric Spaces{---}Euclidean Space, the Sphere,
  and the Poincar{\'e} Upper Half-Plane}.
\bpublisher{Springer}.
\end{bbook}
\endbibitem

\bibitem[\protect\citeauthoryear{van~der Vaart and van
  Zanten}{2007}]{van+van:07}
\begin{barticle}[author]
\bauthor{\bparticle{van~der} \bsnm{Vaart},~\bfnm{Aad}\binits{A.}} \AND
  \bauthor{\bparticle{van} \bsnm{Zanten},~\bfnm{Harry}\binits{H.}}
(\byear{2007}).
\btitle{Bayesian inference with rescaled {G}aussian process priors}.
\bjournal{Electronic Journal of Statistics}
\bvolume{1}
\bpages{433--448}.
\end{barticle}
\endbibitem

\bibitem[\protect\citeauthoryear{van~der Vaart and van
  Zanten}{2008}]{van+van:08}
\begin{barticle}[author]
\bauthor{\bparticle{van~der} \bsnm{Vaart},~\bfnm{Aad~W}\binits{A.~W.}} \AND
  \bauthor{\bparticle{van} \bsnm{Zanten},~\bfnm{J~Harry}\binits{J.~H.}}
(\byear{2008}).
\btitle{Rates of contraction of posterior distributions based on {G}aussian
  process priors}.
\bjournal{The Annals of Statistics}
\bvolume{36}
\bpages{1435--1463}.
\end{barticle}
\endbibitem

\bibitem[\protect\citeauthoryear{van~der Vaart and van
  Zanten}{2009}]{van+van:09}
\begin{barticle}[author]
\bauthor{\bparticle{van~der} \bsnm{Vaart},~\bfnm{Aad~W}\binits{A.~W.}} \AND
  \bauthor{\bparticle{van} \bsnm{Zanten},~\bfnm{J~Harry}\binits{J.~H.}}
(\byear{2009}).
\btitle{Adaptive {B}ayesian estimation using a {G}aussian random field with
  inverse {G}amma bandwidth}.
\bjournal{The Annals of Statistics}
\bvolume{37}
\bpages{2655--2675}.
\end{barticle}
\endbibitem

\bibitem[\protect\citeauthoryear{Waller and Gotway}{2004}]{Waller+Gotway:04}
\begin{bbook}[author]
\bauthor{\bsnm{Waller},~\bfnm{Lance~A}\binits{L.~A.}} \AND
  \bauthor{\bsnm{Gotway},~\bfnm{Carol~A}\binits{C.~A.}}
(\byear{2004}).
\btitle{Applied Spatial Statistics for Public Health Data}
\bvolume{368}.
\bpublisher{John Wiley \& Sons}.
\end{bbook}
\endbibitem

\bibitem[\protect\citeauthoryear{Watson}{1995}]{Watson:95}
\begin{bbook}[author]
\bauthor{\bsnm{Watson},~\bfnm{George~Neville}\binits{G.~N.}}
(\byear{1995}).
\btitle{A Treatise on the Theory of Bessel Functions}.
\bpublisher{Cambridge University Press}.
\end{bbook}
\endbibitem

\end{thebibliography}

%
%
%
%
%
%

\end{document}